\let\vec\boldsymbol
\newcommand{\Cred}{C_{\rm red}}
\newcommand{\Cgoal}{C_{\rm go}}
\newcommand{\Cest}{C_{\rm est}}
\newcommand{\rhochild}{\rho_{\rm child}}
\newcommand{\Cpatch}{C_{\rm patch}}
\newcommand{\Csize}{C_{\rm size}}
\newcommand{\Cbdd}{C_{\rm bdd}}
\newcommand{\Cdeff}{C_{\rm deff}}
\newcommand{\rhodeff}{\rho_{\rm deff}}
\newcommand{\coarse}{H}
\newcommand{\fine}{h}
\newcommand{\normalvec}{\boldsymbol{n}}
\DeclareMathOperator{\osc}{osc}
\newcommand{\headlineRd}{\texorpdfstring{$\R^d$}{R\^d}}
\newcommand{\qbox}[1]{\quad\mbox{#1}}
\newcommand{\qqbox}[1]{\quad\mbox{#1}\quad}
\newcommand{\Rge}{\mathbb{R}_{\ge0}}
\newcommand{\TTgood}{\TT^{+}}
\newcommand{\TTbad}{\TT^{-}}
\newcommand{\Omegabad}{\Omega^{-}}
\newcommand{\TTugly}{\TT^{\star}}
\newcommand{\TTlk}{\TT_{\ell_k}}
\definecolor{color1}{HTML}{d62728}
\definecolor{color2}{HTML}{1f77b4}
\definecolor{color3}{HTML}{2ca02c}
\definecolor{color4}{HTML}{ff7f0e}
\newtheorem{markingstrategy}{Marking Strategy}
\title{Plain convergence of goal-oriented adaptive FEM}
\author{Valentin Helml}
\author{Michael Innerberger}
\author{Dirk Praetorius}
\address{TU Wien, Institute of Analysis and Scientific Computing, Wiedner Hauptstr. 8--10/E101/4, 1040 Vienna, Austria}
\email{valentin.helml@asc.tuwien.ac.at}
\email{michael.innerberger@asc.tuwien.ac.at \quad \textrm{(corresponding author)}}
\email{dirk.praetorius@asc.tuwien.ac.at}
\keywords{Adaptivity, goal-oriented algorithm, convergence, finite element method}
\thanks{The authors thankfully acknowledge support by the Austrian Science Fund (FWF) through the doctoral school \emph{Dissipation and dispersion in nonlinear PDEs} (grant W1245), the SFB \emph{Taming complexity in partial differential systems} (grant SFB F65) and the standalone project \emph{Computational nonlinear PDEs} (grant P33216).
Furthermore, we thank one of the anonymous referees for several valuable comments and input which improved content and presentation of this manuscript.}
\begin{document}

\maketitle

\begin{abstract}
    We discuss goal-oriented adaptivity in the frame of conforming finite element methods and plain convergence of the related \textsl{a posteriori} error estimator for different general marking strategies.
    We present an abstract analysis for two different settings.
    First, we consider problems where a local discrete efficiency estimate holds.
    Second, we show plain convergence in a setting that relies only on structural properties of the error estimators, namely stability on non-refined elements as well as reduction on refined elements.
	In particular, the second setting does not require reliability and efficiency estimates.
	Numerical experiments underline our theoretical findings.
\end{abstract}

%!TEX root = main.tex

%%%%%%%%%%%%%%%%%%%%%%%%%%%%%%%%%%%%%%%%%%%%%%%%%%%%%%%%%%%%%%%%%%%%%%%%%%%%%%%%%%%
%%%%%%%%%%%%%%%%%%%%%%%%%%%%%%%%%%%%%%%%%%%%%%%%%%%%%%%%%%%%%%%%%%%%%%%%%%%%%%%%%%%
\section{Introduction}
%%%%%%%%%%%%%%%%%%%%%%%%%%%%%%%%%%%%%%%%%%%%%%%%%%%%%%%%%%%%%%%%%%%%%%%%%%%%%%%%%%%
%%%%%%%%%%%%%%%%%%%%%%%%%%%%%%%%%%%%%%%%%%%%%%%%%%%%%%%%%%%%%%%%%%%%%%%%%%%%%%%%%%%

%%%%%%%%%%%%%%%%%%%%%%%%%%%%%%%%%%%%%%%%%%%%%%%%%%%%%%%%%%%%%%%%%%%%%%%%%%%%%%%%%%%
\subsection{Goal-oriented adaptive FEM}
%%%%%%%%%%%%%%%%%%%%%%%%%%%%%%%%%%%%%%%%%%%%%%%%%%%%%%%%%%%%%%%%%%%%%%%%%%%%%%%%%%%

In the context of computational partial differential equations (PDEs), standard adaptivity aims to approximate some unknown exact solution $u$ in the energy norm. While results on optimal convergence rates of adaptive FEM (AFEM) with respect to number of degrees of freedom or the overall computational cost are essentially tailored to the Dörfler marking strategy~\cite{doerfler1996}  (see, e.g.,~\cite{bdd2004, stevenson2007, ckns2008, axioms} and~\cite{stevenson2007,ghps2021} for quasi-optimal computational cost), plain convergence is mathematically understood for a quite general class of marking strategies and mathematical settings~\cite{msv2008, siebert2011, gp2021}.
 
However, in many applications in science and engineering, one is only interested in a derived  functional value $G(u)$ (also called \emph{quantity of interest}) of the PDE solution $u$, rather than the solution $u$ as a whole.
Unlike standard adaptivity,  goal-oriented adaptivity thus aims to approximate only $G(u)$.
Because of its practical relevance, it has also attracted quite some attention in the mathematical literature; see, e.g.,~\cite{eehj1995,br2001,bangerth-rannacher,gs2002} for some seminal works. 
These works led to the development of the \emph{dual-weighted residual (DWR)} method, which uses an adaptive algorithm where mesh-refinement is steered by residual error estimators for the approximation of the primal PDE solution $u$ that are locally weighted by the approximation error for the solution $z$ to the dual PDE.
While the DWR method is easy to implement in practice (even for nonlinear problems), it lacks a thorough convergence analysis, since a practical implementation must replace the unknown exact dual solution $z$ by a computable approximation $z \approx \widetilde{z}$, but fails to control this consistency error.

As an alternative to the DWR method, the works~\cite{ms2009,bet2011,hp2016,fpz2016} employ a non-trivial combination of primal and dual error indicators for the marking step to drive down an overall goal error estimator with optimal algebraic convergence rates.
As far as mathematical convergence results are concerned, available results rely on variants of the Dörfler marking strategy; see, e.g.,~\cite{ms2009,bet2011,fpz2016} for linear convergence with optimal convergence rates with respect to the number of degrees of freedom and~\cite{bgip2021} for optimal convergence rates with respect to the overall computational cost. However, unlike AFEM, a thorough mathematical analysis of plain convergence of goal-oriented adaptive FEM (GOAFEM) is open and the subject of the present work.

%%%%%%%%%%%%%%%%%%%%%%%%%%%%%%%%%%%%%%%%%%%%%%%%%%%%%%%%%%%%%%%%%%%%%%%%%%%%%%%%%%%
\subsection{An abstract approach to plain convergence of GOAFEM}
%%%%%%%%%%%%%%%%%%%%%%%%%%%%%%%%%%%%%%%%%%%%%%%%%%%%%%%%%%%%%%%%%%%%%%%%%%%%%%%%%%%

Following the corresponding works on plain convergence of AFEM~\cite{msv2008, siebert2011, gp2021}, we consider an abstract GOAFEM strategy: 
The PDE is usually formulated on a domain  $\Omega \subset \R^d$ with dimension $d \in \N_{\ge1}$ and the solution is sought in a space $\XX$ of functions on $\Omega$.
The goal quantity $G(u)$ is modeled by a continuous goal functional $G \colon \XX \to \R$.
To efficiently compute the goal value, goal-oriented adaptive algorithms adaptively refine a triangulation $\TT_\ell$ of $\Omega$ and, thus, enrich a corresponding (not necessarily conforming) discrete space $\XX_\ell$.
On this space, discrete solutions $u_\ell, z_\ell \in \XX_\ell$ with corresponding refinement indicators $\eta_\ell(T), \zeta_\ell(T) \in \R$ for all $T \in \TT_\ell$ as well as a discrete goal $G_\ell \in \R$ are computed.
The \emph{primal} function $u_\ell$ acts as approximation to the sought solution $u \in \XX$.
The \emph{dual} function $z_\ell$, on the other hand, is only used to compute the refinement indicators $\zeta_\ell(T)$ and is usually chosen such that, by a duality argument, it satisfies the \textsl{a posteriori} goal error bound
\begin{equation}
\label{eq:intro-estimate}
	|G(u) - G_\ell|
	\lesssim
	\bigg[ \sum_{T \in \TT_\ell} \eta_\ell(T)^2 \bigg]^{1/2}
	\bigg[ \sum_{T \in \TT_\ell} \zeta_\ell(T)^2 \bigg]^{1/2}
	=: \eta_\ell \, \zeta_\ell,
\end{equation}
where $\lesssim$ denotes $\leq$ up to some generic multiplicative constant.
Since, in contrast to the unknown goal error $|G(u) - G_\ell|$, the goal error estimator $\eta_{\ell} \zeta_{\ell}$ is \emph{computable}, any adaptive algorithm can only aim to minimize $\eta_{\ell} \zeta_{\ell}$.

E.g., for linear elliptic PDEs with linear goal $G$ in the standard Lax--Milgram setting, $u, z \in \XX$ are the solutions of
\begin{equation*}
	a(u,v) = F(v)
	\quad \text{and} \quad
	a(v,z) = G(v)
	\quad \text{for all } v \in \XX,
\end{equation*}
respectively, where $a(\cdot, \cdot)$ is a continuous and elliptic bilinear form and $F$ is a bounded linear functional on $\XX$.
For some discrete subspace $\XX_\ell \subset \XX$, the solutions are approximated by discrete functions $u_\ell, z_\ell \in \XX_\ell$ solving
\begin{equation*}
	a(u_\ell,v_\ell) = F(v_\ell)
	\quad \text{and} \quad
	a(v_\ell,z_\ell) = G(v_\ell)
	\quad \text{for all } v_\ell \in \XX_\ell.
\end{equation*}
Then, the goal error can be estimated by the primal and dual energy errors, i.e.,
\begin{equation*}
	|G(u) - G(u_\ell)|
	=
	a(u - u_\ell, z)
	=
	a(u - u_\ell, z - z_\ell)
	\lesssim
	\norm{u-u_\ell}_\XX \norm{z-z_\ell}_\XX,
\end{equation*}
from which~\eqref{eq:intro-estimate} with $G_\ell := G(u_\ell)$ follows by reliability $\norm{u-u_\ell}_{\XX} \lesssim \eta_\ell$ and $\norm{z-z_\ell}_{\XX} \lesssim \zeta_\ell$ of the error estimators.

In algorithmic terms, the goal-adaptive process takes the following form.
\begin{algorithm}[Abstract goal-oriented algorithm]\label{alg:goafem}
	\textsc{Input:} Initial triangulation $\TT_0$. \newline
	\textsc{Loop:} For all $\ell=0,1,2,\dots$ do: 
	\begin{enumerate}[label={\rm(\roman*)}]
		\item \textbf{SOLVE:} Compute the discrete solutions $u_\ell, z_\ell \in \XX_\ell$.
		\item \textbf{ESTIMATE:} Compute refinement indicators $\eta_\ell(T)$ and $\zeta_\ell(T)$ for all $T\in\TT_\ell$.
		\item \textbf{MARK:} Determine a set $\MM_\ell \subseteq \TT_\ell$ of marked elements.
		\item \textbf{REFINE:} Compute $\TT_{\ell+1} := \refine(\TT_\ell, \MM_\ell)$.
	\end{enumerate}
	\textsc{Output:} Triangulations $\TT_\ell$ and goal error estimators $\eta_\ell \zeta_\ell$.
\end{algorithm}

The main difference to standard adaptive algorithms (see, e.g., the seminal contributions~\cite{bv1984,doerfler1996,stevenson2007} or~\cite{axioms,gp2021} for some recent overview articles) is the marking step in Algorithm~\ref{alg:goafem}(iii), which takes into account primal and dual refinement indicators.
The focus of the present work is thus to analyze and understand for which marking criteria at least plain convergence can be guaranteed. To this end,
we consider two classes of marking criteria that generalize existing marking criteria for goal-oriented algorithms~\cite{ms2009,bet2011,fpz2016}.
We then show that \textsl{a priori} convergence of the approximate functions $u_\ell, z_\ell$ to \emph{some} limits (which do not need to coincide with $u$ and $z$, respectively) implies plain convergence of the goal error estimator, i.e.,
\begin{equation*}
	\eta_\ell \, \zeta_\ell
	\to 0
	\quad
	\text{as } \ell \to \infty.
\end{equation*}
The proof of this result requires either the abstract framework of~\cite{msv2008} for mesh-refinement (marked elements are refined, elements are the union of their children, and some size restrictions apply to all generated elements) and the refinement indicators $\eta_\ell(T), \zeta_\ell(T)$ (mainly discrete local efficiency); or similar assumptions as in~\cite{gp2021} for mesh-refinement (marked elements are refined and elements are the union of their children) and the refinement indicators (stability on non-refined elements and reduction on refined elements).

In particular, the convergence analysis presented in this work significantly relaxes the assumptions needed to prove plain convergence of the goal error in previous works.
Furthermore, it generalizes the marking criteria presented in~\cite{ms2009,bet2011,fpz2016} in several directions, in particular, to incorporate marking criteria from~\cite{msv2008,siebert2011} and to encompass marking criteria for nonlinear problems~\cite{hp2016,bip2021,bbi+2022}, where available results are scarce~\cite{bip2021,bbi+2022}, even if only plain convergence is aimed for~\cite{hpz15}.

%%%%%%%%%%%%%%%%%%%%%%%%%%%%%%%%%%%%%%%%%%%%%%%%%%%%%%%%%%%%%%%%%%%%%%%%%%%%%%%%%%%
\subsection{Outline}
%%%%%%%%%%%%%%%%%%%%%%%%%%%%%%%%%%%%%%%%%%%%%%%%%%%%%%%%%%%%%%%%%%%%%%%%%%%%%%%%%%%

We begin in Section~\ref{sec:setting} by outlining our adaptive algorithm and all assumptions of the two abstract settings, which are subsequently motivated in Section~\ref{sec:motivation} by means of the standard Poisson model problem with linear and nonlinear goal functional.
In Section~\ref{sec:results}, we state all main results, namely plain convergence of all proposed variations of our abstract adaptive algorithm.
The proofs of these results are then given in Sections~\ref{sec:proofA}--\ref{sec:proofB}.
Finally, we give some concrete examples that fit into our abstract settings as well as numerical results in Section~\ref{sec:examples}.

%%%%%%%%%%%%%%%%%%%%%%%%%%%%%%%%%%%%%%%%%%%%%%%%%%%%%%%%%%%%%%%%%%%%%%%%%%%%%%%%%%%
\subsection{Notation}
%%%%%%%%%%%%%%%%%%%%%%%%%%%%%%%%%%%%%%%%%%%%%%%%%%%%%%%%%%%%%%%%%%%%%%%%%%%%%%%%%%%
Throughout, $a \lesssim b$ abbreviates $a \leq C \, b$ with an arbitrary constant $C>0$ that does not depend on the local mesh size.
Furthermore, we denote by $a \simeq b$ that there holds $a \lesssim b$ and $b \lesssim a$.

%\clearpage
%!TEX root = main.tex

%%%%%%%%%%%%%%%%%%%%%%%%%%%%%%%%%%%%%%%%%%%%%%%%%%%%%%%%%%%%%%%%%%%%%%%%%%%%%%%%%%%
%%%%%%%%%%%%%%%%%%%%%%%%%%%%%%%%%%%%%%%%%%%%%%%%%%%%%%%%%%%%%%%%%%%%%%%%%%%%%%%%%%%
\section{Abstract setting}\label{sec:setting}
%%%%%%%%%%%%%%%%%%%%%%%%%%%%%%%%%%%%%%%%%%%%%%%%%%%%%%%%%%%%%%%%%%%%%%%%%%%%%%%%%%%
%%%%%%%%%%%%%%%%%%%%%%%%%%%%%%%%%%%%%%%%%%%%%%%%%%%%%%%%%%%%%%%%%%%%%%%%%%%%%%%%%%%

Let $\Omega \subset \R^d$, $d \in \N_{\ge 1}$ be a bounded Lipschitz domain with positive Lebesgue measure $|\Omega|>0$.
Furthermore, let $\XX$ be a normed space of functions defined on $\Omega$.
We are interested in the value $G(u)$ of some given continuous functional $G \colon \XX \to \R$ evaluated at the solution $u$ of some given PDE.

%%%%%%%%%%%%%%%%%%%%%%%%%%%%%%%%%%%%%%%%%%%%%%%%%%%%%%%%%%%%%%%%%%%%%%%%%%%%%%%%%%%
\subsection{Discretization and mesh-refinement}
%%%%%%%%%%%%%%%%%%%%%%%%%%%%%%%%%%%%%%%%%%%%%%%%%%%%%%%%%%%%%%%%%%%%%%%%%%%%%%%%%%%

We suppose that discrete primal and dual solution are based on a mesh of $\Omega$, i.e., a finite set $\TT_\coarse$ that satisfies that
\begin{itemize}
    \item all elements $T\in\TT_\coarse$ are compact subsets of $\overline{\Omega}$ with positive measure $|T|>0$;
    \item for all $T,T' \in \TT_\coarse$ with $T \neq T'$, it holds that $|T \cap T'| = 0$;
    \item $\TT_\coarse$ covers $\overline{\Omega}$, i.e., $\bigcup\TT_\coarse = \overline\Omega$.
\end{itemize}
For a mesh $\TT_\coarse$ of $\Omega$ and a set of marked elements $\MM_\coarse \subseteq \TT_\coarse$, we call $\TT_\fine := \refine(\TT_\coarse,\MM_\coarse)$ the \emph{refinement} of $\TT_\coarse$ with respect to $\MM_\coarse$, if
\begin{enumerate}[label={\bf(R\arabic*)}, ref={R\arabic*}]
    \item\label{assumption:refine1} 
    all marked elements are refined, i.e., $\MM_\coarse \subseteq \TT_\coarse \backslash \TT_\fine$;

    \item\label{assumption:refine2} 
    all elements $T\in\TT_\coarse$ are the union of their children, i.e.,
    \begin{equation*}
        T = \bigcup \TT_\fine |_T
        \quad
        \text{for all } T\in\TT_\coarse,
        \text{ where} \quad
        \TT_\fine |_T 
        :=
        \{T' \in \TT_\fine \mid T' \subseteq T\}.
    \end{equation*}
\end{enumerate}
In the following, we suppose that the refinement strategy is arbitrary but fixed, satisfying~\eqref{assumption:refine1}--\eqref{assumption:refine2}.
Furthermore, for any given mesh $\TT_\coarse$, we write $\TT_\fine \in \T(\TT_\coarse)$ if $\TT_\fine$ is obtained by a finite number of refinement steps, i.e., there exist $\TT_0, \TT_1, \ldots, \TT_N$ with $\TT_0 = \TT_\coarse$ and $\TT_N = \TT_\fine$, and $\MM_\ell \subseteq \TT_\ell$ with $\TT_{\ell+1} = \refine(\TT_\ell, \MM_\ell)$ for all $\ell = 0, \ldots, N-1$.
Finally, let $\TT_0$ be a fixed initial mesh and define the set of all admissible meshes as $\T:=\T(\TT_0)$.

For each mesh $\TT_\coarse \in \T$, let $\XX_\coarse \subset \XX$ be an associated discrete subspace.
Let further $u_\coarse \in \XX_\coarse$ denote a suitable approximation to $u$ and $z_\coarse \in \XX_\coarse$ some additional function such that there holds \textsl{a priori} convergence, i.e., for the sequence $(\TT_\ell)_{\ell \in \N} \subset \T$ of meshes generated by the adaptive Algorithm~\ref{alg:goafem}, there exist $u_\infty, z_\infty \in \XX$ such that 
\begin{equation}\label{eq:a_priori_konv}
	\norm{u_\infty - u_\ell}_\XX \to 0
	\quad \text{and} \quad
	\norm{z_\infty - z_\ell}_\XX \to 0
	\qquad
	\text{as } \ell \to \infty.
\end{equation}

\begin{remark}
Note that it is not required that $u_\infty$ and $z_\infty$ coincide with $u$ and $z$, respectively.
Rather, \textsl{a priori} convergence is a standard assumption for the convergence analysis of adaptive algorithms (see, e.g., \cite{bv1984, msv2008}) and usually relies on structural properties of the underlying discretization. For conforming FEM with nested spaces $\XX_\ell \subseteq \XX_{\ell+1}$ for all $\ell \in \N_0$, it follows from the C\'ea-type quasi-optimality of discrete solutions, and $u_\infty, z_\infty \in \XX_\infty := \overline{\bigcup_{\ell=0}^\infty \XX_\ell}$ are the respective Galerkin approximations of $u$ and $z$ on the ``discrete limit space'' $\XX_\infty$. However, we note that the analysis is not necessarily restricted to conforming FEM, but also applies to a wider range of non-conforming methods as long as \textsl{a~priori} convergence is satisfied; see~\cite{ks2021,ks2022} for recent results on \textsl{a~priori} convergence for non-conforming FEM.
In this case, $\XX$ is a larger superspace containing the discrete FEM spaces as well as the space of the continuous PDE setting.
\end{remark}

%%%%%%%%%%%%%%%%%%%%%%%%%%%%%%%%%%%%%%%%%%%%%%%%%%%%%%%%%%%%%%%%%%%%%%%%%%%%%%%%%%%
\subsection{Goal-oriented error estimation}
%%%%%%%%%%%%%%%%%%%%%%%%%%%%%%%%%%%%%%%%%%%%%%%%%%%%%%%%%%%%%%%%%%%%%%%%%%%%%%%%%%%

Suppose that one can compute refinement indicators $\eta_\coarse(T), \zeta_\coarse(T) \geq 0$ for all $T \in \TT_\coarse$.
For $\xi_\coarse \in \{ \eta_\coarse, \zeta_\coarse \}$, define
\begin{equation}\label{eq:def_ref-ind}
    \xi_\coarse := \xi_\coarse ( \TT_\coarse ),
    \quad \text{where} \quad
    \xi_\coarse ( \mathcal{U}_\coarse ) := \bigg[ \sum_{T \in \mathcal{U}_\coarse} \xi_\coarse (T)^2 \bigg]^{1/2}
    \quad \text{for all }
    \mathcal{U}_\coarse \subseteq \TT_\coarse.
\end{equation}
Let $G_\coarse \in \R$ be the computable discrete goal quantity.
We assume that there exists a constant $\Cgoal > 0$ such that the goal error can be estimated by the (global) refinement indicators through
\begin{equation}
\label{eq:goal-error-estimate}
	|G(u) - G_\coarse| \leq \Cgoal \, \eta_\coarse\zeta_\coarse
	\quad
	\text{for all } \TT_\coarse \in \T.
\end{equation}

%%%%%%%%%%%%%%%%%%%%%%%%%%%%%%%%%%%%%%%%%%%%%%%%%%%%%%%%%%%%%%%%%%%%%%%%%%%%%%%%%%%
\subsection{Marking strategy}
%%%%%%%%%%%%%%%%%%%%%%%%%%%%%%%%%%%%%%%%%%%%%%%%%%%%%%%%%%%%%%%%%%%%%%%%%%%%%%%%%%%

The marking strategies we employ combine primal and dual error indicators and use one of the following two marking strategies for refinement indicators $\mu_\coarse(T)$ and a marking parameter $0 < \theta \leq 1$:
\begin{itemize}
	\item A bulk chasing strategy, also known as \emph{Dörfler marking} \cite{doerfler1996}:
	Find a set $\MM_\coarse \subseteq \TT_\coarse$ of marked elements such that
	\begin{equation}
	\label{eq:doerfler}
		\theta \, \mu_\coarse^2
		\leq
		\mu_\coarse(\MM_\coarse)^2.
	\end{equation}
	
	\item A general marking criterion proposed in~\cite{msv2008}:
	With a fixed function
	\begin{equation}
	\label{eq:M-assumptions}
		M \colon \R_{\geq 0} \to \R_{\geq 0}
		\quad \text{that is continuous at 0 with} \quad
		M(0) = 0,
	\end{equation}
	find a set $\MM_\coarse \subseteq \TT_\coarse$ of marked elements that satisfies
	\begin{equation}
	\label{eq:msv-criterion}
		\max_{T \in \TT_\coarse \backslash \MM_\coarse} \mu_\coarse(T) 
		\leq
		M \big( \mu_\coarse(\MM_\coarse) \big).
	\end{equation}
\end{itemize}
For either strategy, convergence of standard AFEM is known in general frameworks~\cite{msv2008, siebert2011, gp2021}.

\begin{remark}
	Note that the general criterion~\eqref{eq:msv-criterion} encompasses the \emph{maximum criterion}
	\begin{equation}
	\label{eq:maximum-marking}
		\MM_\coarse
		:=
		\set[\big]{T \in \TT_\coarse}{\mu_\coarse(T) \geq (1-\theta) \max_{T' \in \TT_\coarse} \mu_\coarse(T')}
	\end{equation}
	as well as the \emph{equidistribution criterion}
	\begin{equation}
	\label{eq:equidistribution-marking}
		\MM_\coarse
		:=
		\set[\big]{T \in \TT_\coarse}{\mu_\coarse(T) \geq (1-\theta) \, \mu_\coarse / \#\TT_\coarse}
	\end{equation}
	by $M(t) := t$.
	Furthermore, also the Dörfler marking~\eqref{eq:doerfler} is included by $M(t) := \sqrt{(1-\theta) \theta^{-1}} \, t$; see~\cite{gp2021}.
\end{remark}

In the following, we propose a generalized marking strategy in the framework of GOAFEM, which generalizes~\eqref{eq:msv-criterion} from AFEM to GOAFEM.
\begin{markingstrategy}\label{alg:mark_a}
	\textsc{Input:} 
    Function $V \colon \R_{\ge 0}^2 \to \R_{\ge 0}$ that is continuous at $(0,0)$ with $V(0,0) = 0$ and refinement indicators $\eta_\coarse(T)$ and $\zeta_\coarse(T)$ for all $T \in \TT_\coarse$.
    \newline
    Choose $\MM_\coarse \subseteq \TT_\coarse$ such that
    \begin{equation}\label{eq:mark_a}
        \big[
            \max_{ T \in \TT_\coarse \backslash \MM_\coarse } \eta_\coarse (T) 
        \big]
        \big[ 
            \max_{ T \in \TT_\coarse \backslash \MM_\coarse } \zeta_\coarse (T)
        \big]
        \leq
        V \big(
            \eta_\coarse ( \MM_\coarse ), 
            \zeta_\coarse ( \MM_\coarse ) 
        \big).
    \end{equation}
	\textsc{Output:} Set of marked elements $\MM_\coarse$.
\end{markingstrategy}

\begin{remark}
\label{rem:mark_a_stronger}
	Marking Strategy~\ref{alg:mark_a} can be seen as a generalization of the GOAFEM marking strategies presented in the seminal works~\cite{ms2009,bet2011}.
	In the following, we show that both strategies (with the general marking criterion~\eqref{eq:msv-criterion} instead of the Dörfler marking~\eqref{eq:doerfler} used in \cite{ms2009,bet2011}) already imply marking with our Marking Strategy~\ref{alg:mark_a} along the sequence $(\TT_\ell)_{\ell \in \N_0}$ generated by Algorithm~\ref{alg:goafem}.
	To this end, let $M$ be a function that satisfies~\eqref{eq:M-assumptions} and is quasi-monotonously increasing, i.e., there exists $\Cmon > 0$ such that \begin{align}\label{eq230304a}
	M(s) \leq \Cmon \, M(t) 
	\quad \text{for all $s,t \in \Rge$ with $s \leq t$.}
\end{align}%
	Note that in both settings considered below, there exists a constant $\Cest > 0$ such that (see Lemma~\ref{lemma:msv-estimators-bounded} and Lemma~\ref{lemma:estimator-boundedness} below)
	\begin{equation}\label{eq230304b}
		\sup_{\ell \in \N_0} ( \eta_\ell + \zeta_\ell )
		\leq 
		\Cest 
		< 
		\infty.
	\end{equation}
	\begin{itemize}
		\item \textbf{First mark, then combine:}
		Let $\MM_\ell^\eta,  \MM_\ell^\zeta \subseteq \TT_\ell$ be sets with
		\begin{equation*}
			\max_{ T \in \TT_\ell \backslash \MM_\ell^\eta} \eta_\ell (T)
			\leq
			M \big( \eta_\ell ( \MM_\ell^\eta ) \big)
			\quad \text{and} \quad
			\max_{ T \in \TT_\ell \backslash \MM_\ell^\zeta} \zeta_\ell (T)
			\leq
			M \big( \zeta_\ell ( \MM_\ell^\zeta ) \big).
		\end{equation*}
		The set $\MM_\ell \subseteq \TT_\ell$ of marked elements is then chosen such that, for $\xi \in \{ \eta, \zeta \}$, there holds $\MM_\ell^\xi \subseteq \MM_\ell \subseteq \MM_\ell^\eta \cup \MM_\ell^\zeta$.
		By defining $V(x,y) := \Cest \Cmon \, \max \big\{ M(x), M(y) \big\}$, we have that
		\begin{align*}
			&\big[
				\max_{T \in \TT_\ell \backslash \MM_\ell} \eta_\ell (T)
			\big]
			\big[
				\max_{T \in \TT_\ell \backslash \MM_\ell} \zeta_\ell (T)
			\big]
			\eqreff{eq230304b}\leq
			\Cest \max_{T \in \TT_\ell \backslash \MM_\ell} \xi_\ell (T)
			\leq
			\Cest M \big( \xi_\ell ( \MM_\ell^\xi ) \big)
			\\& \qquad
			\eqreff{eq230304a}\leq
			\Cest \Cmon M \big( \xi_\ell ( \MM_\ell ) \big)
			\leq
			V \big(
				\eta_\ell ( \MM_\ell ),
				\zeta_\ell ( \MM_\ell )
			\big).
		\end{align*}
		Thus, $\MM_\ell$ satisfies~\eqref{eq:mark_a} from Marking Strategy~\ref{alg:mark_a}.
		Based on the Dörfler marking strategy~\eqref{eq:doerfler} for $\eta_\ell$ and $\zeta_\ell$, this GOAFEM marking strategy is proposed and analyzed in~\cite{ms2009} for the Poisson model problem, while the analysis is extended to general second-order linear elliptic PDEs in~\cite{fpz2016}.
		Moreover, based on the maximum criterion~\eqref{eq:maximum-marking}, this strategy is proposed and experimentally considered in~\cite{mvy2020}.
		Finally, such a marking criterion (with certain technical modifications) leads even to instance-optimal adaptive meshes for the 2D Poisson problem~\cite{ip2021}.
		
		\item \textbf{First combine, then mark:}
		With the weighted error estimator
		\begin{equation*}
			\rho_\ell(T)^2
			:=
			\eta_\ell(T)^2 \, \zeta_\ell^2 + \eta_\ell^2 \, \zeta_\ell(T)^2,
		\end{equation*}
		the set $\MM_\ell \subseteq \TT_\ell$ of marked elements is then chosen such that
		\begin{equation}\label{eq230304c}
			\max_{T \in \TT_\ell \backslash \MM_\ell} \rho_\ell(T)
			\leq
			M \big( \rho_\ell(\MM_\ell) \big).
		\end{equation}
		 By defining $V(x,y) := \Cmon \, M \big( \Cest \, \sqrt{x^2 + y^2} \big)$, we have that
		 \begin{align*}
			 &\big[
			 	\max_{T \in \TT_\ell \backslash \MM_\ell} \eta_\ell (T)
			 \big]
			 \big[
			 	\max_{T \in \TT_\ell \backslash \MM_\ell} \zeta_\ell (T)
			 \big]
			 \leq
			 \max_{T \in \TT_\ell \backslash \MM_\ell}
			 \big( \eta_\ell (T)^2 \zeta_\ell^2 + \eta_\ell^2 \zeta_\ell (T)^2 \big)^{1/2}
			 \\& \qquad
			 =
			 \max_{T \in \TT_\ell \backslash \MM_\ell} \rho_\ell(T)
			 \eqreff{eq230304c}\leq
			 \rho_\ell(\MM_\ell)
			 =
			 M \big( \big[ \eta_\ell (\MM_\ell)^2 \zeta_\ell^2 + \eta_\ell^2 \zeta_\ell (\MM_\ell)^2 \big]^{1/2} \big)%\\
			 \\& \qquad
			 \stackrel{\eqref{eq230304a},\eqref{eq230304b}}\leq
			 \Cmon \, M \big( \Cest \big[ \eta_\ell (\MM_\ell)^2 + \zeta_\ell (\MM_\ell)^2 \big]^{1/2} \big)%\\
			 =
			 V \big(
				 \eta_\ell ( \MM_\ell ),
				 \zeta_\ell ( \MM_\ell )
			 \big).
		 \end{align*}
		 Thus, $\MM_\ell$ satisfies~\eqref{eq:mark_a} from Marking Strategy~\ref{alg:mark_a}. 
		Based on the Dörfler marking strategy~\eqref{eq:doerfler} for $\rho_\ell$, this GOAFEM marking strategy is proposed in~\cite{bet2011} and analyzed in~\cite{fpz2016}.
	\end{itemize}     
\end{remark}

The second strategy first combines the refinement indicators and then marks elements based on these weighted refinement indicators using a reformulation of the Dörfler marking:
If $\mu_\coarse \neq 0$, the set $\MM_\coarse$ from Dörfler marking~\eqref{eq:doerfler} satisfies
\begin{equation*}
	0
	<
	\theta
	\leq
	\frac{\mu_\coarse(\MM_\coarse)^2}{\mu_\coarse^2}.
\end{equation*}
This observation is used in~\cite{bet2011} to find a common framework for the marking strategies outlined in Remark~\ref{rem:mark_a_stronger}.
We present this in a more general manner.

\begin{markingstrategy}\label{alg:mark_b}
	\textsc{Input:} Weighting function $W \colon \Rge^2 \to \Rge$ that satisfies
	\begin{equation}
	\label{eq:C-assumptions}
		W(x,y) \leq C_W \max\{x,y\}
		\quad \text{for all } x,y \in [0,1],
		\quad \text{and} \quad
		W(1,1) = 1,
	\end{equation}
	where $C_W > 0$ is a fixed constant, refinement parameter $\theta \in (0,1]$, triangulation $\TT_\coarse$ and refinement indicators $\eta_\coarse(T)$ and $\zeta_\coarse(T)$ for all $T \in \TT_\coarse$. \newline
	Choose $\MM_\coarse \subseteq \TT_\coarse$ such that 
	\begin{equation}\label{eq:mark_b}
		0
		<
		\theta
		\leq
		W \Big( \frac{\eta_\coarse (\MM_\coarse)^2}{\eta_\coarse^2},
		\frac{\zeta_\coarse(\MM_\coarse)^2}{\zeta_\coarse^2}\Big).
	\end{equation}
	\textsc{Output:} Set of marked elements $\MM_\coarse$.
\end{markingstrategy}

\begin{remark}\label{rem:estimators_nonzero}
	If there holds $\eta_{\ell_0} \zeta_{\ell_0} = 0$ for some minimal $\ell_0 \in \N_0$ in Algorithm~\ref{alg:goafem}, then, by the goal error estimate~\eqref{eq:goal-error-estimate}, it follows that there already holds $G(u_{\ell_0}) = G(u)$.
	Therefore, Algorithm~\ref{alg:goafem} can be terminated after step~{\normalfont\rmfamily(ii)} in the $\ell_0$-th iteration.
	In particular, this means that $\eta_\ell \zeta_\ell \neq 0$ for all $\ell < \ell_0$ and Marking Strategy~\ref{alg:mark_b} is well-defined for all steps executed by Algorithm~\ref{alg:goafem}.
	In this case, for formal reasons, we define the sequence $(\TT_\ell)_{\ell \in \N_0} \subset \T$ by $\TT_{\ell} := \TT_{\ell_0}$ for all $\ell \geq \ell_0$.
\end{remark}

\begin{remark}\label{rem:marking-strategies}
	{\normalfont\rmfamily (i)}
	We note that Marking Strategy~\ref{alg:mark_b} is indeed stronger than and implies Marking Strategy~\ref{alg:mark_a}:
	To see this, suppose that $\MM_\ell \subseteq \TT_\ell$ satisfies Marking Strategy~\ref{alg:mark_b} and define
	\begin{equation*}
		V(x,y)
		:=
		\Bigl( \frac{\Cest C_W}{\theta} \Bigr)^{1/2} \max\{x,y\}
	\end{equation*}
	with $\Cest$ from~\eqref{eq230304b} and $C_W$ from~\eqref{eq:C-assumptions}.
	Without loss of generality, suppose that there holds $\eta_\coarse(\MM_\coarse)^2 / \eta_\coarse^2 \leq \zeta_\coarse(\MM_\coarse)^2 / \zeta_\coarse^2$.
	Then, it follows that
	\begin{equation*}
		\theta
		\stackrel{\eqref{eq:mark_b}}{\leq}
		W \Bigl( \frac{\eta_\coarse(\MM_\coarse)^2}{\eta_\coarse^2}, \frac{\zeta_\coarse(\MM_\coarse)^2}{\zeta_\coarse^2} \Bigr)
		\stackrel{\eqref{eq:C-assumptions}}{\leq}
		C_W \max \Bigl\{ \frac{\eta_\coarse(\MM_\coarse)^2}{\eta_\coarse^2}, \frac{\zeta_\coarse(\MM_\coarse)^2}{\zeta_\coarse^2} \Bigr\}
		=
		C_W \frac{\zeta_\coarse(\MM_\coarse)^2}{\zeta_\coarse^2}.
	\end{equation*}
	Together with~\eqref{eq230304b} this yields
	\begin{equation*}
		\theta \, \eta_\coarse^2 \, \zeta_\coarse^2
		\eqreff{eq230304b}{\leq}
		C_W \Cest \, \zeta_\coarse(\MM_\coarse)^2
		\leq
		C_W \Cest \max\{ \eta_\coarse(\MM_\coarse), \zeta_\coarse(\MM_\coarse) \}^2.
	\end{equation*}
	The concrete definition of $V(x,y)$ then proves~\eqref{eq:mark_a}.
	
	{\normalfont\rmfamily (ii)}
	The reason for considering Marking Strategy~\ref{alg:mark_b} instead of only Marking Strategy~\ref{alg:mark_a} is that it allows to relax the assumptions on the mesh-refinement; see Theorem~\ref{thm:goafem_b} below.
\end{remark}

%%%%%%%%%%%%%%%%%%%%%%%%%%%%%%%%%%%%%%%%%%%%%%%%%%%%%%%%%%%%%%%%%%%%%%%%%%%%%%%%%%%
\subsection{Estimator properties}
%%%%%%%%%%%%%%%%%%%%%%%%%%%%%%%%%%%%%%%%%%%%%%%%%%%%%%%%%%%%%%%%%%%%%%%%%%%%%%%%%%%

For the properties required of the refinement indicators, we consider two different settings that are laid out in the following.
In the context of standard AFEM, the first one stems from~\cite{msv2008} and covers a wide range of problems.
The second one stems from~\cite{gp2021} and is essentially based on higher-level properties of the estimator as extracted in~\cite{axioms}.
While the second setting is usually easier to prove in practice, it is tailored to residual-based error estimators, since it implicitly exploits that the local refinement indicators are weighted by the local mesh-size.
We refer to Section~\ref{sec:motivation} for an illustrative example for these properties.

\subsubsection{\textbf{Setting I based on discrete local efficiency}}\label{subsec:msv-setting}
%%%%%%%%%%%%%%%%%%%%%%%%%%%%%%%%%%%%%%%%%%%%%%%%%%%%%%%%%%%%%%%%%%%%%%%%%%%%%%%%%%%
We present here the setting originally conceived in the work~\cite{msv2008} for plain convergence of standard AFEM, which essentially relies on local discrete efficiency; see~\eqref{assumption:ldeff} below.

For all $\TT_\coarse \in \T$ and $\omega \subseteq \overline{\Omega}$, we define the corresponding patch and its area as
\begin{equation}
\label{eq:def-patch}
	\TT_\coarse [ \omega ]
	:=
	\set[\big]{T \in \TT_\coarse}{\overline{\omega} \cap T \neq \emptyset}
	\quad \text{and} \quad
	\Omega_\coarse [ \omega ]
	:=
	\bigcup \TT_\coarse [ \omega ],
\end{equation}
respectively.
For all $\UU_\coarse \subseteq \TT_\coarse$, we abbreviate
\begin{equation*}
	\TT_\coarse [ \UU_\coarse ]
	:=
	\TT_\coarse \Big[ \bigcup \UU_\coarse \Big]
	\quad \text{and} \quad
	\Omega_\coarse [ \UU_\coarse ]
	:=
	\bigcup \TT_\coarse [ \UU_\coarse ].
\end{equation*}

In addition to the properties~\eqref{assumption:refine1}--\eqref{assumption:refine2}, we require the following assumptions on the mesh-refinement:
With constants $0 < \rhochild < 1$ and $\Cpatch, \Csize \geq 1$, for all $\TT_\coarse \in \T$, $\MM_\coarse \subseteq \TT_\coarse$, and $\TT_\fine := \refine \big( \TT_\coarse, \MM_\coarse \big)$, it holds that
\begin{enumerate}[label={\bf(R\arabic*)}, ref={R\arabic*}, start=3]
	\item \label{assumption:refine3}
	proper children are uniformly smaller than their parents, i.e.,
	\begin{equation*}
		|T'| < \rhochild |T|
		\quad \text{for all }
		T \in \TT_\coarse \backslash \TT_\fine
		\text{ and }
		T' \in \TT_\fine
		\text{ with }
		T' \subseteq T;
	\end{equation*}
	
	\item \label{assumption:refine4}
	the number of elements per patch is uniformly bounded, i.e.,
	\begin{equation*}
		\# \TT_\coarse [T]
		\leq
		\Cpatch
		\quad \text{for all }
		T \in \TT_\coarse;
	\end{equation*}
	
	\item \label{assumption:refine5}
	patch elements have comparable size, i.e.,
	\begin{equation*}
		|T| \leq \Csize \, |T'|
		\quad \text{for all }
		T \in \TT_\coarse
		\text{ and }
		T' \in \TT_\coarse[T].
	\end{equation*}
\end{enumerate}
We further assume that the space $\XX$ is equipped with a familiy $\set[\big]{\norm{\cdot}_{\XX(\omega)}}{\omega \subseteq \overline{\Omega} \text{ measurable}}$ of seminorms which
\begin{enumerate}[label={\bf(N\arabic*)}, ref={N\arabic*}]
	\item \label{assumption:norm1}
	are subadditive, i.e., for all measurable subsets $\omega_1, \omega_2 \subseteq \overline{\Omega}$ with $| \omega_1 \cap \omega_2 | = 0$, it holds that
	\begin{equation*}
		\norm{v}_{\XX(\omega_1)} + \norm{v}_{\XX(\omega_2)}
		\leq
		\norm{v}_{\XX(\omega_1 \cup \omega_2)}
		\quad \text{for all }
		v \in \XX;
	\end{equation*}
	
	\item \label{assumption:norm2}
	are absolutely continuous with respect to the measure $|\cdot|$, i.e.,
	\begin{equation*}
		\lim_{ |\omega| \to 0 } \norm{v}_{\XX(\omega)} = 0
		\quad \text{for all }
		v \in \XX;
	\end{equation*}
	
	\item \label{assumption:norm3}
	approximate the natural norm on $\XX$, i.e., $\norm{\cdot}_{\XX(\overline{\Omega})} = \norm{\cdot}_\XX$.
\end{enumerate}
We suppose that there exist an additional normed space $\DD$ that is also equipped with a familiy of seminorms that satisfy~\eqref{assumption:norm1}--\eqref{assumption:norm3}, and some fixed data terms $D^\eta, D^\zeta \in \DD$.
Finally, let $0 <\rhodeff < 1$ and $\Cdeff \ge 1$ be constants. 
For all $\TT_\coarse \in \T$ and $\UU_\coarse \subseteq \TT_\coarse$ we denote by $\T( \TT_\coarse, \UU_\coarse, \rhodeff )$ the set of all $\TT_\fine \in \T(\TT_\coarse)$ such that $|T''| < \rhodeff |T'|$ for all $T \in \UU_\coarse, T' \in \TT_\coarse [T]$ and $T'' \in \TT_\fine$ with $T''\subseteq T'$.
Then, suppose that there hold:
\begin{enumerate}[label={\bf(A\arabic*)}, ref={A\arabic*}]
	\item \label{assumption:ldeff}
	\textbf{Local discrete efficiency}, i.e., for $(v, \xi) \in \{ (u, \eta), (z, \zeta) \}$, all $\TT_\coarse \in \T$, all $T \in \TT_\coarse$, and $\TT_\fine \in \T( \TT_\coarse, \{T\}, \rhodeff)$, the corresponding discrete solutions satisfy
	\begin{equation*}
		\Cdeff^{-2} \xi_\coarse (T)^2
		\leq
		\norm{v_\fine - v_\coarse}_{\XX( \Omega_\coarse [T] )}^2
		+ \osc^\xi_\coarse ( \TT_\coarse [T] )^2;
	\end{equation*}
	
	\item \label{assumption:osc}
	\textbf{Boundedness of the oscillation term} $\osc^\xi_\coarse ( \TT_\coarse [T] ) \in \R_{\ge0}$ from~\eqref{assumption:ldeff}, i.e., for $(v, \xi) \in \{ (u, \eta), (z, \zeta) \}$, it holds that
	\begin{equation*}
		\osc^\xi_\coarse ( \TT_\coarse [T] )^2
		\le
		w(|T|)
		\big[ 
		\norm{v_\coarse}_{\XX( \Omega_\coarse [T] )}^2
		+ \norm{D^\xi}_{\DD( \Omega_\coarse [T] )}^2
		\big]
		\quad \text{for all }
		T \in \TT_\coarse
	\end{equation*}
	where 
	$w \colon \R_{\ge0} \to \R_{\ge0}$
	is a monotonously increasing weight function, that is continuous at 
	$0$ with $w(0) = 0$;
	
	\item \label{assumption:approx}
	\textbf{A natural approximation property}, i.e., for all $\varepsilon > 0$ and all $\TT_\coarse \in \T$ there exists $\TT_\fine \in \T( \TT_\coarse, \TT_\coarse, \rhodeff)$ with $\norm{u - u_\fine}_\XX + \norm{z - z_\fine}_\XX \leq \varepsilon$.
\end{enumerate}

\subsubsection{\textbf{Setting II based on high-level structural properties}}\label{subsec:msv-setting}
%%%%%%%%%%%%%%%%%%%%%%%%%%%%%%%%%%%%%%%%%%%%%%%%%%%%%%%%%%%%%%%%%%%%%%%%%%%%%%%%%%%

In contrast to the previous setting, we follow here the recent work~\cite{gp2021} and only require the structural assumptions of stability~\eqref{assumption:stab} and reduction~\eqref{assumption:red}.
To this end, let $0 < \qred < 1$ and $S \colon \Rge^2 \to \Rge$ be a function that is continuous at~$(0,0)$ with $S(0,0)=0$ as well as bounded on compact subsets of $\Rge^2$.
For all $\TT_\coarse \in \T$ and $\TT_\fine \in \T \big(\TT_\coarse\big)$, we assume:
\begin{enumerate}[label={\bf(B\arabic*)}, ref={B\arabic*}]
	\item \label{assumption:stab}
	\textbf{Stability on non-refined elements}, i.e., for $\xi \in \{ \eta, \zeta \}$ it holds that
	\begin{equation*}
		\xi_\fine \big( \TT_\fine \cap \TT_\coarse \big)
		\leq
		\xi_\coarse \big( \TT_\fine \cap \TT_\coarse  \big) + S\big( \norm{u_\fine - u_\coarse}_\XX, \norm{z_\fine - z_\coarse}_\XX  \big);
	\end{equation*}
	\item \label{assumption:red}
	\textbf{Reduction on refined elements}, i.e., for $\xi \in \{ \eta, \zeta \}$ it holds that
	\begin{align*}
		\xi_\fine \big( \TT_\fine \backslash \TT_\coarse \big)^2
		\leq
		\qred \, \xi_\coarse \big( \TT_\coarse \backslash \TT_\fine \big)^2 + S\big( \norm{u_\fine - u_\coarse}_\XX, \norm{z_\fine - z_\coarse}_\XX  \big)^2.
	\end{align*}
\end{enumerate}

For a visual guide to the assumptions in both settings, see Figure~\ref{fig:settings}.

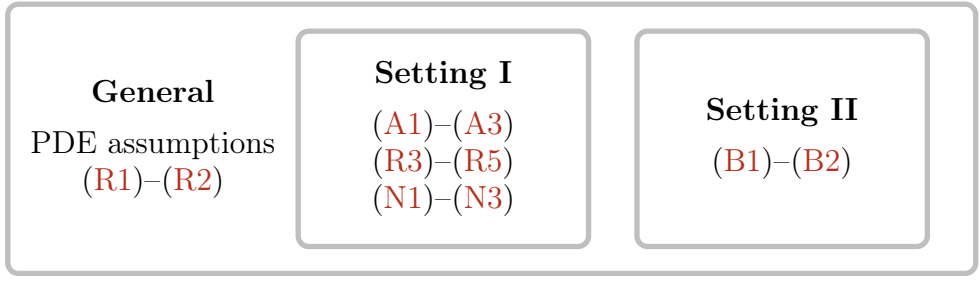
\begin{figure}
	\centering
	\begin{tikzpicture}[x=0.8\textwidth,y=0.15\textheight]
	\tikzstyle{category}=[line width=2pt, rounded corners, lightgray]
	\tikzstyle{categoryText}=[align=center]
	
	\draw[category] (0,0) rectangle (1,1);
	\node[categoryText] at (0.15,0.5) {\textbf{General} \\[1ex] PDE assumptions \\ \eqref{assumption:refine1}--\eqref{assumption:refine2}};
	
	\draw[category] (0.3,0.1) rectangle (0.6,0.9);
	\node[categoryText] at (0.45,0.5) {\textbf{Setting I} \\[1ex] \eqref{assumption:ldeff}--\eqref{assumption:approx}  \\ \eqref{assumption:refine3}--\eqref{assumption:refine5} \\ \eqref{assumption:norm1}--\eqref{assumption:norm3}};
	
	\draw[category] (0.65,0.1) rectangle (0.95,0.9);
	\node[categoryText] at (0.8,0.5) {\textbf{Setting II} \\[1ex] \eqref{assumption:stab}--\eqref{assumption:red}};
\end{tikzpicture}
	\caption{General scheme of assumptions made in this work.
	The assumptions about the PDE model as well as some of the assumptions on mesh-refinement (denoted by \enquote{R}) are general.
	Setting I requires further assumptions about mesh-refinement and involved norms (denoted by \enquote{N}).
	Finally, the assumptions on the estimator in Setting I and II are denoted by \enquote{A} and \enquote{B}, respectively.}
	\label{fig:settings}
\end{figure}

%\clearpage
%!TEX root = main.tex

%%%%%%%%%%%%%%%%%%%%%%%%%%%%%%%%%%%%%%%%%%%%%%%%%%%%%%%%%%%%%%%%%%%%%%%%%%%%%%%%%%%
%%%%%%%%%%%%%%%%%%%%%%%%%%%%%%%%%%%%%%%%%%%%%%%%%%%%%%%%%%%%%%%%%%%%%%%%%%%%%%%%%%%
\section{Motivating example: Poisson equation in \headlineRd}\label{sec:motivation}
%%%%%%%%%%%%%%%%%%%%%%%%%%%%%%%%%%%%%%%%%%%%%%%%%%%%%%%%%%%%%%%%%%%%%%%%%%%%%%%%%%%
%%%%%%%%%%%%%%%%%%%%%%%%%%%%%%%%%%%%%%%%%%%%%%%%%%%%%%%%%%%%%%%%%%%%%%%%%%%%%%%%%%%

We clarify the parts of the abstract Algorithm~\ref{alg:goafem} and the assumptions made in the last section by an illustrative example.
Let $\Omega \subset \R^d$ with $d \in \N_{\geq 1}$ be a bounded Lipschitz domain with polygonal boundary $\partial \Omega$ and $\XX := H^1_0(\Omega)$ equipped with the \emph{energy norm} $\norm{\cdot}_\XX:= \norm{\nabla (\cdot)}_{L^2(\Omega)}$.
For $f\in L^2(\Omega)$, we consider the weak formulation of the Poisson equation:
\begin{equation}
\label{eq:poisson}
	\text{Find } u \in \XX: \quad 
	a(u,v)
	:= \int_\Omega \nabla u \cdot \nabla v \d{x}
	= \int_\Omega fv \d{x}
	=: F(v)
	\quad \text{for all } v \in \XX.
\end{equation}
We are interested in the goal value
\begin{equation}
\label{eq:poisson-goal}
	G(u)
	:=
	\int_\Omega g \, u \d{x}
	\quad
	\text{with } g \in L^2(\Omega).
\end{equation}
To this end, we introduce the dual problem:
\begin{equation}
\label{eq:poisson-dual}
	\text{Find } z \in \XX: \quad 
	a(v,z)
	= G(v)
	\quad \text{for all } v \in \XX.
\end{equation}
We note that, according to the Lax--Milgram lemma, there exist unique solutions $u \in \XX$ to~\eqref{eq:poisson} and $z \in \XX$ to~\eqref{eq:poisson-dual}.
Furthermore,~\eqref{eq:poisson-dual} allows to reconstruct the goal value by $G(u) = a(u,z) = F(z)$.

%%%%%%%%%%%%%%%%%%%%%%%%%%%%%%%%%%%%%%%%%%%%%%%%%%%%%%%%%%%%%%%%%%%%%%%%%%%%%%%%%%%
\subsection{Discretization}
%%%%%%%%%%%%%%%%%%%%%%%%%%%%%%%%%%%%%%%%%%%%%%%%%%%%%%%%%%%%%%%%%%%%%%%%%%%%%%%%%%%

For a given conforming triangulation $\TT_\coarse \in \T$ into simplices~\cite{stevenson2008}, we define the FEM space
\begin{equation*}
	\XX_\coarse
	=
	\SS_0^p(\TT_\coarse)
	:=
	\set[\big]{v_\coarse \in H^1_0(\Omega)}{v_\coarse|_T \text{ is a polynomial of degree $\leq p$ for all } T \in \TT_\coarse}.
\end{equation*}
For mesh-refinement, we employ newest vertex bisection \cite{stevenson2007,stevenson2008,bdd2004}, which satisfies~\eqref{assumption:refine1}--\eqref{assumption:refine2}.
Then, it holds that 
\begin{equation*}
    |T'| \leq |T|/2
    \quad \text{for all }
    \TT_\coarse \in \T,\,
    \TT_\fine \in \T(\TT_\coarse),\,
    T \in \TT_\coarse \backslash \TT_\fine
    \text{ and }
    T' \in \TT_\fine
    \text{ with }
    T'\subset T
\end{equation*}
and all triangulations are uniformly $\kappa$-shape regular for $\kappa > 0$ in the sense that
\begin{equation*}
	\max_{T \in \TT_\coarse} 
	\frac{ \text{diam}(T) }{ |T|^{1/d} }
	\le \kappa < \infty
	\quad \text{for all }
	\TT_\coarse \in \T,
\end{equation*}
which is why also~\eqref{assumption:refine3}--\eqref{assumption:refine5} hold.
The functions $u_\coarse$ and $z_\coarse$ are then obtained by discrete formulations of~\eqref{eq:poisson} and~\eqref{eq:poisson-dual}:
\begin{equation}
\label{eq:poisson-discrete}
\begin{split}
	\text{Find } u_\coarse, z_\coarse \in \XX_\coarse: \quad 
	a(u_\coarse, v_\coarse) &= F(v_\coarse), \\
	a(v_\coarse, z_\coarse) &= G(v_\coarse)
	\quad \text{for all } v_\coarse \in \XX_\coarse.
\end{split}
\end{equation}
Again, according to the Lax--Milgram lemma, there exists a unique solution $u_\coarse \in \XX_\coarse$ and $z_\coarse \in \XX_\coarse$, respectively.
We note that there holds the Galerkin orthogonality
\begin{equation}
\label{eq:galerkin-orthogonality}
	a(u-u_\coarse, v_\coarse)
	= 0 =
	a(v_\coarse, z-z_\coarse)
	\quad
	\text{for all } v_\coarse \in \XX_\coarse.
\end{equation}
Nestedness of the discrete function spaces, i.e., $\XX_\coarse \subseteq \XX_\fine$ if $\TT_\fine \in \T(\TT_\coarse)$, together with a C\'ea-type quasi-optimality result further imply that there holds \textsl{a priori} convergence~\eqref{eq:a_priori_konv}; see, e.g., the seminal work~\cite{bv1984}.

%%%%%%%%%%%%%%%%%%%%%%%%%%%%%%%%%%%%%%%%%%%%%%%%%%%%%%%%%%%%%%%%%%%%%%%%%%%%%%%%%%%
\subsection{Error estimators}
%%%%%%%%%%%%%%%%%%%%%%%%%%%%%%%%%%%%%%%%%%%%%%%%%%%%%%%%%%%%%%%%%%%%%%%%%%%%%%%%%%%

With $h_T := |T|^{1/d}$, local error indicators for~\eqref{eq:poisson-discrete} are given by 
\begin{equation}
\label{eq:poisson-estimators}
\begin{split}
	\mu_\coarse(T)^2
	&:=
	h_T^2 \, \norm{\Delta u_\coarse + f}_{L^2(T)}^2 + h_T \, \norm{\jump{\nabla u_\coarse \cdot \normalvec}}_{L^2(\partial T \cap \Omega)}^2,\\
	\nu_\coarse(T)^2
	&:=
	h_T^2 \, \norm{\Delta z_\coarse + g}_{L^2(T)}^2 + h_T \, \norm{\jump{\nabla z_\coarse \cdot \normalvec}}_{L^2(\partial T \cap \Omega)}^2.
\end{split}
\end{equation}

In the following we collect some basic properties of the presented error estimators~\cite{verfuerth}.
These are used to verify~\eqref{eq:goal-error-estimate} as well as~\eqref{assumption:ldeff}--\eqref{assumption:approx} and~\eqref{assumption:stab}--\eqref{assumption:red}.
\begin{proposition}[Reliability]\label{prop:poisson-rel}
	There exists a constant $\Crel > 0$ such that, for all $\TT_\coarse \in \T$, there holds
	\begin{equation}
	\label{eq:poisson-rel}
		\norm{\nabla (u - u_\coarse)}_{L^2(\Omega)}
		\leq
		\Crel \, \mu_\coarse
		\quad \text{and} \quad
		\norm{\nabla (z - z_\coarse)}_{L^2(\Omega)}
		\leq
		\Crel \, \nu_\coarse.
	\end{equation}
	The constant $\Crel$ depends only on $\kappa$-shape regularity.
\end{proposition}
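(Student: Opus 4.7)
The plan is to run the classical residual-based a posteriori argument for $H^1$-conforming FEM on the Poisson problem. I treat the primal bound first; the dual bound follows by applying the same steps to the adjoint problem, using the symmetry of $a$ and the second identity in~\eqref{eq:galerkin-orthogonality}.

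First, I set $e := u - u_\coarse \in \XX$ and start from $\|\nabla e\|_{L^2(\Omega)}^2 = a(e, e)$. By Galerkin orthogonality~\eqref{eq:galerkin-orthogonality}, the second argument may be replaced by $e - v_\coarse$ for any $v_\coarse \in \XX_\coarse = \SS_0^p(\TT_\coarse)$, and elementwise integration by parts on the resulting $a(u_\coarse, e - v_\coarse)$ yields the standard residual identity
\[
\|\nabla e\|_{L^2(\Omega)}^2 = \sum_{T \in \TT_\coarse} \int_T (f + \Delta u_\coarse)(e - v_\coarse) \d{x} - \sum_{T \in \TT_\coarse} \int_{\partial T \cap \Omega} \jump{\nabla u_\coarse \cdot \normalvec}\,(e - v_\coarse) \d{s},
\]
where contributions from $\partial\Omega$ drop because $e - v_\coarse \in H_0^1(\Omega)$.

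Second, I choose $v_\coarse$ to be a Scott--Zhang quasi-interpolant of $e$ into $\SS_0^p(\TT_\coarse)$, which is $H^1$-stable, preserves homogeneous Dirichlet conditions, and satisfies the local first-order approximation estimates
\[
\|e - v_\coarse\|_{L^2(T)} \leq C_1 \, h_T \, \|\nabla e\|_{L^2(\Omega_\coarse[T])}, \qquad \|e - v_\coarse\|_{L^2(\partial T \cap \Omega)} \leq C_1 \, h_T^{1/2} \, \|\nabla e\|_{L^2(\Omega_\coarse[T])},
\]
with $C_1$ depending only on $\kappa$-shape regularity and on $p$. Elementwise and facewise Cauchy--Schwarz then pairs $\|f + \Delta u_\coarse\|_{L^2(T)}$ with the $h_T$-weight and $\|\jump{\nabla u_\coarse \cdot \normalvec}\|_{L^2(\partial T \cap \Omega)}$ with the $h_T^{1/2}$-weight, precisely reconstructing the summands of $\mu_\coarse(T)^2$ from~\eqref{eq:poisson-estimators}. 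A final Cauchy--Schwarz over $T$, together with the finite patch overlap $\sum_T \|\nabla e\|_{L^2(\Omega_\coarse[T])}^2 \leq \Cpatch \, \|\nabla e\|_{L^2(\Omega)}^2$ guaranteed by~\eqref{assumption:refine4}, produces $\|\nabla e\|_{L^2(\Omega)}^2 \leq C_2 \, \mu_\coarse \, \|\nabla e\|_{L^2(\Omega)}$, and dividing by the energy norm gives the primal bound with $\Crel := C_2$.

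For the dual, the exact same scheme applies to $\tilde e := z - z_\coarse$: the symmetric Galerkin orthogonality in~\eqref{eq:galerkin-orthogonality} gives $\|\nabla \tilde e\|_{L^2(\Omega)}^2 = a(\tilde e - v_\coarse, \tilde e)$ for any $v_\coarse \in \XX_\coarse$, and elementwise integration by parts now transfers derivatives onto $z_\coarse$, producing $g + \Delta z_\coarse$ as the volume residual and jumps of $\nabla z_\coarse \cdot \normalvec$ across interior faces; the same Scott--Zhang estimate, applied to $\tilde e$ in place of $e$, closes the bound with $\nu_\coarse$ in place of $\mu_\coarse$. The only nontrivial ingredient beyond routine Cauchy--Schwarz is the Scott--Zhang construction with homogeneous trace preservation and the above local approximation bounds; this is classical and depends only on $\kappa$ and on the fixed polynomial degree $p$, so the final $\Crel$ has exactly the stated dependencies.
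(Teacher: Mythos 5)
Your argument is correct and is precisely the classical residual-based reliability proof that the paper does not spell out but instead cites from the standard reference (Verf\"urth's book): Galerkin orthogonality, elementwise integration by parts to produce the volume residual $f+\Delta u_\coarse$ and normal-jump terms, a Scott--Zhang (or Cl\'ement) quasi-interpolant with local first-order approximation and scaled trace estimates, Cauchy--Schwarz, and finite patch overlap. All the pieces are in the right place, including the treatment of the dual problem by symmetry of $a(\cdot,\cdot)$ with the second Galerkin orthogonality in~\eqref{eq:galerkin-orthogonality}.

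One small inconsistency: you state that the Scott--Zhang constant $C_1$ depends on $\kappa$ \emph{and} on the polynomial degree $p$, and then conclude that $\Crel$ has ``exactly the stated dependencies''---but the proposition asserts that $\Crel$ depends on $\kappa$ only. These are reconciled by a standard trick you did not make explicit: since Galerkin orthogonality holds against \emph{all} of $\XX_\coarse = \SS_0^p(\TT_\coarse)$, you are free to choose $v_\coarse$ as the quasi-interpolant of $e$ into the lowest-order subspace $\SS_0^1(\TT_\coarse)\subset\SS_0^p(\TT_\coarse)$, whose local approximation and stability constants are $p$-independent and controlled by $\kappa$ alone. With that choice, the final constant depends only on $\kappa$-shape regularity as claimed. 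This does not affect the correctness of your argument; it only tightens the tracking of constants to match the precise wording of the statement.
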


Note that, for local discrete efficiency, the mesh-refinement rule (NVB) has to be modified in some cases (see \cite{egp2020} for details):
\begin{itemize}
	\item For $p=1$, all $T \in \MM_\coarse$ have to be refined by three levels of bisection to guarantee the presence of an interior vertex on all edges of $T$ and inside inside $T$~\cite{mns2000}. Then, there holds $\T(\TT_\coarse) = \T(\TT_\coarse, \MM_\coarse, \rhodeff)$ for all $\TT_\coarse \in \T$ and all $1/4 \leq \rhodeff < 1$.
	
	\item For $p>1$, the presence of an interior vertex inside $T$ is not required and standard NVB with three bisections per element (where all edges of $T$ are bisected once) suffices.
	Then, there holds $\T(\TT_\coarse) = \T(\TT_\coarse, \MM_\coarse, \rhodeff)$ for all $\TT_\coarse \in \T$ and all $1/4 \leq \rhodeff < 1$.
\end{itemize}
If $T$ is marked for refinement (and hence appropriately refined), these interior vertices give rise to discrete edge bubble functions which  are of polynomial order $1$ for refined edges and to element bubble functions of polynomial order $\max\{p-1, 1\}$ for marked elements. Together with the usual bubble function technique, this allows to prove to the following proposition:

\begin{proposition}[Local discrete efficiency]\label{prop:poisson-ldeff}
	There exists a constant $\Cdeff > 0$ such that, for all $\TT_\coarse \in \T$ and all $\TT_\fine \in \T(\refine(\TT_\coarse,\{T\}))$, there holds
	\begin{equation}
	\label{eq:poisson-ldeff}
	\begin{split}
		\Cdeff^{-2} \eta_\coarse(T)^2
		&\leq
		\norm{\nabla (u_\fine - u_\coarse)}_{L^2( \Omega_\coarse [T] )}^2
		+ \osc_\coarse(\TT_\coarse[T], f)^2,\\
		\Cdeff^{-2} \zeta_\coarse(T)^2
		&\leq
		\norm{\nabla (z_\fine - z_\coarse)}_{L^2( \Omega_\coarse [T] )}^2
		+ \osc_\coarse(\TT_\coarse[T], g)^2,
	\end{split}
	\end{equation}
	where the oscillation terms are defined in terms of the $L^2(T)$-orthogonal projection $\Pi_T^q$ onto polynomials of degree of at most $q$ on $T$ (and usually $q = p-1$ to ensure higher-order oscillations) and can be bounded by
	\begin{equation*}
		\osc_\coarse(\TT_\coarse[T], D)^2
		:=
		\sum_{T' \in \TT[T]} |T'|^{2/d} \norm{(1-\Pi_T^{q})D}_{L^2(T)}^2
		\leq
		\Cpatch \Csize^{2/d} |T'|^{2/d} \, \norm{D}_{L^2(T)}^2.
	\end{equation*}
	The constant $\Cdeff$ depends only on $\kappa$-shape regularity and the polynomial degree $p$.
\end{proposition}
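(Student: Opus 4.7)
The plan is to follow the classical residual-based efficiency argument of Verfürth in the \emph{discrete} variant due to Morin--Nochetto--Siebert, noting that the assumption $\TT_\fine \in \T(\refine(\TT_\coarse,\{T\}))$ together with the modified NVB refinement (three bisections, plus an interior vertex in $T$ for $p=1$) guarantees the existence of suitable discrete bubble functions in $\XX_\fine \subseteq \SS_0^p(\TT_\fine)$: an element bubble $b_T \in \XX_\fine$ with $\supp b_T = T$ and an edge bubble $b_E \in \XX_\fine$ with $\supp b_E \subseteq \Omega_\coarse[T]$ for each interior edge/face $E \subset \partial T \cap \Omega$. Both satisfy the standard norm equivalences $\norm{v}_{L^2(T)}^2 \simeq \int_T v^2 b_T$ and $\norm{\psi}_{L^2(E)}^2 \simeq \int_E \psi^2 b_E$ (for $v$ polynomial of degree $\leq p-1$ on $T$ and $\psi$ polynomial on $E$) as well as the inverse estimate $\norm{\nabla(v b_T)}_{L^2(T)} \lesssim h_T^{-1}\norm{v b_T}_{L^2(T)}$, with constants depending only on $\kappa$ and $p$. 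By symmetry, I only discuss the estimate for $\eta_\coarse(T)$; the estimate for $\zeta_\coarse(T)$ is identical with $(u,f)$ replaced by $(z,g)$.

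For the \emph{volume residual} $R_T := \Delta u_\coarse + f$ on $T$, set $v := \Pi_T^q R_T \cdot b_T \in \XX_\fine$ (where $q = p-1$, so that $\Pi_T^q\Delta u_\coarse = \Delta u_\coarse$). Using the discrete Galerkin orthogonality $a(u_\fine - u_\coarse, v) = F(v) - a(u_\coarse, v)$, elementwise integration by parts on $T$, and the vanishing of $v$ on $\partial T$, I obtain
\begin{equation*}
\int_T (\Pi_T^q R_T)^2 b_T \d{x} = \int_T R_T \cdot \Pi_T^q R_T \, b_T \d{x} - \int_T (R_T - \Pi_T^q R_T) \Pi_T^q R_T \, b_T \d{x},
\end{equation*}
and the first term equals $\int_T \nabla(u_\fine-u_\coarse)\cdot \nabla v\d{x}$. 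Combining bubble norm equivalence, the inverse estimate, Cauchy--Schwarz, and $R_T - \Pi_T^q R_T = (1-\Pi_T^q) f$ then yields $h_T \norm{R_T}_{L^2(T)} \lesssim \norm{\nabla(u_\fine - u_\coarse)}_{L^2(T)} + h_T \norm{(1-\Pi_T^q) f}_{L^2(T)}$.

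For the \emph{jump residual} on an interior face $E \subset \partial T \cap \Omega$, write $J_E := \jump{\nabla u_\coarse \cdot \normalvec}$, let $\widetilde J_E$ be its polynomial extension to $\omega_E := \bigcup \TT_\coarse[E]$, and test with $w := \widetilde J_E \, b_E \in \XX_\fine$. Integration by parts elementwise on $\omega_E$ combined with Galerkin orthogonality gives
\begin{equation*}
\int_E J_E \widetilde J_E \, b_E \d{s} = \sum_{T' \in \TT_\coarse[E]} \int_{T'} R_{T'} w \d{x} - \int_{\omega_E} \nabla(u_\fine - u_\coarse)\cdot \nabla w \d{x}.
\end{equation*}
Using bubble norm equivalence on $E$, the scaling $\norm{w}_{L^2(T')} \lesssim h_E^{1/2}\norm{J_E}_{L^2(E)}$ and $\norm{\nabla w}_{L^2(T')}\lesssim h_E^{-1/2}\norm{J_E}_{L^2(E)}$ (these use $\kappa$-shape regularity and~\eqref{assumption:refine5}), and feeding in the volume-residual bound just derived for each $T' \in \TT_\coarse[E]$, I arrive at $h_T^{1/2}\norm{J_E}_{L^2(E)} \lesssim \norm{\nabla(u_\fine-u_\coarse)}_{L^2(\Omega_\coarse[T])} + \sum_{T' \in \TT_\coarse[T]} h_{T'}\norm{(1-\Pi_{T'}^q)f}_{L^2(T')}$. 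Summing both contributions over $T$ and the (at most $\Cpatch$) interior faces of $T$ and squaring yields the claim with the stated oscillation term; the bound $\osc_\coarse(\TT_\coarse[T],f)^2 \leq \Cpatch \Csize^{2/d} |T|^{2/d} \norm{f}_{L^2(T)}^2$ follows from $\norm{(1-\Pi_{T'}^q)f}_{L^2(T')} \leq \norm{f}_{L^2(T')}$ together with~\eqref{assumption:refine4}--\eqref{assumption:refine5}.

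The main obstacle is bookkeeping the discrete bubble construction: one must ensure $b_T, b_E \in \XX_\fine$, which is precisely why the two refinement cases (interior vertex for $p=1$ vs.\ pure face-bisection for $p>1$) are distinguished in the preceding paragraph. Once this is granted, the rest is a routine combination of norm equivalences, inverse estimates, and Galerkin orthogonality, and the constant $\Cdeff$ depends only on $\kappa$, $p$, $\Cpatch$, and $\Csize$, as asserted.
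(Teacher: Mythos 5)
The paper does not give a proof of this proposition in the text; it is stated as a known consequence of the (discrete) bubble-function technique, with references to \cite{egp2020,mns2000,verfuerth} and a one-sentence sketch about which discrete bubbles become available after the prescribed refinement of $T$. Your argument — element bubble combined with Galerkin orthogonality and inverse estimates for the volume residual, edge bubble plus the volume bound for the jump residual, assembled over $\TT_\coarse[T]$ with the patch-shape assumptions \eqref{assumption:refine4}--\eqref{assumption:refine5} — is precisely that standard route and is the one the paper intends, so the two approaches coincide.

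There is one subtlety worth flagging in your explicit test function $v := \Pi_T^{p-1} R_T \cdot b_T$. For $v$ to lie in $\XX_\fine = \SS_0^p(\TT_\fine)$ one needs $b_T$ to be a $\TT_\fine|_T$-piecewise polynomial of degree at most $p - (p-1) = 1$, i.e.\ essentially the hat function at an interior vertex of $\TT_\fine|_T$. The paper's sketch instead speaks of element bubbles of degree $\max\{p-1,1\}$; multiplying such a bubble by $\Pi_T^{p-1}R_T$ gives degree $2(p-1)>p$ once $p\geq3$, so those bubbles cannot be used as a multiplicative factor. You acknowledge the bookkeeping obstacle but leave it at that. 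For a fully rigorous version one should either \emph{(a)} argue that the three-bisection refinement of $T$ produces an interior vertex of $\TT_\fine|_T$, so that a piecewise \emph{linear} bubble vanishing on $\partial T$ exists and your $v$ is admissible for all $p$; or \emph{(b)} replace the explicit product by the Riesz-type argument that for any $R\in\mathcal P^{p-1}(T)$ there is $\psi\in\SS_0^p(\TT_\fine|_T)$ with $\psi|_{\partial T}=0$, $\int_T R\psi\simeq\norm{R}_{L^2(T)}^2$, and $\norm{\nabla\psi}_{L^2(T)}\lesssim h_T^{-1}\norm{R}_{L^2(T)}$, which is what the cited \cite{egp2020} in fact provides. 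With either fix the remainder of your proof — the jump bound, the summation over the patch, and the oscillation estimate — goes through unchanged, and the dependence of $\Cdeff$ only on $\kappa$ and $p$ is correct since $\Cpatch,\Csize$ are themselves controlled by $\kappa$ for NVB.
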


\begin{proposition}[Stability]\label{prop:poisson-stab}
	There exists a constant $\Cstab > 0$ such that, for all ${\TT_\coarse \in \T}$ and $\TT_\fine \in \T(\TT_\coarse)$ as well as $\UU_\coarse \subseteq \TT_\fine \cap \TT_\coarse$, there holds that
	\begin{equation}
	\begin{split}
	\label{eq:poisson-stab}
		\mu_\fine( \UU_\coarse)
		&\leq
		\mu_\coarse( \UU_\coarse ) + \Cstab \, \norm{\nabla (u_\fine - u_\coarse)}_{L^2(\Omega)}\\
		\nu_\fine( \UU_\coarse )
		&\leq
		\nu_\coarse( \UU_\coarse ) + \Cstab \, \norm{\nabla (z_\fine - z_\coarse)}_{L^2(\Omega)}.
	\end{split}
	\end{equation}
	The constant $\Cstab$ depends only on $\kappa$ and the polynomial degree $p$.
\end{proposition}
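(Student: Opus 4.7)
This is the classical stability argument for residual-based error estimators on non-refined elements, so the plan is to expand the indicator $\mu_\fine(T)$ for each $T \in \UU_\coarse$, insert $\pm u_\coarse$ in each residual, and use an inverse/trace estimate on the polynomial difference $u_\fine - u_\coarse$. Since $T \in \UU_\coarse \subseteq \TT_\coarse \cap \TT_\fine$ is not refined, $h_T$ is the same in both triangulations and $(u_\fine - u_\coarse)|_T$ is a polynomial of degree $\leq p$.

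First I would handle the volume term. By the triangle inequality in $L^2(T)$,
\[
h_T\,\norm{\Delta u_\fine + f}_{L^2(T)}
\leq h_T\,\norm{\Delta u_\coarse + f}_{L^2(T)} + h_T\,\norm{\Delta(u_\fine - u_\coarse)}_{L^2(T)}.
\]
Since $u_\fine - u_\coarse$ is a polynomial on $T$, the standard inverse estimate on shape-regular simplices yields $h_T \|\Delta(u_\fine - u_\coarse)\|_{L^2(T)} \lesssim \|\nabla(u_\fine - u_\coarse)\|_{L^2(T)}$ with a constant depending only on $\kappa$ and $p$.

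Next I would treat the jump term. Each edge (or face) $E \subset \partial T \cap \Omega$ of the coarse element $T$ is, in $\TT_\fine$, a disjoint union of finer sub-faces $E' \subset E$. Across any such $E'$, $u_\coarse$ is smooth on both sides (the sub-face lies inside a coarse edge or inside a coarse element), so $\jump{\nabla u_\coarse \cdot \normalvec}|_{E'}$ is exactly the restriction of the jump taken in $\TT_\coarse$; in particular $\|\jump{\nabla u_\coarse \cdot \normalvec}\|_{L^2(\partial T \cap \Omega)}$ is identical when computed with respect to $\TT_\coarse$ or $\TT_\fine$. Splitting
\[
h_T^{1/2}\,\norm{\jump{\nabla u_\fine \cdot \normalvec}}_{L^2(\partial T \cap \Omega)}
\leq h_T^{1/2}\,\norm{\jump{\nabla u_\coarse \cdot \normalvec}}_{L^2(\partial T \cap \Omega)} + h_T^{1/2}\,\norm{\jump{\nabla(u_\fine - u_\coarse) \cdot \normalvec}}_{L^2(\partial T \cap \Omega)},
\]
the second term is bounded, face-by-face, via the discrete trace inequality on each sub-face of the shape-regular fine mesh together with $h_{T'} \leq h_T$ and the uniformly bounded overlap~\eqref{assumption:refine4}, giving
\[
h_T^{1/2}\,\norm{\jump{\nabla(u_\fine - u_\coarse) \cdot \normalvec}}_{L^2(\partial T \cap \Omega)}
\lesssim \norm{\nabla(u_\fine - u_\coarse)}_{L^2(\Omega_\fine[T])}.
\]

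Combining these two bounds yields $\mu_\fine(T) \leq \mu_\coarse(T) + C\,\norm{\nabla(u_\fine - u_\coarse)}_{L^2(\Omega_\fine[T])}$ with $C$ depending only on $\kappa$ and $p$. Squaring, summing over $T \in \UU_\coarse$, invoking the Young-type $\ell^2$-triangle inequality $(a+b)^2 \leq \ldots$ (actually simpler: the $\ell^2$ triangle inequality $\bigl(\sum a_T^2\bigr)^{1/2} + \bigl(\sum b_T^2\bigr)^{1/2} \geq \bigl(\sum (a_T+b_T)^2\bigr)^{1/2}$), and using the finite overlap of patches $\{\Omega_\fine[T]\}_{T \in \UU_\coarse}$ (bounded by $\Cpatch$ in~\eqref{assumption:refine4}) to control $\sum_T \|\nabla(u_\fine - u_\coarse)\|_{L^2(\Omega_\fine[T])}^2 \lesssim \|\nabla(u_\fine - u_\coarse)\|_{L^2(\Omega)}^2$ gives~\eqref{eq:poisson-stab} for $\mu$. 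The argument for $\nu$ is identical with $(u,f)$ replaced by $(z,g)$.

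The one delicate step is the jump estimate: one has to be careful that the coarse-element boundary $\partial T \cap \Omega$ is the disjoint union of possibly many fine sub-faces on which $u_\fine - u_\coarse$ is a (piecewise) polynomial, so the discrete trace inequality must be applied sub-face by sub-face and the resulting bounds assembled without losing the correct $h_T$-scaling. Shape regularity and~\eqref{assumption:refine3}--\eqref{assumption:refine5} ensure that $h_{T'} \simeq h_T$ for all fine neighbors $T'$ of $T$ (since $T \in \TT_\fine \cap \TT_\coarse$ was not refined), which makes the constant depend only on $\kappa$ and $p$.
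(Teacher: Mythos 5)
Your proof is correct and is exactly the standard argument from the literature (triangle inequality followed by inverse and discrete trace estimates on the polynomial difference $u_\fine - u_\coarse$, then an $\ell^2$-triangle inequality and finite patch overlap); the paper itself does not spell out a proof but refers to \cite{verfuerth}, and what you write is what that reference (and the ``axioms of adaptivity'' literature) does. One small slip in the middle of the jump-term discussion: you invoke ``$h_{T'} \leq h_T$'', but the factor you need to absorb is $h_T^{1/2} h_{T'}^{-1/2}$, which requires the \emph{opposite} inequality $h_T \lesssim h_{T'}$ for the fine neighbours $T'$ of the (unrefined) element $T$; you state the needed two-sided comparison $h_{T'} \simeq h_T$ correctly at the end, and this indeed follows from \eqref{assumption:refine5} applied to $\TT_\fine$ (or, for NVB, directly from conformity and $\kappa$-shape regularity), so the argument goes through.
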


\begin{proposition}[Reduction]\label{prop:poisson-reduction}
	There exist constants $\qred \in (0,1)$ and $\Cred > 0$ such that, for all $\TT_\coarse \in \T$ and $\TT_\fine \in \T(\TT_\coarse)$, there holds that
	\begin{equation}
	\label{eq:poisson-reduction}
	\begin{split}
		\mu_\fine( \TT_\fine \backslash \TT_\coarse )^2
		&\leq
		\qred \, \mu_\coarse( \TT_\coarse \backslash \TT_\fine )^2 + \Cred^2 \, \norm{\nabla (u_\fine - u_\coarse)}_{L^2(\Omega)}^2,\\
		\nu_\fine( \TT_\fine \backslash \TT_\coarse )^2
		&\leq
		\qred \, \nu_\coarse( \TT_\coarse \backslash \TT_\fine )^2 + \Cred^2 \, \norm{\nabla (z_\fine - z_\coarse)}_{L^2(\Omega)}^2
	\end{split}
	\end{equation}
	The constant $\Cred$ depends only on $\kappa$, $p$, and the element-size reduction, which is $1/2$ for NVB.
\end{proposition}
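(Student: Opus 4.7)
The plan is to follow the classical reduction argument of Cascón--Kreuzer--Nochetto--Siebert for residual-type estimators (cf.~\cite{ckns2008,axioms}), which reduces the proof to two deterministic facts: the geometric size decrease under NVB and the vanishing of the jumps $\jump{\nabla u_\coarse \cdot \normalvec}$ across faces interior to a parent element. Since $\mu_\fine$ and $\nu_\fine$ have identical algebraic structure, I argue only for $\mu_\fine$; the same argument applies verbatim with $(u,f)$ replaced by $(z,g)$.

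First, I would fix $T' \in \TT_\fine \backslash \TT_\coarse$ and its unique ancestor $T \in \TT_\coarse \backslash \TT_\fine$ with $T' \subseteq T$. The NVB size reduction gives $|T'| \leq |T|/2$ and hence $h_{T'} \leq 2^{-1/d}\, h_T$. Writing $u_\fine = u_\coarse + (u_\fine - u_\coarse)$ in the definition~\eqref{eq:poisson-estimators} of $\mu_\fine(T')^2$ and applying Young's inequality with a free parameter $\delta > 0$ splits the estimator into a $u_\coarse$-part (still with mesh-size $h_{T'}$) and a remainder $R_\fine(T')^2$ depending only on $u_\fine - u_\coarse$. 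The central observation is that $u_\coarse|_T$ is a polynomial, so $\jump{\nabla u_\coarse \cdot \normalvec}$ vanishes on every face of $T'$ lying in the interior of $T$; hence only traces on $\partial T \cap \Omega$ contribute to the $u_\coarse$-part.

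Summing over all $T' \in \TT_\fine \backslash \TT_\coarse$ and extracting the uniform factor $h_{T'} \leq 2^{-1/d}\, h_T$ (the volume term actually yields $2^{-2/d}$, but the jump term fixes the worse factor $2^{-1/d}$), the $u_\coarse$-part collapses back to
\begin{equation*}
    (1+\delta)\, 2^{-1/d}\, \mu_\coarse(\TT_\coarse \backslash \TT_\fine)^2.
\end{equation*}
For the remainder, I would apply an elementwise inverse estimate to the volume part and the scaled discrete trace inequality to the jump part; both constants depend only on $\kappa$-shape regularity and the polynomial degree $p$, and summation (with overlap controlled by~\eqref{assumption:refine4}) yields $\sum_{T'} R_\fine(T')^2 \lesssim \norm{\nabla(u_\fine - u_\coarse)}_{L^2(\Omega)}^2$.

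Finally, fix any $\delta > 0$ with $\qred := (1+\delta)\, 2^{-1/d} < 1$ (possible since $2^{-1/d} < 1$), and read off $\Cred^2$ as $(1+\delta^{-1})$ times the combined inverse and trace constants. The main bookkeeping obstacle is ensuring that the faces of children of $T$ lying on $\partial T \cap \Omega$ are not double-counted when summing over $T'$; this is resolved by noting that $\{T' \in \TT_\fine : T' \subseteq T\}$ partitions $T$, and its boundary traces accordingly partition $\partial T \cap \Omega$.
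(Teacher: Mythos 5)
The paper does not carry out a proof of this proposition: Propositions~\ref{prop:poisson-rel}--\ref{prop:poisson-reduction} are stated as known facts for residual estimators with a pointer to the standard literature, and reduction in particular is the textbook argument from Cascón--Kreuzer--Nochetto--Siebert that later became one of the \enquote{axioms}. Your proof reproduces exactly that argument: Young's inequality, the cancellation of $\jump{\nabla u_\coarse\cdot\normalvec}$ on faces interior to a parent element, the NVB size reduction $h_{T'}\le 2^{-1/d}h_T$ absorbed into $\qred=(1+\delta)2^{-1/d}<1$, and inverse plus discrete trace estimates for the $u_\fine-u_\coarse$ remainder. Your bookkeeping remark about $\partial T\cap\Omega$ is correctly resolved; the only point to be pedantic about is that a face shared by two refined coarse elements appears twice in $\mu_\coarse(\TT_\coarse\backslash\TT_\fine)^2$ and twice again on the fine side, so the coefficients match face by face and no factor is lost. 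So the proof is correct and is precisely the argument the paper tacitly references.
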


\begin{remark}
	If we set $S(r,t) := \max \{ \Cstab, \Cred \} \, \max \{r, t\}$, Propositions~\ref{prop:poisson-ldeff}--\ref{prop:poisson-reduction} together with \textsl{a priori} convergence~\eqref{eq:a_priori_konv} show that the error estimators $\mu_\coarse$ and $\nu_\coarse$ satisfy~\eqref{assumption:ldeff}--\eqref{assumption:approx} as well as ~\eqref{assumption:stab}--\eqref{assumption:red}.
	Furthermore, Galerkin orthogonality~\eqref{eq:galerkin-orthogonality} and Proposition~\ref{prop:poisson-rel} show that
	\begin{equation}
	\label{eq:poisson-goal-error}
	\begin{split}
		|G(u) - G(u_\coarse)|
		&\eqreff*{eq:poisson-goal}{=}
		a(u-u_\coarse,z)
		\eqreff*{eq:galerkin-orthogonality}{=}
		a(u-u_\coarse,z-z_\coarse)\\
		&\leq
		\norm{\nabla (u-u_\coarse)}_{L^2(\Omega)} \, \norm{\nabla (z-z_\coarse)}_{L^2(\Omega)}
		\eqreff{eq:poisson-rel}{\leq}
		\Crel^2 \, \mu_\coarse \, \nu_\coarse,
	\end{split}
	\end{equation}
	i.e., there holds~\eqref{eq:goal-error-estimate} with $\eta_\coarse := \mu_\coarse$ and $\zeta_\coarse := \nu_\coarse$.
	This motivates how the abstract settings from Section~\ref{sec:setting} relate to the present Poisson problem.
\end{remark}

%%%%%%%%%%%%%%%%%%%%%%%%%%%%%%%%%%%%%%%%%%%%%%%%%%%%%%%%%%%%%%%%%%%%%%%%%%%%%%%%%%%
\subsection{Quadratic goal functional}\label{subsec:quadratic goal}
%%%%%%%%%%%%%%%%%%%%%%%%%%%%%%%%%%%%%%%%%%%%%%%%%%%%%%%%%%%%%%%%%%%%%%%%%%%%%%%%%%%

Consider the quadratic goal value
\begin{equation}
\label{eq:quadratic-goal}
    G(u) := \int_\Omega g \, u^2 \d{x}
    \quad
    \text{with } g \in L^2(\Omega).
\end{equation}
The linearized dual problem for this goal functional and its discretization read:
\begin{align}
\label{eq:quadratic-dual}
	\text{Find } z \in \XX: \quad 
	a(v,z)
	&=  \scalarproduct{2g \, u}{v}_{L^2(\Omega)}
	\quad \text{for all } v \in \XX,\\
\nonumber
	\text{find } z_\coarse \in \XX_\coarse: \quad 
	a(v_\coarse,z_\coarse)
	&= \scalarproduct{2g \, u_\coarse}{v_\coarse}_{L^2(\Omega)}
	\quad \text{for all } v_\coarse \in \XX_\coarse.
\end{align}
Using this notion of dual problem, \cite{bip2021} shows the \textsl{a priori} goal error estimate
\begin{equation*}
\label{eq:quadratic-goal-error}
	|G(u) - G(u_\coarse)|
	\lesssim
	\norm{\nabla (u - u_\coarse)}_{L^2(\Omega)} \norm{\nabla (z - z_\coarse)}_{L^2(\Omega)}
			+ \norm{\nabla (u - u_\coarse)}_{L^2(\Omega)}^2.
\end{equation*}
This can be further estimated by reliability~\eqref{eq:poisson-rel} to obtain
\begin{equation*}
		|G(u) - G(u_\coarse)|
		\eqreff{eq:poisson-rel}{\lesssim}
		\eta_\coarse \, \zeta_\coarse + \eta_\coarse^2
		=
		\eta_\coarse \bigl( \zeta_\coarse + \eta_\coarse \bigr)
		\simeq
		\eta_\coarse \bigl( \zeta_\coarse^2 + \eta_\coarse^2 \bigr)^{1/2}.
\end{equation*}
Using $\eta_\coarse := \mu_\coarse$ and $\zeta_\coarse := (\mu_\coarse^2 + \nu_\coarse^2)^{1/2}$, this proves~\eqref{eq:goal-error-estimate}.
Furthermore, we note that $\zeta_\coarse$ also satisfies~\eqref{assumption:stab}--\eqref{assumption:red}; see~\cite{bip2021}.

\begin{remark}
	This choice of $\eta_\coarse$ and $\zeta_\coarse$ also appears for nonlinear primal problem; see, e.g., \cite{bbi+2022} for GOAFEM for semilinear PDEs.
\end{remark}

\begin{remark}
	By using that
	\begin{equation*}
		\mu_\coarse \, \nu_\coarse
		\leq
		\tfrac{1}{2} \, (\mu_\coarse^2 + \nu_\coarse^2)
		\quad \text{as well as} \quad
		\mu_\coarse \, (\mu_\coarse^2 + \nu_\coarse^2)^{1/2}
		\leq
		\mu_\coarse^2 + \nu_\coarse^2,
	\end{equation*}
	we see that the goal error of both linear and quadratic goal can also be estimated by
	\begin{equation*}
		|G(u) - G(u_\coarse)|
		\lesssim
		\mu_\coarse^2 + \nu_\coarse^2.
	\end{equation*}
	Thus, an algorithm for both linear and quadratic goal can also be realized by the goal error estimate~\eqref{eq:goal-error-estimate} with $\eta_\coarse := \zeta_\coarse := (\mu_\coarse^2 + \nu_\coarse^2)^{1/2}$.
\end{remark}

%%%%%%%%%%%%%%%%%%%%%%%%%%%%%%%%%%%%%%%%%%%%%%%%%%%%%%%%%%%%%%%%%%%%%%%%%%%%%%%%%%%
\subsection{Marking for the model problem}
%%%%%%%%%%%%%%%%%%%%%%%%%%%%%%%%%%%%%%%%%%%%%%%%%%%%%%%%%%%%%%%%%%%%%%%%%%%%%%%%%%%

For the Poisson problem with linear goal, \cite{ms2009} suggests to find sets $\MM_{\ell}^\eta$ and $\MM_{\ell}^\zeta$ such that
\begin{equation*}
	\theta \, \mu_\ell^2
	\leq
	\mu_\ell(\MM_{\ell}^\eta)^2
	\quad \text{and} \quad
	\theta \, \nu_\ell^2
	\leq
	\nu_\ell(\MM_{\ell}^\zeta)^2
\end{equation*}
and then choose $\MM_\ell$ as the smaller of the two.
This is covered by both our Marking Strategies~\ref{alg:mark_a} and~\ref{alg:mark_b} with $\eta_\ell := \mu_\ell$ and $\zeta_\ell := \nu_\ell$; see Remark~\ref{rem:mark_a_stronger} and~\cite{bet2011}.

For the quadratic goal, \cite{bip2021} suggests two marking stratgies:
The first one defines the weighted refinement indicator
\begin{equation*}
	\rho_\ell(T)^2
	:=
	\mu_\ell(T)^2 \big(\mu_\ell^2 + \nu_\ell^2 \big) + \mu_\ell^2 \big(\mu_\ell(T)^2 + \nu_\ell(T)^2 \big)
\end{equation*}
and obtains the set of marked elements $\MM_\ell$ by Dörfler marking~\eqref{eq:doerfler}.
This is covered by both our Marking Strategies~\ref{alg:mark_a} and~\ref{alg:mark_b} with $\eta_\ell := \mu_\ell$ and $\zeta_\ell:= \big( \mu_\ell^2 + \nu_\ell^2)^{1/2}$.
The second strategy presented determines the set $\MM_\ell$ by the modified Dörfler marking
\begin{equation*}
	\theta \, \big( \mu_\ell^2 + \nu_\ell^2 \big)
	\leq
	\mu_\ell(\MM_\ell)^2 + \nu_\ell(\MM_\ell)^2.
\end{equation*}
This is also covered by our Marking Strategies~\ref{alg:mark_a} and \ref{alg:mark_b} with $\eta_\ell := \zeta_\ell := \big( \mu_\ell^2 + \nu_\ell^2)^{1/2}$.

%%%%%%%%%%%%%%%%%%%%%%%%%%%%%%%%%%%%%%%%%%%%%%%%%%%%%%%%%%%%%%%%%%%%%%%%%%%%%%%%%%%
\subsection{GOAFEM with inexact solver}
%%%%%%%%%%%%%%%%%%%%%%%%%%%%%%%%%%%%%%%%%%%%%%%%%%%%%%%%%%%%%%%%%%%%%%%%%%%%%%%%%%%

Consider the case that the discrete solutions to~\eqref{eq:poisson-discrete} are computed by an iterative solver.
Suppose that one step of the solver on a mesh $\TT_\coarse \in \T$ is denoted by an operator $\Phi_\coarse^u, \Phi_\coarse^z \colon \XX_\coarse \to \XX_\coarse$ for the primal and dual problem, respectively, and that this solver is contractive, i.e., with the exact solutions $u_\coarse^\star, z_\coarse^\star$ of~\eqref{eq:poisson-discrete}, there exists a constant $0 < \kappa < 1$ such that, for all $v_\coarse \in \XX_\coarse$,
\begin{equation}
\label{eq:contractive-solver}
	\norm{u_\coarse^\star - \Phi_\coarse^u(v_\coarse)}_\XX
	\leq
	\kappa \, \norm{u_\coarse^\star - v_\coarse}_\XX
	\quad \text{and} \quad
	\norm{z_\coarse^\star - \Phi_\coarse^z(v_\coarse)}_\XX
	\leq
	\kappa \, \norm{z_\coarse^\star - v_\coarse}_\XX.
\end{equation}
Possible examples include the optimally preconditioned CG method (see \cite{cnx2012,ghps2021} for PCG and adaptive FEM) or geometric multigrid solvers (see \cite{wz2017,imps2022} in the frame of adaptive FEM).
On every level of the adaptive algorithm, we employ $k \geq 1$ steps of the iterative solver to obtain $u_\ell := (\Phi_\ell^u)^k(u_{\ell-1})$ and $z_\ell := (\Phi_\ell^z)^k(z_{\ell-1})$ with $u_{-1} = 0 = z_{-1}$, where $k = k(\ell)$ may even vary with $\ell$ (and could even differ for $u$ and $z$, respectively).

This setting is considered, e.g., in~\cite{fp2020,bgip2021}, and \cite{bgip2021} shows that the residual error estimators $\eta_\ell := \mu_\ell$ and $\zeta_\ell := \nu_\ell$ from~\eqref{eq:poisson-estimators} satisfy~\eqref{assumption:stab}--\eqref{assumption:red}.
Furthermore, contraction~\eqref{eq:contractive-solver} shows \textsl{a priori} convergence for the inexact solutions:
Since there holds \textsl{a priori} convergence~\eqref{eq:a_priori_konv} for the exact solutions $u_\ell^\star$, we have that
\begin{equation*}
	\norm{u^\star_{\ell+1} - u_{\ell+1}}_\XX
	\eqreff*{eq:contractive-solver}{\leq}
	\kappa^k \, \norm{u^\star_\ell - u_\ell}_\XX + \kappa^k \, \norm{u^\star_{\ell+1} - u^\star_\ell}_\XX
	\leq
	\kappa \, \norm{u^\star_\ell - u_\ell}_\XX + \norm{u^\star_{\ell+1} - u^\star_\ell}_\XX
\end{equation*}
is a contraction up to a null sequence.
Therefore, it follows that $\norm{u^\star_{\ell} - u_{\ell}}_\XX \to 0$ as $\ell \to \infty$; see Lemma~\ref{lemma:folgenkonv} below.
Thus, \textsl{a priori} convergence~\eqref{eq:a_priori_konv} for the exact solutions yields
\begin{equation*}
	\norm{u_\infty - u_{\ell}}_\XX
	\leq
	\norm{u_\infty - u^\star_{\ell}}_\XX + \norm{u^\star_{\ell} - u_{\ell}}_\XX
	\to 0
	\quad \text{as } \ell \to \infty;
\end{equation*}
the same argument shows $\norm{z_\infty - z_{\ell}}_\XX \to 0$ for the inexact dual solutions.
Finally, the goal error estimate reads (note that Galerkin orthogonality~\eqref{eq:galerkin-orthogonality} does not hold for inexact solutions)
\begin{align*}
	G(u) - G(u_\ell)
	=
	a(u - u_\ell, z)
	=
	a(u - u_\ell, z - z_\ell) + F(z_\ell) - a(u_\ell,z_\ell).
\end{align*}
With the discrete goal quantity $G_\ell := G(u_\ell) + F(z_\ell) - a(u_\ell,z_\ell)$, we obtain the goal error estimate~\cite{bgip2021}
\begin{equation*}
	|G(u) - G_\ell|
	\lesssim
	\big[ \eta_\ell +  \norm{u_\ell^\star - u_\ell}_\XX \big] \,
	\big[ \zeta_\ell +  \norm{z_\ell^\star - z_\ell}_\XX \big],
\end{equation*}
from which convergence of the goal error follows with \textsl{a priori} convergence of the inexact solutions (see above) as well as convergence of the estimator product $\eta_\ell \, \zeta_\ell$ and boundedness of the estimators (see Sections~\ref{sec:results}--\ref{sec:proofB} below).
The works~\cite{ghps2021,bgip2021} further present criteria to stop the iterative solver that allow to dominate the solver error $\norm{u_\ell^\star - u_\ell}_\XX$ by the estimator $\eta_\ell$ (and analogously for the dual problem); thus, one recovers the goal error estimate~\eqref{eq:goal-error-estimate}.

\begin{remark}
\label{rem:bem-fractional}
	Note that the explanatory example in this section, as well as the applied examples in Section~\ref{sec:examples}, deals only with finite element discretizations of (non-)linear second order elliptic PDEs.
	Indeed the motivation behind the subsequent analysis originates in conforming finite element methods, where adaptivity is driven by residual-type \textsl{a posteriori} error estimators.
	
	However, our analysis is more general than the presented examples suggest.
	In particular, it also covers the convergence analysis for point errors in adaptive boundary element methods~\cite{fgh+2016} and goal oriented adaptive FEM for problems featuring fractional diffusion; see, e.g., \cite{fmp2021} for the first convergence result for standard adaptive FEM for the fractional Laplacian.
\end{remark}

%\clearpage
%!TEX root = main.tex

%%%%%%%%%%%%%%%%%%%%%%%%%%%%%%%%%%%%%%%%%%%%%%%%%%%%%%%%%%%%%%%%%%%%%%%%%%%%%%%%%%%
%%%%%%%%%%%%%%%%%%%%%%%%%%%%%%%%%%%%%%%%%%%%%%%%%%%%%%%%%%%%%%%%%%%%%%%%%%%%%%%%%%%
\section{Main results} \label{sec:results}
%%%%%%%%%%%%%%%%%%%%%%%%%%%%%%%%%%%%%%%%%%%%%%%%%%%%%%%%%%%%%%%%%%%%%%%%%%%%%%%%%%%
%%%%%%%%%%%%%%%%%%%%%%%%%%%%%%%%%%%%%%%%%%%%%%%%%%%%%%%%%%%%%%%%%%%%%%%%%%%%%%%%%%%

Our main results govern the (plain) convergence of Algorithm~\ref{alg:goafem} in both presented settings with Marking Strategies~\ref{alg:mark_a}--\ref{alg:mark_b} .

%%%%%%%%%%%%%%%%%%%%%%%%%%%%%%%%%%%%%%%%%%%%%%%%%%%%%%%%%%%%%%%%%%%%%%%%%%%%%%%%%%%
\subsection{Convergence in Setting I} \label{subsec:resultsA}
%%%%%%%%%%%%%%%%%%%%%%%%%%%%%%%%%%%%%%%%%%%%%%%%%%%%%%%%%%%%%%%%%%%%%%%%%%%%%%%%%%%

Our first main result states plain convergence in Setting I with Marking Strategy~\ref{alg:mark_a}.
This is an extension of the results of~\cite{msv2008} from standard AFEM to GOAFEM.

\begin{theorem}
	\label{thm:msv-result}
	Consider the output of Algorithm~\ref{alg:goafem} using Marking Strategy~\ref{alg:mark_a} or Marking Strategy~\ref{alg:mark_b}.
	Suppose that the refinement strategy satisfies~\eqref{assumption:refine1}--\eqref{assumption:refine5}.
	Suppose further that $\XX$ and $\DD$ are equipped with seminorms in the sense of~\eqref{assumption:norm1}--\eqref{assumption:norm3}.
	Finally, suppose that here holds local discrete efficiency as well as the natural approximation property~\eqref{assumption:ldeff}--\eqref{assumption:approx}.
	Then, \textsl{a priori} convergence~\eqref{eq:a_priori_konv} of $(u_\ell)_{\ell\in\N_0}$ and $(z_\ell)_{\ell\in\N_0}$ yields convergence
	\begin{equation*}
		\eta_\ell \, \zeta_\ell \to 0
		\quad
		\text{as } \ell \to \infty.
	\end{equation*}
\end{theorem}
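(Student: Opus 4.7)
By Remark~\ref{rem:marking-strategies}(i), Marking Strategy~\ref{alg:mark_b} implies Marking Strategy~\ref{alg:mark_a}, so it suffices to treat~\ref{alg:mark_a}. The plan is to adapt the plain-convergence framework of~\cite{msv2008} for standard AFEM to the goal-oriented setting by splitting the mesh into a \emph{refined} and a \emph{persistent} part and combining estimator vanishing on the former with the marking condition~\eqref{eq:mark_a} on the latter.

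I partition $\TT_\ell = \TTgood_\ell \cup \TTbad_\ell$, where
\begin{equation*}
	\TTbad_\ell
	:=
	\TT_\ell \cap \bigcup_{k \geq 0} \bigcap_{j \geq k} \TT_j
\end{equation*}
collects the elements that are eventually never refined, while $\TTgood_\ell := \TT_\ell \setminus \TTbad_\ell$ contains those refined at some later level. Since marked elements are refined by~\eqref{assumption:refine1}, $\MM_\ell \subseteq \TT_\ell \setminus \TT_{\ell+1} \subseteq \TTgood_\ell$; hence $\TTbad_\ell \subseteq \TT_\ell \setminus \MM_\ell$.

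The core estimate is $\eta_\ell(\TTgood_\ell) + \zeta_\ell(\TTgood_\ell) \to 0$ as $\ell \to \infty$. For each $T \in \TTgood_\ell$ I invoke local discrete efficiency~\eqref{assumption:ldeff} with a suitable $\TT_\fine \in \T(\TT_\ell, \{T\}, \rhodeff)$, using~\eqref{assumption:refine3} to iterate refinement until the patch of $T$ is sufficiently reduced (when persistent neighbors obstruct this directly, the comparison mesh supplied by the approximation property~\eqref{assumption:approx} plays the role of $\TT_\fine$). Summing with the bounded patch overlap~\eqref{assumption:refine4} and subadditivity~\eqref{assumption:norm1} yields
\begin{equation*}
	\eta_\ell(\TTgood_\ell)^2
	\lesssim
	\norm{u_\fine - u_\ell}_\XX^2 + \sum_{T \in \TTgood_\ell} \osc^\eta_\ell(\TT_\ell[T])^2,
\end{equation*}
and analogously for $\zeta$. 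The first term vanishes by a priori convergence~\eqref{eq:a_priori_konv}, while the oscillation sum vanishes by a dominated-convergence argument using~\eqref{assumption:osc} together with the pointwise shrinking $|T| \to 0$ on the refined region and the absolute continuity~\eqref{assumption:norm2}--\eqref{assumption:norm3}. Applying Marking Strategy~\ref{alg:mark_a} to the inclusion $\TTbad_\ell \subseteq \TT_\ell \setminus \MM_\ell$, using $\eta_\ell(\MM_\ell) \leq \eta_\ell(\TTgood_\ell) \to 0$ (and likewise for $\zeta$), and continuity of $V$ at $(0,0)$ then gives
\begin{equation*}
	\big[\max_{T \in \TTbad_\ell} \eta_\ell(T)\big]
	\big[\max_{T \in \TTbad_\ell} \zeta_\ell(T)\big]
	\to 0.
\end{equation*}

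Finally, decomposing $\eta_\ell \zeta_\ell$ according to $\TTgood_\ell \cup \TTbad_\ell$ and using uniform boundedness $\eta_\ell + \zeta_\ell \leq \Cest$ (see Lemma~\ref{lemma:msv-estimators-bounded}), all cross-terms involving $\TTgood_\ell$ vanish, leaving only the persistent-part product $\eta_\ell(\TTbad_\ell) \zeta_\ell(\TTbad_\ell)$. I expect this to be the main obstacle: since $\#\TTbad_\ell$ may grow unboundedly, the naive bound via the max-product from the previous display does not suffice. I anticipate closing this gap by reapplying local discrete efficiency~\eqref{assumption:ldeff} on elements of $\TTbad_\ell$ with the comparison refinement from~\eqref{assumption:approx}, thereby converting the bound via the subadditive seminorms~\eqref{assumption:norm1} and absolute continuity~\eqref{assumption:norm2} into the desired $\ell^2$-type vanishing rather than a mere max-type one.
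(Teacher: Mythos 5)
Your high-level plan (split $\TT_\ell$ into persistent and non-persistent parts, control the non-persistent part by efficiency, control the persistent part by the marking condition, then recombine) matches the spirit of the paper's proof, but there are two places where your argument does not close, and at both the paper does something genuinely different that you have not supplied.

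First, the two-way split $\TT_\ell = \TTgood_\ell \cup \TTbad_\ell$ (with your $\TTbad_\ell = \TT_\ell \cap \TT_\infty$) is too coarse to run the efficiency argument. For a refined element $T \in \TTgood_\ell$ one of whose patch neighbours is persistent, discrete local efficiency~\eqref{assumption:ldeff} \emph{along the adaptive sequence} requires $\TT_{\ell'} \in \T(\TT_\ell, \{T\}, \rhodeff)$, i.e.\ the entire patch of $T$ must shrink, which does not happen here. Your fallback --- using the comparison mesh $\TT_\fine$ from~\eqref{assumption:approx} as the fine mesh in~\eqref{assumption:ldeff} and declaring the resulting term $\norm{u_\fine - u_\ell}_\XX$ to vanish by \textsl{a priori} convergence --- is not valid: \textsl{a priori} convergence~\eqref{eq:a_priori_konv} only yields $u_\ell \to u_\infty$, not $u_\ell \to u$, so with $u_\fine$ close to $u$ this difference tends to something near $\norm{u - u_\infty}_\XX$, which need not be zero (Setting~I never assumes reliability). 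The paper resolves this with a \emph{three-way} split $\TT_\ell = \TTgood_\ell \cup \TTugly_\ell \cup \TTbad_\ell$ (good: whole patch eventually shrinks; bad: whole patch persistent; ugly: neither), treating the good part via discrete local efficiency with $\TT_{\ell'}$ and \textsl{a priori} convergence, and treating the ugly part via \emph{local efficiency with the exact solution} (Proposition~\ref{prop:local-efficiency}(i)) together with the absolute continuity~\eqref{assumption:norm2} and the observation that the union of ugly patches has shrinking measure.

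Second, you correctly identify $\eta_\ell(\TTbad_\ell)\,\zeta_\ell(\TTbad_\ell) \to 0$ as the main obstacle, and you correctly note that a naive $\#\TTbad_\ell$-times-max estimate fails, but your proposed fix --- ``reapplying local discrete efficiency on elements of $\TTbad_\ell$ with the comparison refinement from~\eqref{assumption:approx}'' --- cannot work: those elements are never refined, $v_\ell$ converges to $v_\infty$ rather than $v$, and absolute continuity is useless there because $\Omegabad$ does not shrink. The decisive ingredient in the paper's Lemma~\ref{lemma:msv-bad-convergence} is an \emph{integral representation}: writing
\begin{equation*}
  \eta_\ell(\TTbad_\ell)^2 \, \zeta_\ell(\TTbad_\ell)^2
  = \int_{\Omegabad}\int_{\Omegabad} f_\ell(x,y)\, \mathrm{d}x\, \mathrm{d}y ,
  \qquad
  f_\ell(x,y) := \frac{\eta_\ell(T_x)^2}{|T_x|}\,\frac{\zeta_\ell(T_y)^2}{|T_y|},
\end{equation*}
then showing $f_\ell \to 0$ pointwise a.e.\ via $\MM_\ell \subseteq \TTgood_\ell \cup \TTugly_\ell$, the vanishing $\eta_\ell(\MM_\ell),\zeta_\ell(\MM_\ell) \to 0$, the marking condition~\eqref{eq:mark_a}, and continuity of $V$ at $(0,0)$, and finally concluding by dominated convergence with the integrable majorant coming from Lemma~\ref{lemma:msv-estimators-bounded}. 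Nothing in your proposal reproduces this mechanism, so the proof as written does not go through.
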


%%%%%%%%%%%%%%%%%%%%%%%%%%%%%%%%%%%%%%%%%%%%%%%%%%%%%%%%%%%%%%%%%%%%%%%%%%%%%%%%%%%
\subsection{Convergence in Setting II} \label{subsec:resultsB}
%%%%%%%%%%%%%%%%%%%%%%%%%%%%%%%%%%%%%%%%%%%%%%%%%%%%%%%%%%%%%%%%%%%%%%%%%%%%%%%%%%%

The next results concern plain convergence in Setting II with both presented marking strategies.
The first one deals with Marking Strategy~\ref{alg:mark_a} and transfers~\cite[Theorem~3.1]{gp2021} from standard AFEM to GOAFEM.

\begin{theorem}\label{thm:goafem_a}
	Consider the output of Algorithm~\ref{alg:goafem} using Marking Strategy~\ref{alg:mark_a} or Marking Strategy~\ref{alg:mark_b}.
	Suppose that the error estimators $(\eta_\ell)_{\ell\in\N_0}$ and $(\zeta_\ell)_{\ell\in\N_0}$ satisfy stability~\eqref{assumption:stab} and reduction~\eqref{assumption:red}.
	Suppose further that the refinement strategy satisfies~\eqref{assumption:refine1}--\eqref{assumption:refine2}.
	%, that $M$ satisfies~\eqref{eq:M-assumptions} and $V$ is continuous at $(0,0)$ with $V(0,0) = 0$.
	Then, \textsl{a priori} convergence~\eqref{eq:a_priori_konv} of $(u_\ell)_{\ell\in\N_0}$ and $(z_\ell)_{\ell\in\N_0}$ yields convergence
	\begin{equation*}
		\eta_\ell \, \zeta_\ell \to 0
		\quad
		\text{as } \ell \to \infty.
	\end{equation*}
\end{theorem}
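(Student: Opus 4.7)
I would adapt the plain-convergence analysis of~\cite{gp2021} for standard AFEM under the structural properties~\eqref{assumption:stab}--\eqref{assumption:red} to the goal-oriented setting, the new difficulty being that the target quantity is the \emph{product} $\eta_\ell\zeta_\ell$ rather than a single estimator. The preparatory facts needed are: (a) uniform boundedness $\sup_{\ell \in \N_0}(\eta_\ell + \zeta_\ell) \leq \Cest < \infty$, obtained by iterating~\eqref{assumption:stab}--\eqref{assumption:red} and exploiting that the a priori convergent sequences $(u_\ell), (z_\ell)$ are bounded in $\XX$ while $S$ is bounded on compact subsets of $\Rge^2$; and (b) vanishing of the perturbation $S_\ell := S(\norm{u_{\ell+1}-u_\ell}_\XX, \norm{z_{\ell+1}-z_\ell}_\XX) \to 0$, which follows from~\eqref{eq:a_priori_konv} together with continuity of $S$ at the origin.

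The central technical step is the key lemma $\eta_\ell(\MM_\ell), \zeta_\ell(\MM_\ell) \to 0$ as $\ell \to \infty$. For $\xi \in \{\eta, \zeta\}$, I would combine~\eqref{assumption:stab} in squared form (using the uniform bound $\xi_\ell \leq \Cest$ via Young's inequality) with~\eqref{assumption:red} to obtain the perturbed monotonicity
\begin{equation*}
	\xi_{\ell+1}^2 \leq \xi_\ell^2 - (1-\qred)\, \xi_\ell(\TT_\ell \setminus \TT_{\ell+1})^2 + 2 \Cest\, S_\ell + 2\, S_\ell^2.
\end{equation*}
Since~\eqref{assumption:refine1} gives $\MM_\ell \subseteq \TT_\ell \setminus \TT_{\ell+1}$, a subsequence-based contradiction argument in the spirit of~\cite{gp2021} then yields $\xi_\ell(\MM_\ell) \to 0$: assuming $\xi_{\ell_k}(\MM_{\ell_k}) \geq \varepsilon$ infinitely often, the strictly positive per-step decrement $(1-\qred)\varepsilon^2$ along this subsequence eventually dominates the vanishing perturbation and contradicts the nonnegativity and boundedness of $\xi_\ell^2$.

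For the conclusion, I would first treat Marking Strategy~\ref{alg:mark_b}: the bound~\eqref{eq:C-assumptions} applied to~\eqref{eq:mark_b} ensures that at each level at least one of
\begin{equation*}
	\eta_\ell^2 \leq (C_W/\theta)\, \eta_\ell(\MM_\ell)^2 \qquad \text{or} \qquad \zeta_\ell^2 \leq (C_W/\theta)\, \zeta_\ell(\MM_\ell)^2
\end{equation*}
holds. Splitting $\N_0 = L_\eta \cup L_\zeta$ accordingly, the key lemma yields $\eta_\ell \to 0$ along $L_\eta$ and $\zeta_\ell \to 0$ along $L_\zeta$; together with uniform boundedness of the respective other estimator, this gives $\eta_\ell\zeta_\ell \to 0$ on each subset and hence on all of $\N_0$. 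For Marking Strategy~\ref{alg:mark_a}, continuity of $V$ at $(0,0)$ together with the key lemma implies $V(\eta_\ell(\MM_\ell), \zeta_\ell(\MM_\ell)) \to 0$; the bound~\eqref{eq:mark_a} and a case distinction on the dominating factor in the max-product on its left-hand side, together with~\eqref{assumption:stab}--\eqref{assumption:red}, then yield $\eta_\ell\zeta_\ell \to 0$.

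The main obstacle is the key lemma: classical estimator-reduction arguments (as in~\cite{ckns2008}) rely on summability $\sum_\ell S_\ell^2 < \infty$ obtained from Galerkin-type Pythagoras identities, which is \emph{not} available here from the sole hypothesis of a priori convergence. The subsequence/contradiction argument must therefore be tailored carefully to exploit only the weaker information $S_\ell \to 0$.
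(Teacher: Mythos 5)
Your overall architecture (uniform boundedness, reduction of $\xi_\ell(\MM_\ell)$ driving the product down) parallels the paper's, but the central claim of your ``key lemma'' is not proved and, as stated for the \emph{full} sequence, is not what the paper establishes.

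Concretely: from~\eqref{assumption:stab}--\eqref{assumption:red} you correctly derive
\begin{equation*}
	\xi_{\ell+1}^2 \le \xi_\ell^2 - (1-\qred)\,\xi_\ell(\TT_\ell\setminus\TT_{\ell+1})^2 + 2\Cest S_\ell + 2S_\ell^2,
\end{equation*}
but summing this telescopically bounds $\sum_\ell \xi_\ell(\TT_\ell\setminus\TT_{\ell+1})^2$ only in terms of $\sum_\ell S_\ell$, and \textsl{a priori} convergence~\eqref{eq:a_priori_konv} gives $S_\ell\to 0$ but \emph{not} $\sum_\ell S_\ell<\infty$. A contradiction argument ``if $\xi_{\ell_k}(\MM_{\ell_k})\ge\varepsilon$ infinitely often, the decrements accumulate'' does not close, because between consecutive $\ell_k$ the estimator may \emph{increase} by $O(S_\ell)$ per step, and those increments are not summable. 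You name this obstacle yourself in the last paragraph (``must be tailored carefully to exploit only $S_\ell\to 0$''), but no tailoring is given; the gap is genuine, not cosmetic. The paper avoids it by constructing, in Lemma~\ref{lemma:gp1}, a special subsequence $(\ell_k)$ with $\TT_{\ell_k}\cap\TT_{\ell_{k+1}}=\TT_{\ell_k}\cap\TT_\infty$ (where $\TT_\infty$ is the set of elements never refined). Along that subsequence, the inclusion $\TT_{\ell_{k+1}}\setminus\TT_{\ell_{k+2}}\subseteq\TT_{\ell_{k+1}}\setminus\TT_{\ell_k}$ lets one apply~\eqref{assumption:red} \emph{directly} from $\ell_k$ to $\ell_{k+1}$, giving a genuine one-step contraction up to a null sequence, to which Lemma~\ref{lemma:folgenkonv} applies. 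The result is only $\xi_{\ell_k}(\MM_{\ell_k})\to 0$ as $k\to\infty$, not for the full sequence; what you assert is strictly stronger and is neither established in the paper nor obtainable by your sketch.

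There is a second gap in the concluding step for Marking Strategy~\ref{alg:mark_a}. The criterion~\eqref{eq:mark_a} controls only $\max_{T\in\TT_\ell\setminus\MM_\ell}\eta_\ell(T)\cdot\max_{T\in\TT_\ell\setminus\MM_\ell}\zeta_\ell(T)$. Passing from a small maximum to a small $\ell^2$-sum $\eta_\ell(\TT_\ell\setminus\MM_\ell)$ costs a factor $\#\TT_\ell\to\infty$, so ``a case distinction on the dominating factor'' plus~\eqref{assumption:stab}--\eqref{assumption:red}'' does not yield $\eta_\ell\zeta_\ell\to 0$. The paper resolves exactly this via Lemma~\ref{lemma:gp2}: using the same subsequence, $\eta_{\ell_k}(\TT_{\ell'}\cap\TT_\infty)\,\zeta_{\ell_k}(\TT_{\ell'}\cap\TT_\infty)$ is bounded by $\#[\TT_{\ell'}\cap\TT_\infty]$ (a \emph{fixed} finite number for fixed $\ell'$) times the max-product, which tends to zero, and the remainder is handled by~\eqref{assumption:stab}--\eqref{assumption:red} plus the boundedness of the estimators. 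Without some substitute for the finite set $\TT_{\ell'}\cap\TT_\infty$, there is no way to control the $\ell^2$-sum on non-marked elements. So you need both the topological subsequence from Lemma~\ref{lemma:gp1} (to make the contraction argument close) and the fixed-cardinality set $\TT_{\ell'}\cap\TT_\infty$ from Lemma~\ref{lemma:gp2} (to pass from max-norm to $\ell^2$-norm); these are the missing ideas.
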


Similar to~\cite[Theorem~3.3]{gp2021} for Dörfler marking~\eqref{eq:doerfler}, our last main result exploits Marking Strategy~\ref{alg:mark_a}, which is inspired by Dörfler marking and allows to drop the child property~\eqref{assumption:refine2} from the list of assumptions.
Furthermore, we note that this theorem deals with the slightly more restrictive Marking Strategy~\ref{alg:mark_b}.

\begin{theorem}\label{thm:goafem_b} 
    Consider the output of Algorithm~\ref{alg:goafem} using Marking Strategy~\ref{alg:mark_b}. Suppose that the error estimators $(\eta_\ell)_{\ell\in\N_0}$ and $(\zeta_\ell)_{\ell\in\N_0}$ satisfy stability~\eqref{assumption:stab} and reduction~\eqref{assumption:red}.
    Suppose further 
    %that $W$ satisfies~\eqref{eq:C-assumptions} and 
    that the refinement strategy satisfies~\eqref{assumption:refine1}.
    Then, \textsl{a priori} convergence~\eqref{eq:a_priori_konv} of $(u_\ell)_{\ell\in\N_0}$ and $(z_\ell)_{\ell\in\N_0}$ yields convergence
    \begin{equation*}
	    \eta_\ell \, \zeta_\ell \to 0
	    \quad
	    \text{as } \ell \to \infty.
    \end{equation*}
\end{theorem}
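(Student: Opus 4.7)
The plan is to derive a uniform perturbed contraction for the squared goal-estimator product $P_\ell := \eta_\ell^2\,\zeta_\ell^2$ and conclude $P_\ell\to 0$ via a standard null-sequence argument, which immediately gives $\eta_\ell\,\zeta_\ell\to 0$.

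First I establish a one-step bound for each individual indicator. Boundedness $\eta_\ell + \zeta_\ell \leq \Cest$ holds in Setting~II (see Lemma~\ref{lemma:estimator-boundedness} below), and \textsl{a~priori} convergence~\eqref{eq:a_priori_konv} together with continuity of $S$ at $(0,0)$ yields $S_\ell := S\bigl(\|u_{\ell+1}-u_\ell\|_\XX,\,\|z_{\ell+1}-z_\ell\|_\XX\bigr) \to 0$. Splitting $\TT_\ell = (\TT_{\ell+1}\cap\TT_\ell)\cup(\TT_\ell\setminus\TT_{\ell+1})$, applying stability~\eqref{assumption:stab} on the first part and reduction~\eqref{assumption:red} on the second, and using $\MM_\ell\subseteq\TT_\ell\setminus\TT_{\ell+1}$ granted by~\eqref{assumption:refine1}, a brief calculation with $(a+b)^2 \leq a^2 + 2\Cest b + b^2$ produces
\[
    \eta_{\ell+1}^2 \leq \eta_\ell^2 - (1-\qred)\,\eta_\ell(\MM_\ell)^2 + \tilde S_\ell,
\]
with $\tilde S_\ell := 2\,\Cest\,S_\ell + 2\,S_\ell^2 \to 0$, and the identical bound for $\zeta_{\ell+1}^2$. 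Note that only~\eqref{assumption:refine1} enters here, and the child property~\eqref{assumption:refine2} is indeed not used, as Remark~\ref{rem:marking-strategies} anticipates.

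Next I exploit Marking Strategy~\ref{alg:mark_b}: since $W(x,y)\leq C_W\max\{x,y\}$ by~\eqref{eq:C-assumptions}, at each $\ell$ either $\eta_\ell(\MM_\ell)^2 \geq (\theta/C_W)\eta_\ell^2$ (\emph{primal case}) or $\zeta_\ell(\MM_\ell)^2 \geq (\theta/C_W)\zeta_\ell^2$ (\emph{dual case}). Setting $q := 1 - (1-\qred)(\theta/C_W) \in (0,1)$, the primal case gives a genuine contraction $\eta_{\ell+1}^2 \leq q\,\eta_\ell^2 + \tilde S_\ell$ while dropping the marking term for the dual indicator yields only $\zeta_{\ell+1}^2 \leq \zeta_\ell^2 + \tilde S_\ell$; the dual case is symmetric. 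Multiplying these two bounds and invoking $\eta_\ell,\zeta_\ell \leq \Cest$, in either case I obtain
\[
    P_{\ell+1} \leq q\,P_\ell + C\,\tilde S_\ell
\]
for a constant $C$ depending only on $\Cest$. Since $q<1$ and $\tilde S_\ell\to 0$, a variant of Lemma~\ref{lemma:folgenkonv} (contraction up to a null sequence) forces $P_\ell \to 0$.

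The conceptual crux—and the main obstacle—is that Marking Strategy~\ref{alg:mark_b} may switch between the primal and dual indicator at every step, so no single estimator need contract along the whole sequence. What rescues the argument is precisely the product structure: in either case exactly one factor contracts, while the uniform boundedness of the other factor prevents the perturbation from accumulating, so $P_\ell$ inherits a \emph{uniform} contraction with respect to a single ratio $q<1$. Once this is set up, the perturbed-contraction lemma finishes the proof.
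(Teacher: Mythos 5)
Your argument is correct and follows the same overall structure as the paper's proof: split into primal/dual cases via the reformulated Dörfler property $W(x,y)\leq C_W\max\{x,y\}$, derive a one-step perturbed contraction for the estimator that marks, a non-contractive but bounded perturbed recursion for the other estimator, multiply, and finish with the perturbed-contraction lemma (Lemma~\ref{lemma:folgenkonv}). The technical difference is in how the cross term from stability is absorbed: the paper applies Young's inequality with a free parameter $\delta>0$, obtaining the contraction ratio $q(\delta) = (1+\delta) - (1+\delta-\qred)\theta/C_W$ and symmetrically $(1+\varepsilon)$ for the passive estimator, so it must close the proof by choosing $\delta,\varepsilon$ small enough that $q(\delta)(1+\varepsilon)<1$. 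You instead bound the cross term directly by $2\Cest S_\ell$ using uniform boundedness $\eta_\ell\leq\Cest$ from Lemma~\ref{lemma:estimator-boundedness}, which produces the clean ratio $q=1-(1-\qred)\theta/C_W<1$ with no parameter tuning, at the price of pushing $\Cest$ into the perturbation $\tilde S_\ell$. Both routes are sound; yours is marginally leaner at the end, while the paper's Young-inequality version avoids invoking boundedness inside the one-step estimate (though it still uses it for the final null sequence $b_\ell$). One small omission: you do not explicitly address the degenerate case $\eta_{\ell_0}\zeta_{\ell_0}=0$, which is needed for Marking Strategy~\ref{alg:mark_b} to be well defined (the denominators in~\eqref{eq:mark_b}); the paper dispatches this via Remark~\ref{rem:estimators_nonzero}, after which the sequence is eventually constant with vanishing product, so nothing further is needed.
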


%\clearpage
%!TEX root = main.tex

%%%%%%%%%%%%%%%%%%%%%%%%%%%%%%%%%%%%%%%%%%%%%%%%%%%%%%%%%%%%%%%%%%%%%%%%%
%%%%%%%%%%%%%%%%%%%%%%%%%%%%%%%%%%%%%%%%%%%%%%%%%%%%%%%%%%%%%%%%%%%%%%%%%
\section{Proof of Theorem~\ref{thm:msv-result} (Setting I)} \label{sec:proofA}
%%%%%%%%%%%%%%%%%%%%%%%%%%%%%%%%%%%%%%%%%%%%%%%%%%%%%%%%%%%%%%%%%%%%%%%%%
%%%%%%%%%%%%%%%%%%%%%%%%%%%%%%%%%%%%%%%%%%%%%%%%%%%%%%%%%%%%%%%%%%%%%%%%%

The analysis presented in this section follows the work~\cite{msv2008}.
Without loss of generality, we suppose that Marking Strategy~\ref{alg:mark_a} is used; see Remark~\ref{rem:marking-strategies}.
We start with some auxiliary results.

%%%%%%%%%%%%%%%%%%%%%%%%%%%%%%%%%%%%%%%%%%%%%%%%%%%%%%%%%%%%%%%%%%%%%%%%%
\subsection{Auxiliary Results}\label{subsec:msv-auxiliary-results}
%%%%%%%%%%%%%%%%%%%%%%%%%%%%%%%%%%%%%%%%%%%%%%%%%%%%%%%%%%%%%%%%%%%%%%%%%
 
To abbreviate notation, let $(\xi, v) \in \{ (\eta, u), (\zeta, z) \}$.
For all $\ell \in \N_0$, define the following subsets of $\TT_\ell$:
\begin{align}
\begin{split}
    \TTgood_\ell
    &:=
    \set[\big]{T \in \TT_\ell}%
    	{\exists \ell' \ge \ell
            ~\forall T' \in \TT_\ell[T]
            ~\forall T'' \in \TT_{\ell'} \text{ with } T'' \subseteq T'
            \colon
            |T''| < \rhodeff |T'|};\\
    \TTbad_\ell
    &:=
    \set[\big]{T \in \TT_\ell}{\TT_\ell[T] \subseteq \TT_\infty};\\
    \TTugly_\ell
    &:=
    \TT_\ell \backslash \big[ \TTgood_\ell \cup \TTbad_\ell \big].
\end{split}
\end{align}
With this, it holds that
\begin{equation*}
    \xi_\ell^2
    =
    \xi_\ell ( \TTgood_\ell )^2
    + \xi_\ell( \TTugly_\ell )^2
    + \xi_\ell ( \TTbad_\ell )^2,
\end{equation*}
which allows us to investigate convergence of the corresponding error contributions separately.
As observed in~\cite{msv2008}, the contributions of $\TTgood_\ell$ and $\TTugly_\ell$ vanish in the limit.
To this end, we need \emph{local efficiency} which can be seen formally as~\eqref{assumption:ldeff} in the limit $v_\fine \to v$.
This, as well as local boundedness of the estimators, is stated in the next result.

\begin{proposition}
	\label{prop:local-efficiency}
	{\normalfont\rmfamily(i)}
	Suppose subadditivity~\eqref{assumption:norm1} of 
	$\norm{\cdot}_\XX$, local discrete efficiency~\eqref{assumption:ldeff}, and the natural approximation property~\eqref{assumption:approx}.
	Then, there holds local efficiency for $(\xi, v) \in \{ (\eta, u), (\zeta, z) \}$, i.e., for all $\TT_\coarse \in \T$, the corresponding discrete solution $v_\coarse$ satisfies
	\begin{equation}
	\label{eq:local-efficiency}
		\Cdeff^{-2} \xi_\coarse(T)^2
		\leq
		\norm{ v - v_\coarse }_{\XX(\Omega_\coarse[T])}^2
		+ \osc^\xi_\coarse ( \TT_\coarse[T] )^2
		\quad \text{for all }
		T \in \TT_\coarse.
	\end{equation}
	
	\noindent{\normalfont\rmfamily(ii)}
	Suppose that there hold local efficiency~\eqref{eq:local-efficiency} and the oscillation bound~\eqref{assumption:osc}.
	Then, with $\Cbdd := \Cdeff^2 ( 2 + w(|\Omega|) )$, there holds
	\begin{equation}
	\label{eq:local-boundedness}
		\Cbdd^{-1} \xi_\coarse(T)^2
		\leq
		\norm{v_\coarse}_{\XX(\Omega_\coarse[T])}^2
		+ \norm{v}_{\XX(\Omega_\coarse[T])}^2
		+ \norm{D^\xi}_{\DD(\Omega_\coarse[T])}^2.
	\end{equation}
\end{proposition}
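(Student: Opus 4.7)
The plan is to prove the two parts in sequence, using the natural approximation property for~(i) and a direct estimate for~(ii).

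For part~(i), I would first observe that $\T(\TT_\coarse, \TT_\coarse, \rhodeff) \subseteq \T(\TT_\coarse, \{T\}, \rhodeff)$ for each $T\in\TT_\coarse$ (refining every element sufficiently is stronger than refining only $T$'s patch). Hence, invoking~\eqref{assumption:approx}, for each $\varepsilon > 0$ I can pick $\TT_\fine \in \T(\TT_\coarse,\TT_\coarse,\rhodeff)$ with $\norm{v - v_\fine}_\XX \leq \varepsilon$, and local discrete efficiency~\eqref{assumption:ldeff} then yields
\begin{equation*}
\Cdeff^{-2}\,\xi_\coarse(T)^2
\;\leq\;
\norm{v_\fine - v_\coarse}_{\XX(\Omega_\coarse[T])}^2
+ \osc^\xi_\coarse(\TT_\coarse[T])^2.
\end{equation*}
Subadditivity~\eqref{assumption:norm1} implies monotonicity of $\norm{\cdot}_{\XX(\omega)}$ in $\omega$ (write $\Omega=\Omega_\coarse[T]\cup(\Omega\setminus\Omega_\coarse[T])$), so $\norm{v - v_\fine}_{\XX(\Omega_\coarse[T])} \leq \norm{v-v_\fine}_\XX \leq \varepsilon$. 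Combined with the triangle inequality, this gives $\norm{v_\fine - v_\coarse}_{\XX(\Omega_\coarse[T])} \leq \norm{v - v_\coarse}_{\XX(\Omega_\coarse[T])} + \varepsilon$. Since the left-hand side $\xi_\coarse(T)$ and the oscillation term depend only on $\TT_\coarse$, letting $\varepsilon \to 0^+$ yields~\eqref{eq:local-efficiency}.

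For part~(ii), I would start from local efficiency~\eqref{eq:local-efficiency}, apply the Young-type inequality $(a+b)^2 \leq 2a^2 + 2b^2$ to obtain
\begin{equation*}
\norm{v - v_\coarse}_{\XX(\Omega_\coarse[T])}^2
\;\leq\;
2\,\norm{v}_{\XX(\Omega_\coarse[T])}^2 + 2\,\norm{v_\coarse}_{\XX(\Omega_\coarse[T])}^2,
\end{equation*}
and use the oscillation bound~\eqref{assumption:osc} together with $|T| \leq |\Omega|$ and monotonicity of $w$ to estimate
\begin{equation*}
\osc^\xi_\coarse(\TT_\coarse[T])^2
\;\leq\;
w(|\Omega|)\bigl[\norm{v_\coarse}_{\XX(\Omega_\coarse[T])}^2 + \norm{D^\xi}_{\DD(\Omega_\coarse[T])}^2\bigr].
\end{equation*}
Summing the two bounds and noting that the largest appearing coefficient is $2 + w(|\Omega|)$, one arrives at~\eqref{eq:local-boundedness} with $\Cbdd = \Cdeff^2(2 + w(|\Omega|))$.

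I do not expect any real obstacle here: the only subtlety is ensuring that the passage $\varepsilon\to 0$ in part~(i) does not deteriorate the constant $\Cdeff$, which is automatic since $\xi_\coarse(T)$ and $\osc^\xi_\coarse(\TT_\coarse[T])$ do not depend on the chosen refinement $\TT_\fine$. Part~(ii) is then a direct algebraic computation once local efficiency is in hand.
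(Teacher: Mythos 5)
Your proposal is correct and takes essentially the same approach as the paper for both parts. The only cosmetic difference is in part~(i): the paper applies a Young-type inequality $(a+b)^2 \le (1+\delta)a^2 + (1+\delta^{-1})b^2$ and sends $\delta \to 0$, whereas you apply the triangle inequality directly to get $\norm{v_\fine - v_\coarse}_{\XX(\Omega_\coarse[T])} \le \norm{v - v_\coarse}_{\XX(\Omega_\coarse[T])} + \varepsilon$ and then send $\varepsilon \to 0$; both are valid and yield the same constant.
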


\begin{proof}
	{\normalfont\rmfamily(i)}
	Let $\delta > 0$.
	Applying~\eqref{assumption:ldeff}, the Young inequality, subadditivity~\eqref{assumption:norm1} of $\norm{\cdot}_\XX$ yields that
	\begin{equation*}
	\Cdeff^{-2} \xi_\coarse(T)^2
	\le
	(1 + \delta) \norm{v - v_\coarse}^2_{\XX(\Omega_\coarse[T])}
	+
	(1 + \delta^{-1}) \norm{v - v_\fine}^2_{\XX}
	+
	\osc^\xi_\coarse ( \TT_\coarse[T] )^2
	\end{equation*}
	for all $\TT_\coarse \in \T$, $T \in \TT_\coarse$ and $\TT_\fine \in \T( \TT_\coarse, \{T\}, \rhodeff )$.
	By~\eqref{assumption:approx}, we can choose $\TT_\fine \in \T( \TT_\coarse, \TT_\coarse, \rhodeff )$ such that $\norm{v - v_\fine}^2_{\XX} \leq (1+\delta^{-1})^{-2}$.
	Since $\delta$ is arbitrary, we derive~\eqref{eq:local-efficiency}.
	
	%\noindent
	\normalfont\rmfamily(ii)
	Together with the oscillation bound~\eqref{assumption:osc}, local efficiency~\eqref{eq:local-efficiency} implies that
	\begin{align*}
	\Cdeff^{-2} \, \xi_\coarse(T)^2
	&\eqreff*{eq:local-efficiency}{\le}
	\norm{v - v_\coarse }_{\XX(\Omega_\coarse[T])}^2
	+ \osc^\xi_\coarse ( \TT_\coarse[T] )^2
	\\
	&\eqreff*{assumption:osc}{\le}
	2 \norm{v}_{\XX(\Omega_\coarse[T])}^2
	+ \big( 2 + w(|T|) \big) \norm{v_\coarse}_{\XX(\Omega_\coarse[T])}^2
	+ w(|T|) \norm{D^\xi}_{\DD(\Omega_\coarse[T])}.
	\end{align*}
	This yields~\eqref{eq:local-boundedness}.
\end{proof}

\begin{proposition}[{\cite[Proposition 4.1]{msv2008}}]
\label{prop:good-convergence}
    Suppose that $\XX$ and $\DD$ are equipped with subadditive~\eqref{assumption:norm1} seminorms that satisfy~\eqref{assumption:norm3}.
    Suppose further that the refinement strategy satisfies~\eqref{assumption:refine2} and~\eqref{assumption:refine3}--\eqref{assumption:refine4}.
    Finally, suppose that there holds local discrete efficiency~\eqref{assumption:ldeff}--\eqref{assumption:osc}.
    Then, \textsl{a priori} convergence~\eqref{eq:a_priori_konv} yields
    \begin{equation}
    \label{eq:good-convergence}
        \xi_\ell ( \TTgood_\ell ) \to 0
        \quad \text{as }
        \ell \to \infty.
    \end{equation}
\end{proposition}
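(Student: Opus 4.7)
The plan is to apply the local discrete efficiency~\eqref{assumption:ldeff} simultaneously on all $T \in \TTgood_\ell$ using one common finer mesh, and then to combine this with \textsl{a priori} convergence of $(v_\ell)$ and absolute continuity of the norms.

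For fixed $\ell \in \N_0$ and each $T \in \TTgood_\ell$, the definition of $\TTgood_\ell$ provides some $\ell'(T) \geq \ell$ with $\TT_{\ell'(T)} \in \T(\TT_\ell, \{T\}, \rhodeff)$. Since $\TTgood_\ell$ is finite, the level $\ell^* := \max_{T \in \TTgood_\ell}\ell'(T)$ is finite; moreover, \eqref{assumption:refine2}--\eqref{assumption:refine3} ensure that any further refinement preserves the size-reduction condition, so that $\TT_{\ell^*} \in \T(\TT_\ell, \{T\}, \rhodeff)$ holds uniformly for every $T \in \TTgood_\ell$. Invoking~\eqref{assumption:ldeff} with $\TT_\coarse = \TT_\ell$ and $\TT_\fine = \TT_{\ell^*}$, summing over $T \in \TTgood_\ell$, and using subadditivity~\eqref{assumption:norm1} together with the patch bound~\eqref{assumption:refine4} (each element sits in at most $\Cpatch$ patches) yields
\begin{equation*}
	\Cdeff^{-2}\, \xi_\ell(\TTgood_\ell)^2
	\leq
	\Cpatch\, \norm{v_{\ell^*} - v_\ell}_\XX^2 + \sum_{T \in \TTgood_\ell} \osc^\xi_\ell(\TT_\ell[T])^2.
\end{equation*}
The first summand vanishes as $\ell \to \infty$, because $\ell^* \geq \ell$ and \textsl{a priori} convergence~\eqref{eq:a_priori_konv} yields $v_\ell, v_{\ell^*} \to v_\infty$ in $\XX$.

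The main obstacle is the oscillation sum. Using~\eqref{assumption:osc}, each term is bounded by $w(|T|)\bigl[\norm{v_\ell}_{\XX(\Omega_\ell[T])}^2 + \norm{D^\xi}_{\DD(\Omega_\ell[T])}^2\bigr]$. For arbitrary $\delta > 0$, I would partition $\TTgood_\ell$ into small elements with $|T| < \delta$ and large elements with $|T| \geq \delta$. The small part is controlled by $w(\delta)\,\Cpatch\bigl[\norm{v_\ell}_\XX^2 + \norm{D^\xi}_\DD^2\bigr]$ and is rendered arbitrarily small via continuity of $w$ at $0$ together with boundedness of $(\norm{v_\ell}_\XX)_\ell$ coming from \textsl{a priori} convergence. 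The large part is the delicate piece: since every $T \in \TTgood_\ell$ must be genuinely refined by level $\ell^*$, and since~\eqref{assumption:refine3} limits the number of bisection generations that can yield an element of measure at least $\delta$, the total area occupied by these large good elements must tend to $0$ as $\ell \to \infty$. Absolute continuity~\eqref{assumption:norm2} then forces the remaining oscillation contribution to vanish. Taking $\ell \to \infty$ first and letting $\delta \to 0$ afterwards completes the argument.
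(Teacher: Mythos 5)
The paper itself does not prove this proposition; it delegates to \cite[Proposition~4.1]{msv2008}. Your reconstruction follows the expected line of attack: apply local discrete efficiency on a common sufficiently fine mesh $\TT_{\ell^*}$, control the error term via \textsl{a priori} convergence, and split the oscillation sum according to element size. This structure is sound. (A side remark: for the claim that $\TT_{\ell^*} \in \T(\TT_\ell,\{T\},\rhodeff)$ once $\TT_{\ell'(T)} \in \T(\TT_\ell,\{T\},\rhodeff)$, you only need~\eqref{assumption:refine2}, since further descendants of $T''' \in \TT_{\ell'(T)}$ can only have smaller measure; \eqref{assumption:refine3} is not required there. Also, the clean overlap constant supplied by Proposition~\ref{prop:sum-subadditive-seminorms} is $\Cpatch^2 + 1$, not $\Cpatch$, but that is immaterial.)

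The genuine problem is your treatment of the large elements. You invoke absolute continuity~\eqref{assumption:norm2} to conclude that a small total area forces a small oscillation contribution, but~\eqref{assumption:norm2} is \emph{not} among the hypotheses of this proposition: the statement assumes only~\eqref{assumption:norm1} and~\eqref{assumption:norm3}. (Absolute continuity is deliberately reserved for the analogous result on $\TTugly_\ell$, Proposition~\ref{prop:ugly-convergence}.) As written, your argument therefore relies on an unavailable assumption. Fortunately, the gap can be closed by sharpening your own observation: every $T \in \TTgood_\ell$ satisfies $T \notin \TT_\infty$, since $T \in \TT_\ell[T]$ and the definition of $\TTgood_\ell$ forces $T$ to be properly refined by level $\ell'$. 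By~\eqref{assumption:refine3}, the collection $\mathcal{F}_\delta$ of \emph{all} elements ever created with measure $\geq \delta$ is finite, and each $T \in \mathcal{F}_\delta \setminus \TT_\infty$ disappears from the mesh after some finite level. Taking the maximum over this finite set yields $L_\delta \in \N$ with $\{T \in \TTgood_\ell : |T| \geq \delta\} = \emptyset$ for all $\ell \geq L_\delta$. Hence the large-element part of the oscillation sum is not merely small but exactly zero for $\ell$ large, and no norm--measure continuity is needed. With this replacement, the proof is complete within the stated hypotheses.
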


\begin{proposition}[{\cite[Proposition 4.2]{msv2008}}]
\label{prop:ugly-convergence}
    Suppose that $\XX$ and $\DD$ are equipped with subadditive~\eqref{assumption:norm1} and absolutely continuous~\eqref{assumption:norm2} seminorms.
    Suppose further that the refinement strategy satisfies~\eqref{assumption:refine2} and~\eqref{assumption:refine3}--\eqref{assumption:refine5}.
    Finally, suppose that there holds local efficiency~\eqref{eq:local-efficiency} as well as the oscillation bound~\eqref{assumption:osc}.
    Then, \textsl{a priori} convergence~\eqref{eq:a_priori_konv} yields
    \begin{equation}
    \label{eq:good-convergence}
        \xi_\ell ( \TTugly_\ell ) \to 0
        \quad \text{as }
        \ell \to \infty.
    \end{equation}
\end{proposition}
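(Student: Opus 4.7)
The plan is to dominate $\xi_\ell(\TTugly_\ell)^2$ by local seminorm contributions on $\Omega_\ell[\TTugly_\ell]$ via Proposition~\ref{prop:local-efficiency}\emph{(ii)}, and then to show that $|\Omega_\ell[\TTugly_\ell]| \to 0$, so that absolute continuity~\eqref{assumption:norm2} forces everything on the right-hand side to vanish. Concretely, for each $T \in \TTugly_\ell$, local boundedness~\eqref{eq:local-boundedness} yields
\begin{equation*}
    \xi_\ell(T)^2 \leq \Cbdd \, \bigl[ \norm{v_\ell}_{\XX(\Omega_\ell[T])}^2 + \norm{v}_{\XX(\Omega_\ell[T])}^2 + \norm{D^\xi}_{\DD(\Omega_\ell[T])}^2 \bigr].
\end{equation*}
Summing over $T \in \TTugly_\ell$ and using the patch cardinality bound~\eqref{assumption:refine4} together with subadditivity~\eqref{assumption:norm1} of the seminorms (each element $T' \in \TT_\ell$ occurs in at most $\Cpatch$ patches), I will obtain
\begin{equation*}
    \xi_\ell(\TTugly_\ell)^2 \leq \Cbdd \Cpatch \, \bigl[ \norm{v_\ell}_{\XX(\Omega_\ell[\TTugly_\ell])}^2 + \norm{v}_{\XX(\Omega_\ell[\TTugly_\ell])}^2 + \norm{D^\xi}_{\DD(\Omega_\ell[\TTugly_\ell])}^2 \bigr].
\end{equation*}

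Next, the discrete term $\norm{v_\ell}_{\XX(\Omega_\ell[\TTugly_\ell])}$ will be split as $v_\ell = v_\infty + (v_\ell - v_\infty)$ and bounded, via the triangle inequality together with~\eqref{assumption:norm1} and~\eqref{assumption:norm3}, by $\norm{v_\infty}_{\XX(\Omega_\ell[\TTugly_\ell])} + \norm{v_\ell - v_\infty}_\XX$. The second summand tends to $0$ by a priori convergence~\eqref{eq:a_priori_konv}, while the first is of the same type as $\norm{v}_{\XX(\Omega_\ell[\TTugly_\ell])}$ and $\norm{D^\xi}_{\DD(\Omega_\ell[\TTugly_\ell])}$: all three vanish as $\ell \to \infty$ by absolute continuity~\eqref{assumption:norm2}, \emph{provided} $|\Omega_\ell[\TTugly_\ell]| \to 0$.

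The main obstacle is thus precisely to show that $|\Omega_\ell[\TTugly_\ell]| \to 0$. The key observation is that $T \in \TTugly_\ell$ if and only if the patch $\TT_\ell[T]$ contains both an element belonging to the limit mesh $\TT_\infty$ (hence remaining unrefined forever) and an element that is refined at some later level: if \emph{all} patch elements were eventually refined, iterated application of child-size reduction~\eqref{assumption:refine3} would place $T$ in $\TTgood_\ell$, whereas if no patch element were ever refined, $T$ would lie in $\TTbad_\ell$. Using~\eqref{assumption:refine3} the maximum size of non-stable elements in $\TT_\ell$ shrinks to zero, and patch-size comparability~\eqref{assumption:refine5} transfers this smallness to $T$ and its remaining patch neighbors. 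Consequently $\Omega_\ell[\TTugly_\ell]$ contracts to a thinning tubular neighborhood of the interface between $\bigcup \TT_\infty$ and its complement, whose Lebesgue measure must tend to zero. The detailed measure-theoretic argument is the one carried out in~\cite[Proposition 4.2]{msv2008}; combining it with the bounds above completes the proof.
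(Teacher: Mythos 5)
The paper does not reprove this proposition but quotes it directly from~\cite[Proposition~4.2]{msv2008}, so the only fair benchmark is the argument there. Your high-level plan does mirror that reference: bound $\xi_\ell(\TTugly_\ell)^2$ by local seminorms on $\Omega_\ell[\TTugly_\ell]$ via Proposition~\ref{prop:local-efficiency}(ii) and Proposition~\ref{prop:sum-subadditive-seminorms}, split $v_\ell = v_\infty + (v_\ell - v_\infty)$, show that $|\Omega_\ell[\TTugly_\ell]| \to 0$, and conclude via absolute continuity~\eqref{assumption:norm2}. However, the central quantitative claim $|\Omega_\ell[\TTugly_\ell]| \to 0$ is precisely the substance of \cite[Proposition~4.2]{msv2008}, and your proposal defers it to that reference instead of establishing it. Without it the absolute-continuity step has nothing to act on, so the proposal is a proof skeleton, not a proof. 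Moreover, the ``thinning tubular neighborhood'' heuristic is not an accurate picture of what drives the convergence: ugly elements need not be small (a fixed $T \in \TT_\infty$ whose patch still contains non-limit elements can sit in $\TTugly_\ell$), and the measure decays because such elements are eventually reclassified as bad via Proposition~\ref{prop:T-infty-patch}, combined with the uniform convergence of the local mesh-size function $h_\ell$ from Proposition~\ref{prop:local-meshsize-convergence}.

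There is also a logical imprecision in the stated ``if and only if'' characterization of $\TTugly_\ell$. The direction $T \notin \TTbad_\ell \Rightarrow$ the patch contains an element that is later refined is fine, but $T \notin \TTgood_\ell$ does not mean some $T' \in \TT_\ell[T]$ belongs to $\TT_\infty$. It only means that for every future $\ell'$ some descendant $T'' \in \TT_{\ell'}$ of some patch element $T'$ satisfies $|T''| \ge \rhodeff |T'|$; this can happen with $T'$ bisected once and a proper child of $T'$, not $T'$ itself, persisting in $\TT_\infty$. The correct statement is that some element of the descendant tree over the patch remains in $\TT_\infty$, and controlling the gap between the two statements is exactly where~\eqref{assumption:refine3} and~\eqref{assumption:refine5} enter the analysis. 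This imprecision does not necessarily invalidate the idea, but it conflates the definitions and hides why the shape-regularity hypotheses are needed.
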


The following result is implicitly found in Step~2 of the proof of \cite[Lemma~4.1]{msv2008}.
We state it here explicitly, since it is used in our proofs of Lemma~\ref{lemma:msv-estimators-bounded} and Theorem~\ref{thm:msv-result}.

\begin{proposition}
\label{prop:sum-subadditive-seminorms}
    Let $\WW$ be a normed space equipped with subadditive seminorms~\eqref{assumption:norm1} and suppose that patches are bounded~\eqref{assumption:refine4}. Then, for all
    $\TT_\coarse \in \T$
    and
    $\UU_\coarse \subseteq \TT_\coarse$
    it holds that
    \begin{equation}
    \label{eq:sum-subadditive-seminorms}
        \sum_{T \in \UU_\coarse} \norm{w}_{\WW(\Omega_\coarse[T])}^2
        \le
        (\Cpatch^2 + 1) \norm{w}_{\WW(\Omega_\coarse[\UU_\coarse])}^2
        \quad \text{for all }
        w \in \WW.
    \end{equation}
\end{proposition}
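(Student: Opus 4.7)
I would use a combinatorial coloring argument. The idea is to partition $\UU_\coarse$ into a bounded number of subsets, within each of which the patches $\Omega_\coarse[T]$ are pairwise essentially disjoint, so that subadditivity~\eqref{assumption:norm1} can be applied class by class.

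Consider the overlap graph on vertex set $\UU_\coarse$ whose edges join distinct elements $T_1, T_2$ with $|\Omega_\coarse[T_1] \cap \Omega_\coarse[T_2]| > 0$. Since distinct elements of $\TT_\coarse$ intersect only in sets of measure zero, any such overlap forces the existence of a common element $T' \in \TT_\coarse[T_1] \cap \TT_\coarse[T_2]$, and hence $T_2 \in \TT_\coarse[\TT_\coarse[T_1]]$. Two successive applications of~\eqref{assumption:refine4} then yield $\#\TT_\coarse[\TT_\coarse[T_1]] \le \Cpatch^2$, bounding the maximum degree of this overlap graph. Greedy coloring therefore produces a partition $\UU_\coarse = \bigsqcup_{i=1}^{K} \UU_\coarse^{(i)}$ with $K \le \Cpatch^2 + 1$ color classes in which the corresponding patches are pairwise essentially disjoint.

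Within each color class, iterating~\eqref{assumption:norm1} across this pairwise essentially disjoint family and using monotonicity of the seminorm in the domain gives
\begin{equation*}
    \sum_{T \in \UU_\coarse^{(i)}} \norm{w}_{\WW(\Omega_\coarse[T])}
    \le
    \norm{w}_{\WW(\Omega_\coarse[\UU_\coarse])}.
\end{equation*}
The elementary bound $\sum_j a_j^2 \le \bigl( \sum_j a_j \bigr)^2$, valid for non-negative reals, then upgrades this to the squared version $\sum_{T \in \UU_\coarse^{(i)}} \norm{w}_{\WW(\Omega_\coarse[T])}^2 \le \norm{w}_{\WW(\Omega_\coarse[\UU_\coarse])}^2$. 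Summing over the at most $\Cpatch^2 + 1$ color classes yields the claimed inequality.

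There is no serious obstacle: the only substantive step is the degree bound for the overlap graph, which rests entirely on the finite-patch assumption~\eqref{assumption:refine4} together with the fact that mesh elements have negligible pairwise intersection. The passage from first-power to squared subadditivity is a one-line observation, and the rest is routine.
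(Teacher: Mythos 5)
Your proof is correct. The paper itself does not supply an argument for this proposition, referring instead to Step~2 of the proof of \cite[Lemma~4.1]{msv2008}, so your write-up is more self-contained than the source. The coloring argument is sound: the second-level patch count $\#\TT_\coarse[\TT_\coarse[T_1]] \le \Cpatch^2$ controls the overlap-graph degree, and the conversion $\sum_j a_j^2 \le \bigl(\sum_j a_j\bigr)^2$ correctly bridges the first-power subadditivity in~\eqref{assumption:norm1} to the squared estimate. Two minor points: (a) the degree bound tightens to $\Cpatch^2-1$ because $T_1\in\TT_\coarse[\TT_\coarse[T_1]]$, so greedy coloring already gives $K\le\Cpatch^2$, comfortably within the stated constant $\Cpatch^2+1$; (b) the monotonicity you invoke (if $\omega_1\subseteq\omega_2$ then $\norm{w}_{\WW(\omega_1)}\le\norm{w}_{\WW(\omega_2)}$) is indeed a consequence of~\eqref{assumption:norm1} together with nonnegativity of the seminorm --- apply~\eqref{assumption:norm1} to $\omega_1$ and $\omega_2\setminus\omega_1$ and drop the second term --- but since it is used at a key step it deserves to be stated explicitly rather than left implicit.
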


Together, these last two results give boundedness of the estimators.

\begin{lemma}
\label{lemma:msv-estimators-bounded}
    Suppose that $\XX$ and $\DD$ are equipped with seminorms that satisfy~\eqref{assumption:norm1} and~\eqref{assumption:norm3}.
    Suppose further that patches are bounded~\eqref{assumption:refine4} and that there hold local efficiency~\eqref{eq:local-efficiency}
    as well as the oscillation bound~\eqref{assumption:osc}.
    Then, \textsl{a priori} convergence~\eqref{eq:a_priori_konv} yields
    \begin{equation}
    \label{eq:msv-estimators-bounded}
        C_\xi := \sup_{\ell \in \N_0} \xi_\ell < \infty.
    \end{equation}
\end{lemma}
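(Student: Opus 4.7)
The plan is to derive the bound directly from the local boundedness estimate~\eqref{eq:local-boundedness} of Proposition~\ref{prop:local-efficiency}(ii), which is already at our disposal under the present hypotheses (local efficiency plus oscillation bound), and then sum over the mesh via Proposition~\ref{prop:sum-subadditive-seminorms}.

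More concretely, for $(\xi, v) \in \{ (\eta, u), (\zeta, z) \}$ and any $\TT_\ell \in \T$, I would first square and sum~\eqref{eq:local-boundedness} over all $T \in \TT_\ell$ to obtain
\begin{equation*}
	\Cbdd^{-1} \, \xi_\ell^2
	\;\leq\;
	\sum_{T \in \TT_\ell} \Bigl[ \norm{v_\ell}_{\XX(\Omega_\ell[T])}^2 + \norm{v}_{\XX(\Omega_\ell[T])}^2 + \norm{D^\xi}_{\DD(\Omega_\ell[T])}^2 \Bigr].
\end{equation*}
Then I would apply Proposition~\ref{prop:sum-subadditive-seminorms} with $\UU_\coarse := \TT_\ell$ to each of the three terms, noting that $\Omega_\ell[\TT_\ell] = \overline{\Omega}$ (since every element lies in its own patch), so that by~\eqref{assumption:norm3} the global patch-seminorm equals the natural norm. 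This gives
\begin{equation*}
	\Cbdd^{-1} \, \xi_\ell^2
	\;\leq\;
	(\Cpatch^2 + 1) \Bigl[ \norm{v_\ell}_\XX^2 + \norm{v}_\XX^2 + \norm{D^\xi}_\DD^2 \Bigr].
\end{equation*}

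To finish, I would invoke \textsl{a priori} convergence~\eqref{eq:a_priori_konv}: since $\norm{v_\infty - v_\ell}_\XX \to 0$, the sequence $(\norm{v_\ell}_\XX)_{\ell \in \N_0}$ is bounded, say by some constant depending only on $v_\infty$ and the initial iterate. Combining this with the above display yields a uniform bound
\begin{equation*}
	\xi_\ell^2
	\;\leq\;
	\Cbdd (\Cpatch^2 + 1) \Bigl[ \sup_{\ell \in \N_0} \norm{v_\ell}_\XX^2 + \norm{v}_\XX^2 + \norm{D^\xi}_\DD^2 \Bigr]
	\;<\; \infty,
\end{equation*}
which establishes~\eqref{eq:msv-estimators-bounded}. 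I do not anticipate any substantive obstacle: all three inputs (local boundedness, patch summation, \textsl{a priori} convergence) are already stated, and the only care needed is to verify that Proposition~\ref{prop:sum-subadditive-seminorms} applies with $\UU_\ell = \TT_\ell$ and that $\Omega_\ell[\TT_\ell] = \overline{\Omega}$, which is immediate from the definition of the patch.
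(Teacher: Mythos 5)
Your proposal is correct and follows essentially the same route as the paper: apply the local boundedness estimate~\eqref{eq:local-boundedness} elementwise, sum over $\TT_\ell$ using Proposition~\ref{prop:sum-subadditive-seminorms} with $\UU_\coarse = \TT_\ell$ together with~\eqref{assumption:norm3}, and then absorb the $\norm{v_\ell}_\XX$ contribution by the uniform bound coming from \textsl{a priori} convergence. The only cosmetic slip is the phrase \enquote{square and sum}: the estimate~\eqref{eq:local-boundedness} is already stated for $\xi_\ell(T)^2$, so no additional squaring is needed before summing.
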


\begin{proof}
    Proposition~\ref{prop:local-efficiency}{\normalfont\rmfamily(ii)} guarantees local boundedness~\eqref{eq:local-boundedness} of the estimator.
    With this, Propostion~\ref{prop:sum-subadditive-seminorms} and~\eqref{assumption:norm3} yield, for all $\ell \in \N_0$, that
    \begin{align*}
        \xi_\ell^2
        &=
        \sum_{T \in \TT_\ell} \xi_\ell (T)^2
        \eqreff*{eq:local-boundedness}{\le}
        \Cbdd \sum_{T \in \TT_\ell} \big(
            \norm{v_\ell}_{\XX(\Omega_\ell[T])}^2
            + \norm{v}_{\XX(\Omega_\ell[T])}^2
            + \norm{D^\xi}_{\DD(\Omega_\ell[T])}^2
        \big)
        \\
        &\eqreff*{eq:sum-subadditive-seminorms}{\le}
        \Cbdd \big( \Cpatch^2 + 1 \big) \big(
            \norm{v_\ell}_{\XX}^2
            + \norm{v}_{\XX}^2
            + \norm{D^\xi}_{\DD}^2
        \big).
    \end{align*}
    This, together with \textsl{a priori} convergence~\eqref{eq:a_priori_konv} of
    $(v_\ell)_{\ell \in \N_0}$
    guarantees that
    \begin{equation*}
        \xi_\ell^2
        \le
        \Cbdd \big( \Cpatch^2 + 1 \big) \big(
            \sup_{\ell \in \N_0} \norm{v_\ell}_{\XX}^2
            + \norm{v}_{\XX}^2
            + \norm{D^\xi}_{\DD}^2
        \big)
        =: C_\xi 
        \eqreff*{eq:a_priori_konv}{<}
        \infty,
    \end{equation*}
    which concludes the proof.
\end{proof}

\begin{proposition}[{\cite[Lemma 4.1]{msv2008}}]
\label{prop:T-infty-patch}
    Suppose that the refinement strategy satisfies~\eqref{assumption:refine2},~\eqref{assumption:refine3} and~\eqref{assumption:refine5}.
    Define the set
    \begin{equation}\label{eq:def_t_infty}
    	\TT_\infty := \bigcup_{\ell'\in\N_0} \bigcap_{\ell\geq\ell'} \TT_\ell
    \end{equation}
    of all elements $T \in \bigcup_{\ell \in \N_0} \TT_\ell$ which are not refined after finitely many steps.
    Let $T \in \TT_\infty$ be arbitrary.
    Then, there exists $\ell_0 \in \N_0$ such that
    \begin{equation}
        \TT_\ell [ T ]
        =
        \TT_{\ell_0} [ T ]
        \subseteq
        \TT_\infty
        \quad \text{for all }
        \ell \geq \ell_0.
    \end{equation}
\end{proposition}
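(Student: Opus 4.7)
The plan is to show that the patch $\TT_\ell[T]$ can only change finitely often by combining the size-reduction property (R3) with the comparable-size property (R5), and then conclude that every patch element must therefore lie in $\TT_\infty$.

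First, I would fix $T \in \TT_\infty$ and invoke the definition~\eqref{eq:def_t_infty} to obtain $\ell_T \in \N_0$ such that $T \in \TT_\ell$ for all $\ell \geq \ell_T$. Next, I would establish the structural observation that any element $T' \in \TT_\ell[T]$ with $\ell \geq \ell_T$ is a descendant (possibly $T'$ itself) of some element in the initial patch $\TT_{\ell_T}[T]$. Indeed, if $T' \in \TT_{\ell+1}[T] \setminus \TT_\ell[T]$, then by~\eqref{assumption:refine2} it is a proper child of some $\widetilde T \in \TT_\ell \setminus \TT_{\ell+1}$; since $T' \subseteq \widetilde T$ and $\overline{T'} \cap T \neq \emptyset$, we get $\overline{\widetilde T} \cap T \neq \emptyset$, so $\widetilde T \in \TT_\ell[T]$. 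Iterating this backward to level $\ell_T$ yields the claim.

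The crucial step is bounding the refinement depth. For any $T' \in \TT_\ell[T]$ with $\ell \geq \ell_T$, the comparable-size property~\eqref{assumption:refine5} gives $|T'| \geq |T|/\Csize$. On the other hand, if $T'$ is a $k$-generation descendant of some $T_0 \in \TT_{\ell_T}[T]$ (with $k \geq 1$ proper refinements in the chain), then iterating~\eqref{assumption:refine3} yields $|T'| < \rhochild^k \, |T_0| \leq \rhochild^k \, \Csize \, |T|$. Combining both bounds forces
\begin{equation*}
	\rhochild^k \geq \Csize^{-2},
	\qquad \text{i.e.,} \qquad
	k \leq \frac{2\log \Csize}{\log(1/\rhochild)} =: K.
\end{equation*}
Hence every element ever appearing in $\bigcup_{\ell \geq \ell_T} \TT_\ell[T]$ is a descendant of at most depth $K$ of some element in the finite set $\TT_{\ell_T}[T]$, so the family $\{\TT_\ell[T]\}_{\ell \geq \ell_T}$ draws from a finite reservoir of possible elements. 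Since any strict change of the patch from step $\ell$ to $\ell+1$ requires refining at least one of its elements and thus decreasing the total cardinality of not-yet-refined reservoir members, only finitely many changes can occur, and there exists $\ell_0 \geq \ell_T$ with $\TT_\ell[T] = \TT_{\ell_0}[T]$ for all $\ell \geq \ell_0$.

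Finally, I would argue $\TT_{\ell_0}[T] \subseteq \TT_\infty$: if some $T' \in \TT_{\ell_0}[T]$ were refined at a later step $\ell \geq \ell_0$, then $T' \notin \TT_{\ell+1}$ but by~\eqref{assumption:refine2} its children $T'' \subseteq T'$ with $\overline{T''} \cap T \neq \emptyset$ belong to $\TT_{\ell+1}[T]$, contradicting the stability of the patch. Therefore every $T' \in \TT_{\ell_0}[T]$ stays in $\TT_\ell$ for all $\ell \geq \ell_0$, hence $T' \in \TT_\infty$. The main obstacle is the bookkeeping around ``elements enter and leave the patch'', which is handled cleanly by the descendant argument and the uniform depth bound $K$ derived from (R3) and (R5).
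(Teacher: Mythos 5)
Your argument is correct and is essentially the proof of \cite[Lemma~4.1]{msv2008} that the paper cites rather than reproduces: the patch relation is symmetric, so \eqref{assumption:refine5} applied at levels $\ell_T$ and $\ell$ sandwiches the measure of any patch element between $\Csize^{-1}|T|$ and $\rhochild^k\,\Csize|T|$, which caps the refinement depth via \eqref{assumption:refine3} and forces the patch to stabilize. The only points left implicit — that every element of $\TT_{\ell+1}[T]\setminus\TT_\ell[T]$ is contained in a unique parent from $\TT_\ell[T]$ (a consequence of \eqref{assumption:refine2} together with the mesh axioms), and that a once-refined element never reappears — are routine and do not affect the validity of the proof.
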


The next result states uniform convergence of the sequence of local mesh-size functions
\begin{equation}
\label{eq:def-local-meshsize}
    h_\ell |_T := |T|^{1/d}
    \quad \text{for all }
    \ell \in \N_0
    \text{ and }
    T \in \TT_\ell.
\end{equation}
Note that $h_\ell\in L^\infty (\Omega)$ is well-defined, since the skeleton
\begin{equation}
\label{eq:def-skeleton}
    \Sigma_\ell
    :=
    \bigcup \big\{ \partial T \colon T \in \TT_\ell \big\}
\end{equation}
has measure zero for all $\ell \in \N_0$.

\begin{proposition}[{\cite[Lemma 4.3]{msv2008}}]
\label{prop:local-meshsize-convergence}
    Assume that the refinement strategy satisfies~\eqref{assumption:refine2} and~\eqref{assumption:refine3}.
    Then, there exists a function
    $h_\infty \in L^\infty(\Omega)$
    such that
    \begin{equation}
    \label{eq:local-meshsize-convergence}
        \norm{ h_\infty - h_\ell }_{L^\infty(\Omega)} \to 0
        \quad \text{as }
        \ell \to \infty.
    \end{equation}
\end{proposition}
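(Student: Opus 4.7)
The plan is to construct $h_\infty$ as a pointwise a.e.\ limit of the sequence $(h_\ell)$ and then upgrade this to uniform convergence by using the geometric size-decay from~\eqref{assumption:refine3}. First, I fix any $x \in \Omega \setminus \bigcup_{\ell \in \N_0} \Sigma_\ell$, which is a full-measure set; for each $\ell$ there is a unique $T_\ell(x) \in \TT_\ell$ containing $x$, and by~\eqref{assumption:refine2} these elements are nested, $T_{\ell+1}(x) \subseteq T_\ell(x)$. Hence $h_\ell(x) = |T_\ell(x)|^{1/d}$ is monotonically non-increasing and bounded below by $0$, so the pointwise limit $h_\infty(x) := \lim_{\ell\to\infty} h_\ell(x)$ exists. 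I extend $h_\infty$ arbitrarily (say by $0$) on the measure-zero skeleton $\bigcup_\ell \Sigma_\ell$; since $L^\infty$ is taken modulo null sets, this is harmless.

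The quantitative input comes from~\eqref{assumption:refine3}: each proper refinement shrinks the child measure by a factor of $\rhochild$. Defining the depth $d(T)$ of an element $T \in \bigcup_\ell \TT_\ell$ as the number of proper refinements along the ancestor chain from $\TT_0$ to $T$, iteration yields
\begin{equation*}
	|T|^{1/d}
	\leq
	\rhochild^{d(T)/d} \, \max_{T_0 \in \TT_0} |T_0|^{1/d},
\end{equation*}
so deep elements are uniformly small. Moreover,~\eqref{assumption:refine2} and~\eqref{assumption:refine3} together force each refined element to have only finitely many children (at most $\lceil 1/\rhochild\rceil$, since the children partition the parent in measure while each has measure strictly less than $\rhochild |T|$). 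Consequently, the set $\mathcal{F}_{k^\star}$ of elements in $\bigcup_\ell \TT_\ell$ of depth at most $k^\star$ is finite for every $k^\star \in \N_0$.

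To upgrade the pointwise limit to uniform convergence, I fix $\varepsilon > 0$ and choose $k^\star$ with $\rhochild^{k^\star/d} \max_{T_0 \in \TT_0} |T_0|^{1/d} < \varepsilon$. I split the finite set $\mathcal{F}_{k^\star}$ into elements lying in $\TT_\infty$ (from~\eqref{eq:def_t_infty}) and elements that are eventually refined, and denote by $L \in \N_0$ one more than the maximum refinement level among the latter. For $\ell \geq L$ and $x \in \Omega \setminus \Sigma_\ell$, the element $T_\ell(x)$ either belongs to $\TT_\infty$---in which case the nested structure from~\eqref{assumption:refine2} yields $T_m(x) = T_\ell(x)$ for all $m \geq \ell$ and hence $h_\infty(x) = h_\ell(x)$---or its depth exceeds $k^\star$, so that $h_\ell(x) < \varepsilon$ and by monotonicity $h_\infty(x) \leq h_\ell(x) < \varepsilon$. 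In either case $|h_\ell(x) - h_\infty(x)| < \varepsilon$, which gives $\norm{h_\infty - h_\ell}_{L^\infty(\Omega)} < \varepsilon$ for all $\ell \geq L$.

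The main obstacle I expect is the bookkeeping around $\TT_\infty$, specifically verifying that an element which leaves the mesh through refinement cannot reappear at a later level, so that \enquote{$T \in \TT_\ell \cap \TT_\infty$} implies $T \in \TT_m$ for all $m \geq \ell$. This rests on interpreting~\eqref{assumption:refine2} as \emph{replacement} of parents by strictly smaller children (via~\eqref{assumption:refine3}), making removal irreversible. A secondary technicality is the finite-children claim needed to conclude that $\mathcal{F}_{k^\star}$ is finite without recourse to~\eqref{assumption:refine4}, which is not among the hypotheses of the proposition.
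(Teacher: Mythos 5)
Your overall strategy is sound and matches the standard argument from~\cite{msv2008} that the paper cites (the paper itself provides no proof): establish the pointwise monotone limit via the nesting from~\eqref{assumption:refine2}, then upgrade to uniform convergence by observing that all but finitely many elements have depth (and hence size) below a prescribed threshold, and the remaining finitely many either stabilize or are refined away before a fixed level~$L$. The bookkeeping around $\TT_\infty$ that you flag as a concern is in order: if $T$ is refined at step $\ell$, then~\eqref{assumption:refine1} removes it from $\TT_{\ell+1}$, and~\eqref{assumption:refine2} forces any element of $\TT_m$, $m>\ell$, that meets the interior of $T$ to lie inside one of the proper children of $T$, so $T$ cannot reappear.

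There is, however, one genuinely flawed step, precisely where you anticipate trouble. The parenthetical bound of \enquote{at most $\lceil 1/\rhochild\rceil$ children} is not a consequence of~\eqref{assumption:refine2}--\eqref{assumption:refine3}; what the measure-partition argument gives is the \emph{reverse} inequality. If $T$ is refined into $n$ children, then $|T| = \sum_{T'} |T'| < n\,\rhochild\,|T|$, i.e.\ $n > 1/\rhochild$: a \emph{lower} bound. One can refine $T$ into arbitrarily many children of measure $<\rhochild|T|$ without violating~\eqref{assumption:refine3}, so there is no uniform upper bound on the number of children from this assumption. Fortunately, your conclusion---that $\mathcal{F}_{k^\star}$ is finite---survives by a simpler argument that does not need~\eqref{assumption:refine4} or any quantitative bound: each $\TT_\ell$ is a finite set, each element is refined at most once (non-reappearance, as above), and therefore each element of $\bigcup_\ell\TT_\ell$ has only finitely many children. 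Since $\TT_0$ is finite, finiteness of $\mathcal{F}_{k^\star}$ follows by induction on depth. Replacing the erroneous parenthetical with this observation makes the proof complete.
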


%%%%%%%%%%%%%%%%%%%%%%%%%%%%%%%%%%%%%%%%%%%%%%%%%%%%%%%%%%%%%%%%%%%%%%%%%
\subsection{Proof of Theorem~\ref{thm:msv-result}} \label{subsec:msv-proof}
%%%%%%%%%%%%%%%%%%%%%%%%%%%%%%%%%%%%%%%%%%%%%%%%%%%%%%%%%%%%%%%%%%%%%%%%%
We split the proof of Theorem~\ref{thm:msv-result} into three lemmata.

\begin{lemma}
\label{lemma:good-ugly-convergence}
    Suppose the assumptions of Theorem~\ref{thm:msv-result}.
    Then, \textsl{a priori} convergence yields
    \begin{equation}
        \eta_\ell ( \TTgood_\ell \cup \TTugly_\ell )^2 \,
        \zeta_\ell ( \TTgood_\ell \cup \TTugly_\ell )^2
        \to 0
        \quad \text{as }
        \ell \to \infty.
    \end{equation}
\end{lemma}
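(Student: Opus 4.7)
The plan is to decouple the product and prove that each of $\eta_\ell(\TTgood_\ell \cup \TTugly_\ell)$ and $\zeta_\ell(\TTgood_\ell \cup \TTugly_\ell)$ tends to zero separately; then multiplying two null sequences gives the claim. The key observation making this straightforward is that the partition $\TT_\ell = \TTgood_\ell \cup \TTugly_\ell \cup \TTbad_\ell$ depends only on the mesh geometry (through $\rhodeff$ and the limit set $\TT_\infty$), not on the choice of estimator. Hence the auxiliary Propositions~\ref{prop:good-convergence} and~\ref{prop:ugly-convergence}, which are formulated for a generic estimator $\xi$, can be applied to both the primal indicator $\eta$ and the dual indicator $\zeta$ on the very same sets.

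To carry this out, I would first check that the hypotheses of Theorem~\ref{thm:msv-result} do supply the assumptions needed by Propositions~\ref{prop:good-convergence} and~\ref{prop:ugly-convergence}. Proposition~\ref{prop:good-convergence} requires~\eqref{assumption:norm1},~\eqref{assumption:norm3},~\eqref{assumption:refine2}--\eqref{assumption:refine4}, and~\eqref{assumption:ldeff}--\eqref{assumption:osc}; Proposition~\ref{prop:ugly-convergence} additionally requires~\eqref{assumption:norm2},~\eqref{assumption:refine5}, and local (rather than local discrete) efficiency~\eqref{eq:local-efficiency}. The latter is not directly assumed in Theorem~\ref{thm:msv-result}, but it is implied by~\eqref{assumption:ldeff} together with~\eqref{assumption:approx} and the subadditivity of $\norm{\cdot}_{\XX}$ via Proposition~\ref{prop:local-efficiency}(i). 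With this verified, applying both propositions once with $(\xi,v) = (\eta,u)$ and once with $(\xi,v) = (\zeta,z)$ yields
\begin{equation*}
    \eta_\ell(\TTgood_\ell),\;\eta_\ell(\TTugly_\ell),\;\zeta_\ell(\TTgood_\ell),\;\zeta_\ell(\TTugly_\ell) \to 0
    \quad \text{as } \ell \to \infty.
\end{equation*}

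Since $\TTgood_\ell \cap \TTugly_\ell = \emptyset$, the squared estimator decomposes as $\xi_\ell(\TTgood_\ell \cup \TTugly_\ell)^2 = \xi_\ell(\TTgood_\ell)^2 + \xi_\ell(\TTugly_\ell)^2$ for each $\xi \in \{\eta,\zeta\}$, hence
\begin{equation*}
    \eta_\ell(\TTgood_\ell \cup \TTugly_\ell) \to 0
    \quad \text{and} \quad
    \zeta_\ell(\TTgood_\ell \cup \TTugly_\ell) \to 0
    \quad \text{as } \ell \to \infty,
\end{equation*}
and multiplying gives the claim.

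There is no real obstacle in this lemma: it is essentially a bookkeeping step that packages the MSV-type results for $\eta$ and $\zeta$ into a product statement. The work actually lies elsewhere, namely in controlling the contribution of $\TTbad_\ell$ in the subsequent lemmata, where one cannot make each factor vanish and must instead use the marking condition~\eqref{eq:mark_a} together with the uniform bound from Lemma~\ref{lemma:msv-estimators-bounded} to dominate one factor while the other is driven to zero.
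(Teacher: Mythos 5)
Your proof is correct and takes essentially the same route as the paper, whose own argument is the one-liner ``This is a direct consequence of Propositions~\ref{prop:good-convergence}--\ref{prop:ugly-convergence}.'' You have merely unpacked what ``direct consequence'' means (verifying the hypotheses via Proposition~\ref{prop:local-efficiency}(i), applying the propositions to both $(\eta,u)$ and $(\zeta,z)$, decomposing over the disjoint sets $\TTgood_\ell$ and $\TTugly_\ell$, and multiplying two null sequences), which is exactly the intended reading.
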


\begin{proof}
    This is a direct consequence of Propositions~\ref{prop:good-convergence}--\ref{prop:ugly-convergence}.
\end{proof}

\begin{lemma}
\label{lemma:msv-proof-mixed-terms}
    Suppose the assumptions of Theorem~\ref{thm:msv-result}.
    Then, \textsl{a priori} convergence yields
    \begin{equation}
        \zeta_\ell ( \TTbad_\ell )^2
        \eta_\ell ( \TTgood_\ell \cup \TTugly_\ell )^2 
        + \eta_\ell ( \TTbad_\ell )^2
        \zeta_\ell ( \TTgood_\ell \cup \TTugly_\ell )^2 
        \to 0 
        \quad \text{as }
        \ell \to \infty.
    \end{equation}
\end{lemma}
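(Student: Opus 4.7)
The plan is to combine two ingredients that have already been established in this subsection: first, the uniform boundedness of the global estimators provided by Lemma~\ref{lemma:msv-estimators-bounded}, and second, the vanishing of the contributions from the good and ugly subsets furnished by Propositions~\ref{prop:good-convergence}--\ref{prop:ugly-convergence}. The statement is then a simple ``bounded times null sequence'' argument applied separately to the two summands.

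First, I would apply Lemma~\ref{lemma:msv-estimators-bounded} to $\eta_\ell$ and $\zeta_\ell$. Its hypotheses are available under the assumptions of Theorem~\ref{thm:msv-result}: the seminorms satisfy~\eqref{assumption:norm1} and~\eqref{assumption:norm3}, patches are bounded by~\eqref{assumption:refine4}, and local efficiency~\eqref{eq:local-efficiency} together with the oscillation bound~\eqref{assumption:osc} follow from~\eqref{assumption:ldeff}--\eqref{assumption:osc} and the natural approximation property~\eqref{assumption:approx} via Proposition~\ref{prop:local-efficiency}(i). Hence there exist finite constants $C_\eta, C_\zeta$ with $\eta_\ell \le C_\eta$ and $\zeta_\ell \le C_\zeta$ for all $\ell \in \N_0$. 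Since $\TTbad_\ell \subseteq \TT_\ell$, the definition~\eqref{eq:def_ref-ind} yields at once
\begin{equation*}
	\eta_\ell(\TTbad_\ell) \le C_\eta
	\qquad \text{and} \qquad
	\zeta_\ell(\TTbad_\ell) \le C_\zeta
	\qquad \text{for all } \ell \in \N_0.
\end{equation*}

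Second, Propositions~\ref{prop:good-convergence} and~\ref{prop:ugly-convergence}, applied both for $(\xi,v) = (\eta, u)$ and $(\xi,v) = (\zeta, z)$, give $\xi_\ell(\TTgood_\ell) \to 0$ and $\xi_\ell(\TTugly_\ell) \to 0$ as $\ell \to \infty$. Since the sets $\TTgood_\ell$, $\TTugly_\ell$, and $\TTbad_\ell$ are pairwise disjoint by construction, the elementwise definition~\eqref{eq:def_ref-ind} of the global seminorms yields
\begin{equation*}
	\xi_\ell(\TTgood_\ell \cup \TTugly_\ell)^2
	= \xi_\ell(\TTgood_\ell)^2 + \xi_\ell(\TTugly_\ell)^2
	\xrightarrow{\ell\to\infty} 0
	\qquad \text{for } \xi \in \{\eta, \zeta\}.
\end{equation*}

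Third, combining the uniform bounds on the bad-set contributions with the vanishing of the good-and-ugly contributions, we obtain
\begin{equation*}
	\zeta_\ell(\TTbad_\ell)^2 \, \eta_\ell(\TTgood_\ell \cup \TTugly_\ell)^2
	\le C_\zeta^2 \, \eta_\ell(\TTgood_\ell \cup \TTugly_\ell)^2 \to 0,
\end{equation*}
and, by the symmetric argument,
\begin{equation*}
	\eta_\ell(\TTbad_\ell)^2 \, \zeta_\ell(\TTgood_\ell \cup \TTugly_\ell)^2
	\le C_\eta^2 \, \zeta_\ell(\TTgood_\ell \cup \TTugly_\ell)^2 \to 0,
\end{equation*}
which together give the claim. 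There is no substantive obstacle here: the lemma is purely a bookkeeping step that reduces the analysis of $\eta_\ell \zeta_\ell$ to the remaining, genuinely nontrivial term $\eta_\ell(\TTbad_\ell)\,\zeta_\ell(\TTbad_\ell)$, where the marking strategy and the patch properties of the refinement will have to enter.
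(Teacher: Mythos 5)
Your proof is correct and takes essentially the same approach as the paper: invoke Propositions~\ref{prop:good-convergence}--\ref{prop:ugly-convergence} for the vanishing of the good-and-ugly contributions, invoke Lemma~\ref{lemma:msv-estimators-bounded} for the uniform bound on the bad-set contributions, and combine by a bounded-times-null argument.
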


\begin{proof}
    Propositions~\ref{prop:good-convergence} and~\ref{prop:ugly-convergence} yield that
    \begin{equation}
    \label{eq:mixed-terms-good-convergence}
        \eta_\ell ( \TTgood_\ell \cup \TTugly_\ell )^2
        + \zeta_\ell ( \TTgood_\ell \cup \TTugly_\ell )^2
        \to 0 
        \quad \text{as }
        \ell \to \infty.
    \end{equation}
    With Lemma~\ref{lemma:msv-estimators-bounded}, it follows that
    \begin{align*}
        \eta_\ell ( \TTbad_\ell )^2
        \zeta_\ell ( \TTgood_\ell \cup \TTugly_\ell )^2
        + \zeta_\ell ( \TTbad_\ell )^2
        \eta_\ell ( \TTgood_\ell \cup \TTugly_\ell )^2&
        \\
        \eqreff*{eq:msv-estimators-bounded}{\le}
        C_\eta^2 \, \zeta_\ell ( \TTgood_\ell \cup \TTugly_\ell )^2
        + C_\zeta^2 \, \eta_\ell ( \TTgood_\ell \cup \TTugly_\ell )^2 &
        \to 0
        \quad \text{as }
        \ell \to \infty.
    \end{align*}
    This concludes the proof.
\end{proof}

\begin{lemma}
\label{lemma:msv-bad-convergence}
    Suppose the assumptions of Theorem~\ref{thm:msv-result}.
    Then, \textsl{a priori} convergence yields
    \begin{equation}
        \zeta_\ell ( \TTbad_\ell )^2
        \eta_\ell ( \TTbad_\ell )^2
        \to 0 
        \quad \text{as }
        \ell \to \infty.
    \end{equation}
\end{lemma}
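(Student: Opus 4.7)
The plan is to combine the marking criterion (which forces the product of maximum local indicators on unrefined elements to vanish) with a tail-control argument based on local boundedness, thereby upgrading pointwise smallness of one indicator family to $\ell^2$-smallness on $\TTbad_\ell$.

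First, I would observe that every $T \in \TTbad_\ell$ lies in $\TT_\ell[T] \subseteq \TT_\infty$, so $T$ is in all subsequent meshes and therefore not marked at step $\ell$ by~\eqref{assumption:refine1}. Thus $\TTbad_\ell \cap \MM_\ell = \emptyset$, hence $\MM_\ell \subseteq \TTgood_\ell \cup \TTugly_\ell$, and Propositions~\ref{prop:good-convergence}--\ref{prop:ugly-convergence} give $\eta_\ell(\MM_\ell) + \zeta_\ell(\MM_\ell) \to 0$. Continuity of $V$ at $(0,0)$ and Marking Strategy~\ref{alg:mark_a} then yield
\begin{equation*}
	\alpha_\ell := \bigl( \max_{T \in \TTbad_\ell} \eta_\ell(T) \bigr) \bigl( \max_{T \in \TTbad_\ell} \zeta_\ell(T) \bigr) \to 0
	\quad \text{as } \ell \to \infty.
\end{equation*}

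Second, I would argue by a subsequence-of-subsequence contradiction. If the claimed convergence failed, I would extract $\ell_k \to \infty$ with $\eta_{\ell_k}(\TTbad_{\ell_k}) \zeta_{\ell_k}(\TTbad_{\ell_k}) \geq \delta > 0$. Since both maxima above are uniformly bounded (by Lemma~\ref{lemma:msv-estimators-bounded}), a further subsequence allows me to assume, without loss of generality, that $\max_{T \in \TTbad_\ell} \eta_\ell(T) \to 0$. For each fixed $T \in \TT_\infty$, Proposition~\ref{prop:T-infty-patch} guarantees $T \in \TTbad_\ell$ for all $\ell$ sufficiently large, and hence $\eta_\ell(T) \to 0$ pointwise on $\TT_\infty$.

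Third---and this is the technical heart of the argument---I would upgrade this pointwise convergence to $\eta_\ell(\TTbad_\ell) \to 0$ by a finite-plus-tail decomposition. Given $\varepsilon > 0$, choose a finite set $\TT_\varepsilon \subseteq \TT_\infty$ whose complement region $\omega_\varepsilon \subseteq \Omega$ satisfies $\norm{u_\infty}_{\XX(\omega_\varepsilon)}^2 + \norm{u}_{\XX(\omega_\varepsilon)}^2 + \norm{D^\eta}_{\DD(\omega_\varepsilon)}^2 < \varepsilon$, using the absolute continuity~\eqref{assumption:norm2} of the seminorms and the fact that $\bigcup \TT_\infty$ is exhausted by finite subfamilies. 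For $\ell$ large, $\TT_\varepsilon \subseteq \TTbad_\ell$, so I would split
\begin{equation*}
	\eta_\ell(\TTbad_\ell)^2 = \eta_\ell(\TT_\varepsilon)^2 + \eta_\ell(\TTbad_\ell \setminus \TT_\varepsilon)^2.
\end{equation*}
The finite-sum piece $\eta_\ell(\TT_\varepsilon)^2$ vanishes by pointwise convergence on the fixed index set. The tail piece is controlled by local boundedness~\eqref{eq:local-boundedness} together with subadditivity~\eqref{assumption:norm1} (via Proposition~\ref{prop:sum-subadditive-seminorms}), yielding a bound of the form $\Cbdd (\Cpatch^2+1) [\norm{u_\ell}_{\XX(\omega_\varepsilon)}^2 + \norm{u}_{\XX(\omega_\varepsilon)}^2 + \norm{D^\eta}_{\DD(\omega_\varepsilon)}^2]$, which is $O(\varepsilon)$ in the limit $\ell \to \infty$ thanks to the $\XX$-convergence $u_\ell \to u_\infty$. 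Letting $\varepsilon \to 0$ then gives $\eta_\ell(\TTbad_\ell) \to 0$ along the subsequence, and combined with the uniform $\zeta_\ell$-bound from Lemma~\ref{lemma:msv-estimators-bounded} this contradicts the lower bound $\delta$.

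The main obstacle I anticipate is the tail estimate in the third step: the patches $\{\Omega_\ell[T] : T \in \TTbad_\ell \setminus \TT_\varepsilon\}$ depend on $\ell$ and may include neighbor patches that protrude slightly outside the nominal complement region $\omega_\varepsilon$. Handling this requires enlarging the reference region by a patch-sized buffer (whose measure still tends to zero as $\TT_\varepsilon$ exhausts $\TT_\infty$) and then invoking absolute continuity~\eqref{assumption:norm2} carefully in order to retain an $O(\varepsilon)$-bound that is uniform in $\ell$.
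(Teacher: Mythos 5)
Your argument is correct, but it takes a genuinely different route from the paper's proof. The paper encodes the double sum $\eta_\ell(\TTbad_\ell)^2\,\zeta_\ell(\TTbad_\ell)^2$ as a double integral $\int_{\Omegabad}\!\int_{\Omegabad} f_\ell(x,y)\,\mathrm{d}x\,\mathrm{d}y$ of piecewise-constant step functions on $\Omegabad\times\Omegabad$, shows that the marking criterion~\eqref{eq:mark_a} forces $f_\ell\to 0$ pointwise a.e., and concludes by Lebesgue dominated convergence with a dominating function built from the uniform bound of Lemma~\ref{lemma:msv-estimators-bounded}. You instead argue by contradiction: after extracting a subsequence along which one of the two indicator maxima on $\TTbad_\ell$ decays, you upgrade the resulting pointwise decay on $\TT_\infty$ to $\ell^2$-decay $\eta_\ell(\TTbad_\ell)\to 0$ via a finite-plus-tail decomposition of $\TT_\infty$, invoking local boundedness~\eqref{eq:local-boundedness}, Proposition~\ref{prop:sum-subadditive-seminorms}, and absolute continuity~\eqref{assumption:norm2}. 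The patch-protrusion obstacle you flag is real but surmountable exactly as you suggest: for $\ell$ large the patch of each $T\in\TT_\infty$ stabilizes (Proposition~\ref{prop:T-infty-patch}), so the buffer region $\bigcup\{\Omega_\ell[T]:T\in\TTbad_\ell\setminus\TT_\varepsilon\}$ is eventually contained in the union of the limiting patches of $\TT_\infty\setminus\TT_\varepsilon$, and by~\eqref{assumption:refine4}--\eqref{assumption:refine5} each such patch has measure $\lesssim|T|$; since $\sum_{T\in\TT_\infty}|T|\le|\Omega|<\infty$, this buffer measure tends to zero as $\TT_\varepsilon\nearrow\TT_\infty$, and~\eqref{assumption:norm2} applies. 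One small inaccuracy to correct: in the tail estimate you should split $\norm{u_\ell}_{\XX(\omega_\ell)}\le\norm{u_\ell-u_\infty}_\XX+\norm{u_\infty}_{\XX(\omega_\ell)}$ and apply absolute continuity to the \emph{fixed} limit $u_\infty$ rather than to $u$ or $u_\ell$; the global remainder then vanishes by \textsl{a priori} convergence~\eqref{eq:a_priori_konv}. The paper's DCT route is shorter, while your argument is more elementary, makes the role of the subadditive and absolutely continuous seminorm structure explicit, and mirrors the finite-plus-tail mechanism already used behind Propositions~\ref{prop:good-convergence}--\ref{prop:ugly-convergence}.
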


\begin{proof}
    Note that Proposition~\ref{prop:T-infty-patch} implies $\TT_\infty = \bigcup_{\ell \in \N_0} \TTbad_\ell$.
    With $\TT_\infty$ from~\eqref{eq:def_t_infty}, let
    \begin{equation*}
        \Omegabad := \bigcup \TT_\infty
        \quad \text{and} \quad
        \Sigma_\infty := \bigcup_{\ell \in \N_0} \Sigma_\ell.
    \end{equation*}
    For $x \in \Omegabad \backslash \Sigma_\infty$, let $T_x \in \TT_\infty$ be the unique element with $x \in T_x$ and choose the minimal index $\ell_x \in \N_0$ such that $T_x \in \TTbad_{\ell_x}$.
    For $\ell \in \N_0$ define $f_\ell \in L^1(\Omegabad \times \Omegabad)$ by
    \begin{equation*}
        f_\ell (x,y) :=
        \begin{cases}
            \frac{1}{|T_x|} 
            \eta_{\ell} (T_x) ^2
            \frac{1}{|T_y|} 
            \zeta_{\ell} (T_y) ^2
            & \text{if } 
            x,y \in \Omegabad \backslash \Sigma_\infty
            \text{ and }
            \ell \ge \ell_x,
            \\
            0                                   
            & \text{else}.
        \end{cases}
    \end{equation*}
    Then, we have the integral representation
    \begin{equation*}
        \int_{\Omegabad} \int_{\Omegabad} f_\ell(x,y) \d x \d y
        =
        \eta_\ell ( \TTbad_\ell )^2
        \zeta_\ell ( \TTbad_\ell )^2 
        \quad \text{for all }
        \ell \in \N_0.
    \end{equation*}
    Moreover, note that $f(x,y)>0$ yields that $T_x, T_y \in \TT_\ell \backslash \MM_\ell$.
    Property~\eqref{assumption:refine1} of the refinement strategy guarantees
    $\MM_\ell \subseteq \TTgood_\ell \cup \TTugly_\ell$
    for all
    $\ell \in \N_0$
    and Propositions~\ref{prop:good-convergence}--\ref{prop:ugly-convergence} imply that
    \begin{equation*}
        \eta_\ell ( \MM_\ell ) 
        + \zeta_\ell ( \MM_\ell )
        \to 0
        \quad \text{as }
        \ell \to \infty.
    \end{equation*}
    The marking criterion~\eqref{eq:mark_a} and continuity of $V$ at $(0,0)$ yield pointwise convergence
    \begin{equation*}
        0 
        \le 
        f_\ell (x,y) 
        \eqreff*{eq:mark_a}{\le}
        \frac{1}{|T_x| |T_y|} 
        V \big( \eta_\ell \big( \MM_\ell \big), \zeta_\ell \big( \MM_\ell \big) \big)^2
        \to 0
        \quad \text{as }
        \ell \to \infty
        \text{ for all }
        x,y \in \Omegabad \backslash \Sigma_\infty.
    \end{equation*}
    Recall the constants
    $C_\eta, C_\zeta > 0$
    from Lemma~\ref{lemma:msv-estimators-bounded}.
    For all
    $\ell \in \N_0$
    define
    \begin{equation*}
        g (x,y) :=
        \frac{1}{|T_x|}
        C_\eta^2
        \frac{1}{|T_y|}
        C_\zeta^2
        \quad \text{for all }
        x,y \in \Omegabad \backslash \Sigma_\infty.
    \end{equation*}
    Then, Lemma~\ref{lemma:msv-estimators-bounded} yields $|f_\ell(x,y)| \leq g_\ell(x,y)$ for all $\ell \in \N_0$ and almost all $x,y \in \Omegabad$.
    Thus, by the Lebesgue dominated convergence Theorem~\cite[p.1015]{zeidler1990}, we get
    \begin{equation*}
        \eta_\ell ( \TTbad_\ell )^2
        \zeta_\ell ( \TTbad_\ell )^2
        =
        \int_{\Omegabad} \int_{\Omegabad} f_\ell(x,y) \d x \d y
        \to 0
        \quad \text{as }
        \ell \to \infty,
    \end{equation*}
    which concludes the proof.
\end{proof}

%\clearpage
%!TEX root = main.tex

%%%%%%%%%%%%%%%%%%%%%%%%%%%%%%%%%%%%%%%%%%%%%%%%%%%%%%%%%%%%%%%%%%%%%%%%%%%%%%%%%%%
%%%%%%%%%%%%%%%%%%%%%%%%%%%%%%%%%%%%%%%%%%%%%%%%%%%%%%%%%%%%%%%%%%%%%%%%%%%%%%%%%%%
\section{Proofs of Theorems~\ref{thm:goafem_a}--\ref{thm:goafem_b} (Setting II)}\label{sec:proofB}
%%%%%%%%%%%%%%%%%%%%%%%%%%%%%%%%%%%%%%%%%%%%%%%%%%%%%%%%%%%%%%%%%%%%%%%%%%%%%%%%%%%

%%%%%%%%%%%%%%%%%%%%%%%%%%%%%%%%%%%%%%%%%%%%%%%%%%%%%%%%%%%%%%%%%%%%%%%%%%%%%%%%%%%
\subsection{Auxiliary Results}
%%%%%%%%%%%%%%%%%%%%%%%%%%%%%%%%%%%%%%%%%%%%%%%%%%%%%%%%%%%%%%%%%%%%%%%%%%%%%%%%%%%

We start by showing uniform boundedness of the estimator.
To this end, let $(\xi, v) \in \{ (\eta, u), (\zeta, z) \}$.

\begin{lemma}\label{lemma:estimator-boundedness}
	Suppose that the error estimator $\xi_\ell$ satisfies stability~\eqref{assumption:stab} and reduction~\eqref{assumption:red}, and that there holds \textsl{a priori} convergence~\eqref{eq:a_priori_konv}.
	Then, the error estimator is uniformly bounded, i.e.,
	\begin{equation}
	\label{eq:estimator-boundedness}
		\sup_{\ell\in\N_0} \xi_\ell < \infty.
	\end{equation}
\end{lemma}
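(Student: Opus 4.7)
The plan is to apply the structural assumptions (B1) and (B2) directly to the pair $(\TT_\coarse, \TT_\fine) = (\TT_0, \TT_\ell)$, which is legitimate since every $\TT_\ell$ lies in $\T(\TT_0)$. This reduces uniform boundedness to controlling a single $S$-term along the sequence, for which a priori convergence of $(u_\ell)$ and $(z_\ell)$ will do the work.

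More concretely, I would first split
\begin{equation*}
	\xi_\ell^2
	=
	\xi_\ell(\TT_\ell \cap \TT_0)^2 + \xi_\ell(\TT_\ell \setminus \TT_0)^2.
\end{equation*}
For the first summand I apply stability (B1) with $(\TT_\coarse, \TT_\fine) = (\TT_0, \TT_\ell)$, square, and use $(a+b)^2 \le 2 a^2 + 2 b^2$ together with $\xi_0(\TT_\ell \cap \TT_0)^2 \le \xi_0^2$, which yields
\begin{equation*}
	\xi_\ell(\TT_\ell \cap \TT_0)^2
	\le
	2 \xi_0^2 + 2 S\big(\norm{u_\ell - u_0}_\XX, \norm{z_\ell - z_0}_\XX\big)^2.
\end{equation*}
For the second summand I invoke reduction (B2) to obtain
\begin{equation*}
	\xi_\ell(\TT_\ell \setminus \TT_0)^2
	\le
	\qred \, \xi_0^2 + S\big(\norm{u_\ell - u_0}_\XX, \norm{z_\ell - z_0}_\XX\big)^2.
\end{equation*}
Adding both bounds gives $\xi_\ell^2 \le (2+\qred)\,\xi_0^2 + 3\, S\big(\norm{u_\ell - u_0}_\XX, \norm{z_\ell - z_0}_\XX\big)^2$.

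The final step is to bound the $S$-term uniformly in $\ell$. By the assumed a priori convergence~\eqref{eq:a_priori_konv}, both $(\norm{u_\ell - u_0}_\XX)_{\ell \in \N_0}$ and $(\norm{z_\ell - z_0}_\XX)_{\ell \in \N_0}$ are bounded, so the pairs $(\norm{u_\ell - u_0}_\XX, \norm{z_\ell - z_0}_\XX)$ lie in a common compact subset $K \subset \R_{\ge 0}^2$. Since $S$ is bounded on compact subsets of $\R_{\ge 0}^2$ by hypothesis, $\sup_{\ell} S(\ldots) < \infty$ follows, yielding~\eqref{eq:estimator-boundedness}. I do not anticipate any real obstacle here; the only subtle point is to rely on boundedness of $S$ on compacta (rather than on continuity at $(0,0)$, which is what drives the convergence proofs later), and to apply (B1)/(B2) with the fixed coarse mesh $\TT_0$ rather than with successive pairs $(\TT_\ell, \TT_{\ell+1})$, which would force a telescoping argument with no obvious summability.
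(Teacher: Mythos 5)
Your proof is correct and follows essentially the same route as the paper: split over $\TT_\ell \cap \TT_0$ and $\TT_\ell \setminus \TT_0$, apply stability and reduction with the fixed pair $(\TT_0, \TT_\ell)$, and then use boundedness of $S$ on compacta together with \textsl{a priori} convergence to control the $S$-term uniformly. The only cosmetic difference is that the paper combines $\xi_0(\TT_\ell\cap\TT_0)^2 + \xi_0(\TT_0\setminus\TT_\ell)^2 = \xi_0^2$ (disjointness of these sets in $\TT_0$) to sharpen your constant $2 + \qred$ to $2$, which is immaterial for the conclusion.
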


\begin{proof}
	Let $\ell \in \N_0$ be arbitrary.
	Then, stability~\eqref{assumption:stab}, together with the Young inequality, and reduction~\eqref{assumption:red}  yield that
	\begin{equation}
	\label{eq:boundedness-estimate}
		\xi_\ell^2
		=
		\xi_\ell(\TT_\ell \cap \TT_0)^2 + \xi_\ell(\TT_\ell \backslash \TT_0)^2
		\leq
		2 \, \xi_0^2 + 3 S\big( \norm{u_\ell - u_0}, \norm{z_\ell - z_0} \big)^2.
	\end{equation}
	From \textsl{a priori} convergence~\eqref{eq:a_priori_konv}, we further infer that the set $\set[\big]{\big( \norm{u_\ell - u_0}, \norm{z_\ell - z_0} \big)}{\ell \in \N_0} \subset \Rge^2$ is bounded and, thus, there is a compact set $K \subset \Rge^2$ that covers it.
	By assumption, $S$ is bounded on $K$.
	Together with~\eqref{eq:boundedness-estimate}, this yields a bound on $\xi_\ell$ that is independent of $\ell \in \N_0$.
	This concludes the proof.
\end{proof}

The following elementary result is essential for proving our main theorems.
Its proof is found, e.g., in \cite{gp2021}.

\begin{lemma}\label{lemma:folgenkonv}
    Let $(a_\ell)_{\ell \in \N_0}$ be a sequence with $a_\ell \geq 0$ for all $\ell \in \N_0$.
    Suppose that there exists $0 \leq \rho < 1$ and a sequence $(b_\ell)_{\ell\in\N_0}$ with $b_\ell \to 0$ as $\ell \to \infty$ such that
    \begin{equation*}
        a_{\ell+1}
        \leq
        \rho a_\ell + b_\ell
        \quad \text{for all } \ell \in \N_0.
    \end{equation*}
    Then, it follows that $a_\ell \to 0$ as $\ell \to \infty$.
\end{lemma}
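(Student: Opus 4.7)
The plan is to establish this as a standard perturbed geometric recursion. I would first unroll the recursion by induction on $\ell$ to obtain the explicit bound
\begin{equation*}
    a_{\ell+1} \leq \rho^{\ell+1} a_0 + \sum_{k=0}^{\ell} \rho^{\ell-k} b_k.
\end{equation*}
This reduces the question to the asymptotic behavior of two decoupled terms, and is the only place where the recursive structure is used.

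Next, I would handle the two terms separately. The prefactor $\rho^{\ell+1} a_0$ vanishes as $\ell \to \infty$ since $0 \le \rho < 1$. For the convolution sum, the idea is the usual head/tail splitting: given $\varepsilon > 0$, pick $N$ with $b_k < \varepsilon(1-\rho)$ for all $k \ge N$. The tail $\sum_{k=N}^{\ell} \rho^{\ell-k} b_k$ is then uniformly bounded by $\varepsilon$ via the geometric series, while the head $\sum_{k=0}^{N-1} \rho^{\ell-k} b_k$ is a fixed finite sum scaled by $\rho^{\ell-N+1} \to 0$. Combining everything gives $\limsup_{\ell \to \infty} a_\ell \le \varepsilon$ for every $\varepsilon > 0$, which together with $a_\ell \ge 0$ yields $a_\ell \to 0$.

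An even shorter alternative I could use is a $\limsup$ argument: the explicit bound above, together with the fact that the convergent sequence $(b_\ell)$ is bounded, already yields $\sup_{\ell} a_\ell < \infty$, so $L := \limsup_{\ell \to \infty} a_\ell$ is finite. Taking $\limsup$ on both sides of $a_{\ell+1} \le \rho a_\ell + b_\ell$ and using $b_\ell \to 0$ gives $L \le \rho L$, whence $L = 0$.

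There is no real obstacle here; the only subtle point is that, in the $\limsup$ version, one must first verify $L < \infty$ before the step $L \le \rho L$ is meaningful, which is precisely why the one-shot unrolled bound is useful as a preliminary boundedness estimate.
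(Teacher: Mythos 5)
Your proposal is correct. Note that the paper does not prove this lemma itself but only cites \cite{gp2021}, where the argument is precisely the one you give: unroll the recursion to $a_{\ell+1}\le\rho^{\ell+1}a_0+\sum_{k=0}^{\ell}\rho^{\ell-k}b_k$ and split the convolution sum into a vanishing head and a tail controlled by the geometric series. Both of your variants are sound, and your remark that the $\limsup$ shortcut requires first establishing $\limsup_{\ell\to\infty} a_\ell<\infty$ (which the unrolled bound supplies, since a convergent $(b_\ell)$ is bounded) is exactly the right point of care.
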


The next three auxiliary results are implicitly found in~\cite{gp2021} or follow from the arguments given there.
For completeness of our presentation and the convenience of the reader, we state and prove the results in the form that is needed in the proofs of Theorems~\ref{thm:goafem_a} and~\ref{thm:goafem_b} below.
\begin{lemma}\label{lemma:gp1}
    Let $(\TT_\ell)_{\ell\in\N_0}$ be a sequence of triangulations such that $\TT_{\ell+1} \in \T(\TT_\ell)$ for all $\ell\in\N_0$.
    Recall from~\eqref{eq:def_t_infty} the set $\TT_\infty$ of all elements which are not refined after finitely many steps.
    Suppose that the refinement method satisfies~\eqref{assumption:refine1}--\eqref{assumption:refine2}.
    Then, there exists a subsequence $(\TT_{\ell_k})_{k\in\N_0}$ of $(\TT_\ell)_{\ell\in\N_0}$ such that
    \begin{equation}\label{eq:gp1_schnitt}
        \TT_{\ell_k} \cap \TT_{\ell_{k+n}} = \TT_{\ell_k} \cap \TT_\infty \qbox{for all }k\in\N_0, n\in\N.
    \end{equation}
    Suppose further that the error estimator $\xi_\ell$ satisfies reduction on refined elements~\eqref{assumption:red}.
    Then, \textsl{a priori} convergence~\eqref{eq:a_priori_konv} yields that
    \begin{align}\label{eq:gp1_konvergenz}
        \xi_{\ell_k} ( \TT_{\ell_k} \backslash \TT_{\ell_{k+1}} )
        \to 0
        \quad \text{as } k \to \infty.
    \end{align}
    In particular, it follows that
    $\xi_{\ell_k} ( \MM_{\ell_k} ) \to 0$
    as $k \to \infty$.
\end{lemma}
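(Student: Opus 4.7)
My plan has two parts: first construct the subsequence using finiteness of each mesh and the structure of $\TT_\infty$, then exploit a crucial consequence of the resulting intersection property to turn reduction into a one-step contraction.

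For the construction, I would set $\ell_0 := 0$ and define $\ell_{k+1}$ inductively. Given $\ell_k$, the set $\TT_{\ell_k} \setminus \TT_\infty$ is finite because $\TT_{\ell_k}$ is finite, and by definition of $\TT_\infty$ in~\eqref{eq:def_t_infty} every $T$ in this set is refined at some later level. A brief check using~\eqref{assumption:refine2} shows that once an element is refined, it cannot reappear; hence for each such $T$ there is a minimal level $\ell(T) > \ell_k$ with $T \notin \TT_m$ for all $m \geq \ell(T)$. Setting $\ell_{k+1} := \max_{T \in \TT_{\ell_k} \setminus \TT_\infty} \ell(T)$ (or $\ell_k + 1$ if that set is empty) yields $\TT_{\ell_k} \setminus \TT_{\ell_{k+1}} \supseteq \TT_{\ell_k} \setminus \TT_\infty$. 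Conversely, elements of $\TT_{\ell_k} \cap \TT_\infty$ persist in every later triangulation, so $\TT_{\ell_k} \cap \TT_\infty \subseteq \TT_{\ell_k} \cap \TT_{\ell_{k+1}}$. Combining the two inclusions establishes~\eqref{eq:gp1_schnitt} for $n = 1$. For general $n \geq 1$, the same two arguments (refined elements never return, $\TT_\infty$-elements persist) give $\TT_{\ell_k} \cap \TT_{\ell_{k+n}} \subseteq \TT_{\ell_k} \cap \TT_{\ell_{k+1}} = \TT_{\ell_k} \cap \TT_\infty \subseteq \TT_{\ell_k} \cap \TT_{\ell_{k+n}}$.

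The heart of the convergence argument is the observation that~\eqref{eq:gp1_schnitt} forces the inclusion
\begin{equation*}
    \TT_{\ell_{k+1}} \setminus \TT_\infty \subseteq \TT_{\ell_{k+1}} \setminus \TT_{\ell_k},
\end{equation*}
since any $T \in \TT_{\ell_{k+1}} \cap \TT_{\ell_k}$ lies in $\TT_{\ell_k} \cap \TT_\infty \subseteq \TT_\infty$ by the intersection property. Setting $a_k := \xi_{\ell_k}(\TT_{\ell_k} \setminus \TT_{\ell_{k+1}})^2$ and using additionally $\TT_{\ell_k} \setminus \TT_{\ell_{k+1}} = \TT_{\ell_k} \setminus \TT_\infty$, I would then estimate
\begin{equation*}
    a_{k+1}
    = \xi_{\ell_{k+1}}(\TT_{\ell_{k+1}} \setminus \TT_\infty)^2
    \leq \xi_{\ell_{k+1}}(\TT_{\ell_{k+1}} \setminus \TT_{\ell_k})^2
    \leq \qred\, a_k + s_k^2,
\end{equation*}
where $s_k := S(\norm{u_{\ell_{k+1}} - u_{\ell_k}}_\XX, \norm{z_{\ell_{k+1}} - z_{\ell_k}}_\XX)$ and the final inequality is exactly reduction~\eqref{assumption:red} applied to the pair $(\TT_{\ell_k}, \TT_{\ell_{k+1}})$. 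By \textsl{a priori} convergence~\eqref{eq:a_priori_konv}, both arguments of $S$ tend to zero, and continuity of $S$ at $(0,0)$ gives $s_k \to 0$. Lemma~\ref{lemma:folgenkonv} applied to the contraction $a_{k+1} \leq \qred\, a_k + s_k^2$ then yields $a_k \to 0$, which is~\eqref{eq:gp1_konvergenz}. The final assertion follows since~\eqref{assumption:refine1} gives $\MM_{\ell_k} \subseteq \TT_{\ell_k} \setminus \TT_{\ell_k + 1} \subseteq \TT_{\ell_k} \setminus \TT_{\ell_{k+1}}$, so $\xi_{\ell_k}(\MM_{\ell_k})^2 \leq a_k \to 0$.

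The main obstacle is spotting the key inclusion $\TT_{\ell_{k+1}} \setminus \TT_\infty \subseteq \TT_{\ell_{k+1}} \setminus \TT_{\ell_k}$, which resolves the otherwise index-mismatched nature of~\eqref{assumption:red} (fine estimator on newly created elements bounded by coarse estimator on refined elements) and produces a genuine one-step contraction for $a_k$. Without this observation one would be tempted to additionally invoke stability~\eqref{assumption:stab} or to pass to sub-subsequences with summable $s_k$; the insight above avoids both. The remaining ingredients, finiteness of $\TT_{\ell_k}\setminus\TT_\infty$ and the elementary Lemma~\ref{lemma:folgenkonv}, are routine.
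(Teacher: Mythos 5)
Your proof is correct and follows essentially the same route as the paper: same inductive construction of the subsequence via finiteness of $\TT_{\ell_k}\setminus\TT_\infty$, same key inclusion $\TT_{\ell_{k+1}}\setminus\TT_{\ell_{k+2}}\subseteq\TT_{\ell_{k+1}}\setminus\TT_{\ell_k}$ to turn reduction~\eqref{assumption:red} into a one-step contraction, and same appeal to Lemma~\ref{lemma:folgenkonv} and~\eqref{assumption:refine1}. The only cosmetic difference is that you identify $\TT_{\ell_k}\setminus\TT_{\ell_{k+1}}=\TT_{\ell_k}\setminus\TT_\infty$ explicitly (which slightly streamlines the presentation) and you tacitly use persistence of $\TT_\infty$-elements from level $\ell_k$ onwards, whereas the paper introduces an extra auxiliary index $\hat\ell$ to handle the same point; both are fine.
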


\begin{proof}
    First, we inductively define a strictly increasing sequence of indices $(\ell_k)_{k\in\N_0}$ such that
    \begin{equation}\label{eq:proofGp1_schnitt}
        \TT_{\ell_k}\cap\TT_{\ell_{k+1}}=\TT_{\ell_k}\cap \TT_\infty \qbox{for all } k\in\N_0.
    \end{equation}
    To this end, let $\ell_0:=0$ and suppose that the indices $\ell_0,\ldots,\ell_k$ are already defined in a way that satisfies~\eqref{eq:proofGp1_schnitt}.
    On the one hand, for each $T\in\TT_{\ell_k}\cap\TT_\infty$, the definition~\eqref{eq:def_t_infty} of $\TT_\infty$ yields an index $\ell_T$ with
    \begin{equation*}
        T\in\TT_\ell \quad \mbox{for all }\ell\geq\ell_T.
    \end{equation*}
    Since $\TT_{\ell_k}\cap\TT_\infty$ is finite, the index
    \begin{equation*}
        \hat\ell := \max\limits_{T\in\TT_{\ell_k}\cap\TT_\infty}\ell_T \in \N
    \end{equation*}
    is well-defined and, for all $\ell\geq\hat\ell$ and $T\in\TT_{\ell_k}\cap\TT_\infty$, it holds that $T\in\TT_{\ell_k}\cap\TT_\ell$, i.e.,
    \begin{equation*}
        \TT_{\ell_k}\cap\TT_{\ell}\supseteq\TT_{\ell_k}\cap \TT_\infty \qbox{for all } \ell\ge\hat\ell.
    \end{equation*}
    On the other hand, for fixed $T\in\TT_{\ell_k}\backslash\TT_\infty$, there exists an index $\ell'_T$ such that
    \begin{equation*}
        T\notin\TT_{\ell'} \qbox{for all } \ell'\geq\ell'_T.
    \end{equation*}
    Again, the index
    \begin{equation*}
        \hat\ell' := \max\limits_{T\in\TT_{\ell_k}\backslash\TT_\infty} \ell'_T \in \N
    \end{equation*}
    is well-defined and we conclude $T\in\TT_{\ell_k}\backslash\TT_{\ell}$ for any $\ell\geq\hat\ell'$. It follows that
    \begin{equation*}
        \TT_{\ell_k}\backslash\TT_{\ell} \supseteq \TT_{\ell_k}\backslash\TT_\infty \qqbox{and thus}
        \TT_{\ell_k}\cap\TT_{\ell}\subseteq\TT_{\ell_k}\cap \TT_\infty\qbox{for all }\ell\ge\hat\ell'.
    \end{equation*}
    Defining $\ell_{k+1}:=\max\{\hat\ell,\hat\ell'\}$, we obtain~\eqref{eq:proofGp1_schnitt}.
    To arrive at~\eqref{eq:gp1_schnitt}, note that from the definition~\eqref{eq:def_t_infty} of $\TT_\infty$ and~\eqref{assumption:refine2} it follows that
    \begin{equation*}
        \TT_{\ell_k}\cap\TT_\infty\subseteq\TT_{\ell_k}\cap\TT_{\ell_{k+n}} \qbox{for any } n\in\N.
    \end{equation*}
    On the other hand, any given $T\in\TT_{\ell_k}\cap\TT_{\ell_{k+n}}$ must be an element of $\TT_{\ell_{k+1}}$ and we conclude 
    \begin{equation*}
        T\in\TT_{\ell_{k}}\cap\TT_{\ell_{k+1}}=\TT_{\ell_k}\cap\TT_\infty,
    \end{equation*}
    which yields $\TT_{\ell_k}\cap\TT_{\ell_{k+n}} \supseteq \TTlk\cap\TT_\infty$ and, hence,~\eqref{eq:gp1_schnitt}.
    
    To prove convergence~\eqref{eq:gp1_konvergenz}, we first note that equation~\eqref{eq:proofGp1_schnitt} guarantees the inclusion
    \begin{equation*}
        \TT_{\ell_{k+1}} \cap \TT_{\ell_k} 
        =
        \TT_{\ell_{k+1}} \cap [ \TT_{\ell_{k+1}} \cap \TT_{\ell_k} ]
        \stackrel{\eqref{eq:proofGp1_schnitt}}{=}
        \TT_{\ell_{k+1}} \cap [ \TT_{\ell_k} \cap \TT_\infty ]
        \subseteq 
        \TT_{\ell_{k+1}} \cap \TT_\infty 
        \stackrel{\eqref{eq:proofGp1_schnitt}}{=} 
        \TT_{\ell_{k+2}} \cap \TT_{\ell_{k+1}}.
    \end{equation*}
    This implies that
    \begin{equation*}
        \TT_{\ell_{k+1}}\backslash \TT_{\ell_{k+2}} = 
        \TT_{\ell_{k+1}}\backslash [ \TT_{\ell_{k+2}} \cap \TT_{\ell_{k+1}} ] \subseteq 
        \TT_{\ell_{k+1}}\backslash [ \TT_{\ell_{k+1}} \cap \TT_{\ell_{k}} ] = \TT_{\ell_{k+1}}\backslash \TT_{\ell_{k}}
    \end{equation*}
    and we arrive at the estimate
    \begin{align*}
        \xi_{\ell_{k+1}} ( \TT_{\ell_{k+1}} \backslash \TT_{\ell_{k+2}} )^2 
        &\leq
        \xi_{\ell_{k+1}} ( \TT_{\ell_{k+1}} \backslash \TT_{\ell_{k}} )^2\\ 
        &\eqreff*{assumption:red}{\leq}
        \qred \, \xi_{\ell_{k}} ( \TT_{\ell_{k}} \backslash \TT_{\ell_{k+1}} )^2 
        + S \big( \norm{u_{\ell_{k+1}} - u_{\ell_k}}_\XX, \norm{z_{\ell_{k+1}} - z_{\ell_k}}_\XX \big)^2.
    \end{align*}
    \textsl{A priori} convergence~\eqref{eq:a_priori_konv} then allows to apply Lemma~\ref{lemma:folgenkonv} to
    \begin{equation*}
        a_k := \xi_{\ell_k}(\TT_{\ell_k}\backslash\TT_{\ell_{k+1}})^2
        ,\quad
        b_k:=S\big(\norm{u_{\ell_{k+1}}-u_{\ell_k}}_\XX, \norm{z_{\ell_{k+1}}-z_{\ell_k}}_\XX, \big)^2
        \qqbox{and}
        \rho:=\qred,
    \end{equation*}
    which yields the convergence~\eqref{eq:gp1_konvergenz}.
    
    Since $T\in\TT_{\ell_{k}}\backslash\TT_{\ell_{k}+1}$ implies that $T$ is refined in step $\ell_{k}$ and therefore $T\notin\TT_\infty$, it holds that
    \begin{equation*}
        \MM_{\ell_k} \subseteq \TT_{\ell_{k}} \backslash \TT_{\ell_{k}+1} \subseteq 
        \TT_{\ell_{k}} \backslash \TT_\infty = 
        \TT_{\ell_{k}} \backslash [ \TT_{\ell_k} \cap \TT_\infty ] \stackrel{\eqref{eq:proofGp1_schnitt}}{=} 
        \TT_{\ell_{k}} \backslash [ \TT_{\ell_{k+1}} \cap \TT_{\ell_{k}} ] = 
        \TT_{\ell_{k}} \backslash \TT_{\ell_{k+1}}.
    \end{equation*}
    Hence, we infer that
    \begin{equation*}
        0 \leq
        \xi_{\ell_k} ( \MM_{\ell_k} )
        \leq 
        \xi_{\ell_k} ( \TT_{\ell_{k}} \backslash \TT_{\ell_{k+1}} )
        \to 0
        \quad \text{as } k \to \infty,
    \end{equation*}
    which concludes the proof.
\end{proof}

\begin{lemma}\label{lemma:gp2}
    Let 
    $( \TT_\ell )_{ \ell \in \N_0 }$
    be a sequence of triangulations such that
    $\TT_{\ell + 1} \in \T( \TT_\ell )$ 
    for all $\ell \in \N_0$.
    Suppose that the refinement strategy satisfies~\eqref{assumption:refine1}--\eqref{assumption:refine2} and the error estimators $\eta_\ell$ and $\zeta_\ell$ satisfy stability~\eqref{assumption:stab} and reduction~\eqref{assumption:red}.
    Suppose further that 
    $( \TT_{\ell} )_{ \ell \in \N_0 }$
    allows for a subsequence $( \TT_{\ell_k} )_{ k \in \N_0 }$
    that satisfies~\eqref{eq:gp1_schnitt} as well as
    \begin{equation}\label{eq:gp2_conv}
	    \lim_{ \substack{ k \to \infty \\ \ell_k \ge \ell' } } 
    	    \big[
    	        \eta_{\ell_k} ( \TT_{\ell'} \cap \TT_\infty ) \,
    	        \zeta_{\ell_k} ( \TT_{\ell'} \cap \TT_\infty )
    	    \big]
	    = 0 
	    \quad \text{for all } \ell' \in \N_0.
    \end{equation}
    Then, \textsl{a priori} convergence~\eqref{eq:a_priori_konv} of 
	$(u_\ell)_{\ell\in\N_0}$ and $(z_\ell)_{\ell\in\N_0}$
	implies that
    \begin{align}\label{eq:gp2}
        \eta_\ell \, \zeta_\ell \to 0
        \quad \text{as } \ell \to \infty.
    \end{align}
\end{lemma}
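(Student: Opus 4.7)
The plan is to prove the claim in two stages: first establish convergence $\etalk\zetalk\to 0$ along the distinguished subsequence, and then transfer this to the full sequence $\eta_\ell\zeta_\ell$ via stability~\eqref{assumption:stab} and reduction~\eqref{assumption:red} together with \textsl{a priori} convergence~\eqref{eq:a_priori_konv}.

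For the subsequence stage, the key set-theoretic observation is that, for any fixed index $k_0$ and all $k>k_0$, property~\eqref{eq:gp1_schnitt} yields $\TTlk\cap\TT_{\ell_{k_0}}=\TT_{\ell_{k_0}}\cap\TT_\infty$, while $\TT_{\ell_{k_0}}\setminus\TTlk\subseteq\TT_{\ell_{k_0}}\setminus\TT_{\ell_{k_0+1}}$ (since any $T\in\TT_{\ell_{k_0}}\cap\TT_{\ell_{k_0+1}}$ lies in $\TT_\infty$ by~\eqref{eq:gp1_schnitt} and must therefore survive to $\TTlk$). Using this, I decompose $\xi_{\ell_k}^2=\xi_{\ell_k}(\TT_{\ell_{k_0}}\cap\TT_\infty)^2+\xi_{\ell_k}(\TTlk\setminus\TT_{\ell_{k_0}})^2$ for $\xi\in\{\eta,\zeta\}$, control the second piece via reduction~\eqref{assumption:red} applied to the pair $\TT_{\ell_{k_0}}\to\TTlk$, and employ the subset inclusion above to replace the appearing $\xi_{\ell_{k_0}}(\TT_{\ell_{k_0}}\setminus\TTlk)$ by the smaller quantity $\xi_{\ell_{k_0}}(\TT_{\ell_{k_0}}\setminus\TT_{\ell_{k_0+1}})$, which tends to zero as $k_0\to\infty$ by Lemma~\ref{lemma:gp1}. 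Multiplying the decompositions for $\eta_{\ell_k}^2$ and $\zeta_{\ell_k}^2$ and exploiting the uniform estimator bound $\Cest$ from Lemma~\ref{lemma:estimator-boundedness}, four summands appear: the leading one is dominated by $\Cest^2\cdot\etalk(\TT_{\ell_{k_0}}\cap\TT_\infty)\zetalk(\TT_{\ell_{k_0}}\cap\TT_\infty)$, which vanishes as $k\to\infty$ by hypothesis~\eqref{eq:gp2_conv}; the other three are each bounded by constants times $\xi_{\ell_{k_0}}(\TT_{\ell_{k_0}}\setminus\TT_{\ell_{k_0+1}})^2$ and $S(\norm{u_{\ell_k}-u_{\ell_{k_0}}}_\XX,\norm{z_{\ell_k}-z_{\ell_{k_0}}}_\XX)^2$, both of which become small for $k,k_0$ large by Lemma~\ref{lemma:gp1}, \textsl{a priori} convergence~\eqref{eq:a_priori_konv}, and continuity of $S$ at $(0,0)$. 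Given $\delta>0$, I thus first fix $k_0$ large enough so that these three remainders are uniformly below $\delta/2$ for all $k\geq k_0$, and only then let $k\to\infty$.

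For the transition to the full sequence, given arbitrary $\ell$, let $k=k(\ell)$ denote the largest index with $\ell_k\leq\ell$; then $k(\ell)\to\infty$ as $\ell\to\infty$. Splitting $\TT_\ell=(\TT_\ell\cap\TTlk)\cup(\TT_\ell\setminus\TTlk)$ and applying stability~\eqref{assumption:stab} and reduction~\eqref{assumption:red} (combined with Young's inequality) yields $\xi_\ell^2\leq(2+\qred)\,\xi_{\ell_k}^2+3\,S(\norm{u_\ell-u_{\ell_k}}_\XX,\norm{z_\ell-z_{\ell_k}}_\XX)^2$ for $\xi\in\{\eta,\zeta\}$. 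Multiplying the bounds for $\eta_\ell^2$ and $\zeta_\ell^2$ and using uniform estimator boundedness produces an estimate of the shape $\eta_\ell\zeta_\ell\lesssim\etalk\zetalk+\Cest\,S(\cdots)+S(\cdots)^2$. Since $\norm{u_\ell-u_{\ell_k}}_\XX\leq\norm{u_\ell-u_\infty}_\XX+\norm{u_\infty-u_{\ell_k}}_\XX\to 0$ as $\ell\to\infty$ (and analogously for $z$), continuity of $S$ at $(0,0)$ forces the last two terms to vanish, while the first does so by the subsequence result.

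The main obstacle will be the correct ordering of limits in the subsequence stage: hypothesis~\eqref{eq:gp2_conv} provides convergence of $\etalk(\TT_{\ell'}\cap\TT_\infty)\zetalk(\TT_{\ell'}\cap\TT_\infty)$ only for \emph{fixed} $\ell'$, so $k_0$ must be chosen first (large enough so that both the reduction remainder from Lemma~\ref{lemma:gp1} and the $S$-difference are uniformly controlled in $k\geq k_0$) before $k$ is sent to infinity. The accompanying set inclusion $\TT_{\ell_{k_0}}\setminus\TTlk\subseteq\TT_{\ell_{k_0}}\setminus\TT_{\ell_{k_0+1}}$, though elementary once the refinement axioms and~\eqref{eq:gp1_schnitt} are combined, is precisely what allows reduction~\eqref{assumption:red} to yield a remainder that stays small uniformly in $k$ once $k_0$ has been fixed.
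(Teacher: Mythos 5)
Your proposal is correct and follows essentially the same two-step strategy as the paper's proof: first establish $\eta_{\ell_k}\zeta_{\ell_k}\to 0$ along the distinguished subsequence by decomposing over $\TT_{\ell_{k_0}}\cap\TT_\infty$ and its complement, applying reduction~\eqref{assumption:red} together with Lemma~\ref{lemma:gp1} and boundedness of the estimators, and invoking~\eqref{eq:gp2_conv} only after the index $k_0$ has been fixed; then transfer to the full sequence via stability~\eqref{assumption:stab}, reduction~\eqref{assumption:red}, and \textsl{a priori} convergence~\eqref{eq:a_priori_konv}. The only cosmetic deviation is that in the transfer step you let the anchor index $k(\ell)$ float with $\ell$ whereas the paper fixes $\ell_k$ per given $\varepsilon$, but this does not change the substance of the argument.
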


\begin{proof}
	Let 
	$( \TT_{\ell_k} )_{ k \in \N_0 }$ 
	be a subsequence that satisfies~\eqref{eq:gp1_schnitt} as well as~\eqref{eq:gp2_conv}.
	%, and let $\TT_\coarse \in \T$ and 	$\TT_\fine \in \T \big( \TT_\coarse \big)$.
	We split the proof into two steps.
	
    \textbf{Step 1}: 
    We prove that
    % there exists a further subsequence 
    %$\big( \TT_{\ell_{k_j}} \big)_{j \in \N_0}$ with 
    $\eta_{\ell_k} \zeta_{\ell_k} \to 0$ 
    as $k \to \infty$.\newline
    To this end, note that we can assume, without loss of generality, the existence of 
    $K \in \N$ such that 
    $\eta_{\ell_k} \neq 0$ and $\zeta_{\ell_k} \neq 0$ 
    for all $k > K$, 
    since otherwise we can choose a subsequence that satisfies 
    $\eta_{\ell_{k_j}} \zeta_{\ell_{k_j}} = 0$ for all $j \in \N_0$.
    Furthermore, boundedness of the error estimators~\eqref{eq:estimator-boundedness} guarantees that
    \begin{equation}\label{eq:gp2_bound}
        \sup_{\ell \in \N_0} 
            ( \eta_\ell + \zeta_\ell )
        \le 
        \Cest 
        < 
        \infty.
    \end{equation}
    Let $\xi \in \{\eta, \zeta\}$. For all $\TT_\coarse \in \T$ and 	$\TT_\fine \in \T ( \TT_\coarse )$, reduction on refined elements~\eqref{assumption:red} yields that
    \begin{equation}
    \label{eq:proofGp2_eq1}
    \begin{split}
    	\xi_\fine^2
    	&=
    	\xi_\fine ( \TT_\fine \backslash \TT_\coarse )^2
    	+ \xi_\fine ( \TT_\fine \cap \TT_\coarse )^2\\
    	&\eqreff*{assumption:red}{\leq}
    	\qred \, \xi_\coarse ( \TT_\coarse \backslash \TT_\fine )^2
    	+ S \big( \norm{u_\fine - u_\coarse}_\XX, \norm{z_\fine - z_\coarse}_\XX \big)^2  
    	+ \xi_\fine ( \TT_\fine \cap \TT_\coarse )^2.
    \end{split}
    \end{equation}
    Applying this result to
    $\TT_\coarse := \TT_{\ell_{k}}$ and $\TT_\fine := \TT_{\ell_{k+n}}$
    with $k > K$ and $n \in \N$
    and defining
    \begin{equation*}
        S_{\fine,\coarse}
        :=
        S \big( \norm{u_\fine - u_\coarse}_\XX, \norm{z_\fine - z_\coarse}_\XX \big),
    \end{equation*}
    we obtain that
    \begin{align}
    \begin{split}
    \label{eq:gp2_red-ineq}
        \xi_{\ell_{k+n}}^2
        &\eqreff*{eq:proofGp2_eq1}{\leq}
        \qred \, \xi_{\ell_k} ( \TT_{\ell_k} \backslash \TT_{\ell_{k+n}} )^2 
        + \xi_{\ell_{k+n}} ( \TT_{\ell_{k+n}} \cap \TT_{\ell_k} )^2 
        + S_{\ell_{k+n},\ell_k}^2 
        \\
        &\eqreff*{eq:gp1_schnitt}{=}~
        \qred \, \xi_{\ell_k} ( \TT_{\ell_k} \backslash \TT_{\ell_{k+n}} )^2
        + \xi_{\ell_{k+n}} ( \TT_{\ell_{k}} \cap \TT_\infty )^2 
        + S_{\ell_{k+n},\ell_k}^2.
    \end{split}
    \end{align}
    The multiplication of the resulting inequality for $\zeta_{\ell_{k+n}}^2$ by $\eta_{\ell_{k+n}}^2$ yields that
    \begin{align*}
        \eta_{\ell_{k+n}}^2 \zeta_{\ell_{k+n}}^2
        & \eqreff*{eq:gp2_red-ineq}{\le}
        \eta_{\ell_{k+n}}^2 \, 
        \zeta_{\ell_{k+n}} ( \TT_{\ell_{k}} \cap \TT_\infty )^2
        + \eta_{\ell_{k+n}}^2
        \big[
            \qred \,
            \zeta_{\ell_k} ( \TT_{\ell_k} \backslash \TT_{\ell_{k+n}} )^2
            + S_{\ell_{k+n},\ell_k}^2
        \big]
        \\
        & \eqreff*{eq:gp2_bound}{\le}
        \eta_{\ell_{k+n}}^2 \, 
        \zeta_{\ell_{k+n}} ( \TT_{\ell_{k}} \cap \TT_\infty )^2
        + \Cest^2
        \big[
            \qred \,
            \zeta_{\ell_k} ( \TT_{\ell_k} \backslash \TT_{\ell_{k+n}} )^2
            + S_{\ell_{k+n},\ell_k}^2
        \big].
\end{align*}
The remaining primal estimator
$\eta_{\ell_{k+n}}$
can again be estimated by~\eqref{eq:gp2_red-ineq} to obtain
\begin{align*}
        \eta_{\ell_{k+n}}^2 \zeta_{\ell_{k+n}}^2
        & \eqreff*{eq:gp2_red-ineq}{\le} 
        \qred \, 
        \eta_{\ell_k} ( \TT_{\ell_k} \backslash \TT_{\ell_{k+n}} )^2 \,
        \zeta_{\ell_{k+n}} ( \TT_{\ell_k} \cap \TT_\infty )^2
        + \eta_{\ell_{k+n}} ( \TT_{\ell_k} \cap \TT_\infty )^2 \,
        \zeta_{\ell_{k+n}} ( \TT_{\ell_k} \cap \TT_\infty )^2
        \\
        & \qquad + S_{\ell_{k+n},\ell_k}^2 \, 
        \zeta_{\ell_k} ( \TT_{\ell_k} \cap \TT_\infty )^2
        + \Cest^2 \big[
            \qred \, 
            \zeta_{\ell_k} ( \TT_{\ell_k} \backslash \TT_{\ell_{k+n}} )^2
            + S_{\ell_{k+n},\ell_k}^2
        \big]
        \\
        &\eqreff*{eq:gp2_bound}{\le} 
        \eta_{\ell_{k+n}} ( \TT_{\ell_k} \cap \TT_\infty )^2 \,
        \zeta_{\ell_{k+n}} ( \TT_{\ell_k} \cap \TT_\infty )^2
        \\& \qquad 
        + \Cest^2 \big[
            \qred \, 
            \eta_{\ell_k} ( \TT_{\ell_k} \backslash \TT_{\ell_{k+n}} )^2
+ \qred \, 
            \zeta_{\ell_k} ( \TT_{\ell_k} \backslash \TT_{\ell_{k+n}} )^2
            + 2 S_{\ell_{k+n},\ell_k}^2
        \big].
    \end{align*}
    Let $\varepsilon>0$ be arbitrary.
    Lemma~\ref{lemma:gp1}, together with continuity of $S$ at $(0,0)$ 
    and \textsl{a~priori} convergence~\eqref{eq:a_priori_konv} of 
    $(u_\ell)_{\ell\in\N_0}$ as well as $(z_\ell)_{\ell\in\N_0}$
    yields an index $k_0 \in \N_0$ such that for all $n \in \N$ there holds
    \begin{align*}
        \qred \, \eta_{\ell_{k_0}} ( \TT_{\ell_{k_0}} \backslash \TT_{\ell_{k_0+n}} )^2
        + \qred \, \zeta_{\ell_{k_0}} ( \TT_{\ell_{k_0}} \backslash \TT_{\ell_{k_0+n}} )^2
        + 2 S_{\ell_{k_0+n},\ell_{k_0}}^2
        \le \varepsilon \, \Cest^{-2}.
    \end{align*}
    Similarly, the assumption~\eqref{eq:gp2_conv} guarantees the existence of an index $n_0 \in \N$ with
    \begin{align*}
        \eta_{\ell_{k_0+n}} ( \TT_{\ell_{k_0}} \cap \TT_\infty )^2 \,
        \zeta_{\ell_{k_0+n}} ( \TT_{\ell_{k_0}} \cap \TT_\infty )^2
        \le \varepsilon 
        \quad \text{ for all } n \ge n_0.
    \end{align*}
    For $k \ge k_0 + n_0$, it follows that    
    \begin{align*}
        \eta_{\ell_k}^2 \zeta_{\ell_k}^2
        =
        \eta_{\ell_{k_0 + (k-k_0)}}^2 \zeta_{\ell_{k_0 + (k-k_0)}}^2
        \le 2\varepsilon
    \end{align*}
    and we conclude $\eta_{\ell_k}\zeta_{\ell_k} \to 0$ as $k \to \infty$.
    
    \textbf{Step 2}: We prove that, for every $\varepsilon > 0$, there exists 
    $\ell_0 \in \N$ 
    such that 
    $\eta_\ell \, \zeta_\ell < \varepsilon$
    for all $\ell \ge \ell_0$. \newline
    Using~\eqref{eq:proofGp2_eq1} as well as $\qred<1$ and stability~\eqref{assumption:stab} of the error estimator
    $\xi \in \{\eta, \zeta\}$,
    we infer
    \begin{align}\begin{split}\label{eq:proofGp2_eq3}
        \xi_\fine^2 \,
        &\eqreff*{eq:proofGp2_eq1}{\leq}
        \qred \, 
        \xi_\coarse ( \TT_\coarse \backslash \TT_\fine )^2
        + \xi_\fine ( \TT_\fine \cap \TT_\coarse )^2
        + S_{\fine,\coarse}^2 
        \\
        & \eqreff*{assumption:stab}{\leq}
        \qred \, \xi_\coarse^2
        + \big[ 
            \xi_\coarse ( \TT_\fine \cap \TT_\coarse ) 
        	+ S_{\fine,\coarse}
        \big]^2
        + S_{\fine,\coarse}^2 
        \leq 2 ( \xi_\coarse + S_{\fine,\coarse} )^2.
    \end{split}\end{align}
    For given $\varepsilon>0$, there exists $k\in\N$ such that
    \begin{equation}
    \label{eq:gp2_step2-eps-bound}
        \eta_{\ell_{k}} \zeta_{\ell_{k}} 
        + S_{\ell,\ell_{k}}
        < \varepsilon
        \quad \text{for all } \ell \ge \ell_{k}.
    \end{equation}
    Thus, the application of~\eqref{eq:proofGp2_eq3} to $\TT_\coarse := \TT_{\ell_{k_j}}$ and $\TT_\fine := \TT_\ell$ yields that
    \begin{align*}
        \eta_\ell^2 \, \zeta_\ell^2
        & \le
        4 ( \eta_{\ell_{k}} + \varepsilon )^2
        ( \zeta_{\ell_{k}} + \varepsilon )^2
        \\
        &= 4 \big[
            \eta_{\ell_{k}} 
            ( \eta_{\ell_{k}} + 2\varepsilon )
            + \varepsilon^2 
        \big]
        \big[
            \zeta_{\ell_{k}} 
            ( \zeta_{\ell_{k}} + 2\varepsilon )
            + \varepsilon^2 
        \big]
        \\
        &= 4 \eta_{\ell_{k}} \zeta_{\ell_{k_j}}
        ( \eta_{\ell_{k}} + 2\varepsilon )
        ( \zeta_{\ell_{k}} + 2\varepsilon )
        + 4 \varepsilon^2
        \eta_{\ell_{k}}
        ( \eta_{\ell_{k}} + 2\varepsilon )
        + 4 \varepsilon^2 
        \zeta_{\ell_{k}} ( \zeta_{\ell_{k}} + 2\varepsilon )
        + 4 \varepsilon^4
        \\
        & \stackrel{\mathclap{
            \eqref{eq:gp2_bound},
            \eqref{eq:gp2_step2-eps-bound}
        }}
        {\le} \quad
        4 \varepsilon ( \Cest + 2\varepsilon )^2
        + 8 \varepsilon^2 \Cest ( \Cest + 2\varepsilon )
        + 4 \varepsilon^4.
    \end{align*}
    Since $\varepsilon > 0$ was arbitrary, this concludes the proof.
\end{proof}

%%%%%%%%%%%%%%%%%%%%%%%%%%%%%%%%%%%%%%%%%%%%%%%%%%%%%%%%%%%%%%%%%%%%%%%%%%%%%%%%%%%%%%%%%%%%
\subsection{Proof of Theorem~\ref{thm:goafem_a}}
%%%%%%%%%%%%%%%%%%%%%%%%%%%%%%%%%%%%%%%%%%%%%%%%%%%%%%%%%%%%%%%%%%%%%%%%%%%%%%%%%%%%%%%%%%%%

As in the proof of Theorem~\ref{thm:msv-result}, we may assume, without loss of generality, that Marking Strategy~\ref{alg:mark_a} is used; see Remark~\ref{rem:marking-strategies}.
The proof of Theorem~\ref{thm:goafem_a} is a straightforward application of the auxiliary results from Lemma~\ref{lemma:gp1}--\ref{lemma:gp2}:
    Lemma~\ref{lemma:gp1} yields a subsequence
    $( \TT_{\ell_k} )_{k \in \N_0}$
    that satisfies~\eqref{eq:gp1_schnitt} and~\eqref{eq:gp1_konvergenz}.
    Let $\ell' \in \N_0$ be arbitrary and $\ell_k \ge \ell'$.
    Then the marking criterion~\eqref{eq:mark_a} yields that
    \begin{align*}
        \eta_{\ell_k} ( \TT_{\ell'} \cap \TT_\infty )^2
        \zeta_{\ell_k} ( \TT_{\ell'} \cap \TT_\infty )^2
        & = \sum_{ T,T' \in \TT_{\ell'} \cap \TT_\infty }
        \eta_{\ell_k} (T)^2 \zeta_{\ell_k} (T')^2
        \\
        & \le \# [ \TT_{\ell'} \cap \TT_\infty ]
        \Big( \max_{ T \in \TT_{\ell_k} \backslash \MM_{\ell_k} }
        \eta_{\ell_k} (T)
        \max_{ T \in \TT_{\ell_k} \backslash \MM_{\ell_k} }
        \zeta_{\ell_k} (T) \Big)^2
        \\
        & \eqreff*{eq:mark_a}{\lesssim}
        V \big( 
            \eta_{\ell_k} ( \MM_{\ell_k} ) , 
            \zeta_{\ell_k} ( \MM_{\ell_k} )
        \big)^2
        \to 0 \quad \text{as } k \to \infty.
    \end{align*}
    Therefore, Lemma~\ref{lemma:gp2} yields that
    $\eta_\ell \, \zeta_\ell \to 0$
    as $\ell \to \infty$.
\qed

%%%%%%%%%%%%%%%%%%%%%%%%%%%%%%%%%%%%%%%%%%%%%%%%%%%%%%%%%%%%%%%%%%%%%%%%%%%%%%%%%%%%%%%%%%%%
\subsection{Proof of Theorem~\ref{thm:goafem_b}}
%%%%%%%%%%%%%%%%%%%%%%%%%%%%%%%%%%%%%%%%%%%%%%%%%%%%%%%%%%%%%%%%%%%%%%%%%%%%%%%%%%%%%%%%%%%%

The proof of Theorem~\ref{thm:goafem_b} makes heavy use of the structure provided in Marking Strategy~\ref{alg:mark_b} for the function $W$, which is more general in comparison to the special cases shown in~\cite{bet2011}.
	Note that this structure is inspired by Dörfler marking~\eqref{eq:doerfler}.
    We have to distinguish two cases:
    
    \textbf{Case 1:} There exists $\ell_0 \in \N_0$ such that $\eta_{\ell_0} \zeta_{\ell_0} = 0$.\newline
	According to Remark~\ref{rem:estimators_nonzero}, Algorithm~\ref{alg:goafem} is stopped in the $\ell_0$-th step.
	The sequence $(\TT_\ell)_{\ell \in \N_0} \subset \T$ as defined in Remark~\ref{rem:estimators_nonzero}  trivially satisfies the proposed convergence $\eta_\ell \zeta_\ell \to 0$ as $\ell \to \infty$.

	\textbf{Case 2:} For all $\ell \in \N_0$, there holds $\eta_\ell \zeta_\ell \neq 0$.\newline
	Let $\ell\in\N_0$ be arbitrary.
	The marking criterion~\eqref{eq:mark_b} of Marking Strategy~\ref{alg:mark_b} together with assumption~\eqref{eq:C-assumptions} yields that
    \begin{equation*}
    	0 < \theta
    	\eqreff*{eq:mark_b}{\leq}
    	W \bigg( \frac{\eta_\ell(\MM_\ell)^2}{\eta_\ell^2},
	   		\frac{\zeta_\ell(\MM_\ell)^2} {\zeta_\ell^2} \bigg)
	    \eqreff*{eq:C-assumptions}{\leq}
	    C_W \max \bigg\{ \frac{\eta_\ell(\MM_\ell)^2}{\eta_\ell^2},
	    	\frac{\zeta_\ell(\MM_\ell)^2} {\zeta_\ell^2} \bigg\}.
    \end{equation*}
    Without loss of generality, let the maximum be attained at $\eta_\ell$, i.e.,
    \begin{equation}\label{eq:bew_satz_c_ugl1}
        0 <
        \frac{\theta}{C_W}
        \leq
        \frac{\eta_\ell(\MM_\ell)^2}{\eta_\ell^2}
        \leq 1.
    \end{equation}
    To abbreviate notation, let $S_\ell:= S(\norm{u_{\ell+1} - u_\ell}_\XX, \norm{z_{\ell+1} - z_\ell}_\XX)$.
    Stability~\eqref{assumption:stab} and reduction~\eqref{assumption:red} of the error estimator $\eta_\ell$ together with the Young inequality for $\delta>0$ yield
    \begin{align*}
        \eta_{\ell+1}^2 &= \eta_{\ell+1}(\TT_{\ell+1}\cap\TT_\ell)^2 + \eta_{\ell+1}(\TT_{\ell+1}\backslash\TT_\ell)^2 \\
        &\stackrel{\mathclap{\eqref{assumption:stab}-\eqref{assumption:red}}}{\le} \quad
        (1+\delta) \, \eta_\ell(\TT_\ell\cap\TT_{\ell+1})^2 + \qred\eta_{\ell}(\TT_{\ell}\backslash\TT_{\ell+1})^2 + (2+\delta^{-1})S_\ell^2 \\
        &=
        (1+\delta) \, \eta_\ell^2 - (1+\delta-\qred)\eta_{\ell}(\TT_{\ell}\backslash\TT_{\ell+1})^2 + (2+\delta^{-1})S_\ell^2 \\
        &\le
        (1+\delta) \, \eta_\ell^2 - (1+\delta-\qred)\eta_{\ell}(\MM_\ell)^2 + (2+\delta^{-1})S_\ell^2 \\
        &\eqreff*{eq:bew_satz_c_ugl1}{\le}
        (1+\delta) \, \eta_\ell^2 - (1+\delta-\qred)\frac{\theta}{C_W}\eta_{\ell}^2 + (2+\delta^{-1})S_\ell^2
        =
        q \, \eta_\ell^2 + (2+\delta^{-1})S_\ell^2,
    \end{align*}
    where $q := (1+\delta) - (1+\delta-\qred)\theta/C_W$.
    Similarly, it follows for $\zeta_\ell$ and $\varepsilon>0$ that
    \begin{align*}
        \zeta_{\ell+1}^2
        &\le
        (1+\varepsilon) \, \zeta_\ell(\TT_\ell\cap\TT_{\ell+1})^2 
        + \qred \, \zeta_{\ell}(\TT_{\ell}\backslash\TT_{\ell+1})^2 + (2+\varepsilon^{-1})S_\ell^2 \\
        &\le
        (1+\varepsilon) \, \zeta_\ell^2 + (2+\varepsilon^{-1})S_\ell^2.
    \end{align*}
    Combining both estimates, we get
    \begin{align*}
        \eta_{\ell+1}^2\zeta_{\ell+1}^2
        &\le
        q(1+\varepsilon) \, \eta_{\ell}^2\zeta_{\ell}^2 + (2 + \delta^{-1}) (1+\varepsilon) S_\ell^2 \zeta_\ell^2 \\
        &\quad +  q (2+\varepsilon^{-1})S_\ell^2 \eta_\ell^2 + (2 + \delta^{-1})(2+\varepsilon^{-1})S_\ell^4.
    \end{align*}
    For all fixed $\delta,\varepsilon>0$, boundedness of the error estimators~\eqref{eq:estimator-boundedness} as well as \textsl{a priori} convergence~\eqref{eq:a_priori_konv} yield that
    \begin{equation*}
        b_\ell
        :=
        (2 + \delta^{-1})(1+\varepsilon) S_\ell^2 \zeta_\ell^2
        + q (2+\varepsilon^{-1})S_\ell^2 \eta_\ell^2
        + (2 + \delta^{-1}) (2+\varepsilon^{-1}) S_\ell^4
        \to 0
        \quad \text{as } \ell \to \infty.
    \end{equation*}
    Since $0 < \qred < 1$ and $0 < \theta / C_W \leq 1$, we may choose $\delta, \varepsilon > 0$ such that $q' := q(1+\varepsilon) \in (0,1)$.
    For $a_\ell := \eta_{\ell}^2\zeta_{\ell}^2$ for $\ell\in\N_0$, we then have that
    \begin{equation*}
    	a_{\ell+1} \leq q'a_\ell + b_\ell \qbox{for all } \ell \in \N_0.
    \end{equation*}
    By use of Lemma~\ref{lemma:folgenkonv}, we conclude that $a_\ell \to 0$ as $\ell \to \infty$.
\qed
%\clearpage
%!TEX root = main.tex

%%%%%%%%%%%%%%%%%%%%%%%%%%%%%%%%%%%%%%%%%%%%%%%%%%%%%%%%%%%%%%%%%%%%%%%%%%%%%%%%%%%
%%%%%%%%%%%%%%%%%%%%%%%%%%%%%%%%%%%%%%%%%%%%%%%%%%%%%%%%%%%%%%%%%%%%%%%%%%%%%%%%%%%
\section{Examples}\label{sec:examples}
%%%%%%%%%%%%%%%%%%%%%%%%%%%%%%%%%%%%%%%%%%%%%%%%%%%%%%%%%%%%%%%%%%%%%%%%%%%%%%%%%%%
%%%%%%%%%%%%%%%%%%%%%%%%%%%%%%%%%%%%%%%%%%%%%%%%%%%%%%%%%%%%%%%%%%%%%%%%%%%%%%%%%%%

In this section, we underline the practicality of our main theorems.
To this end, we present four examples which fit into our settings and give numerical results for the first two.
Throughout, we use newest vertex bisection (NVB) for mesh-refinement; see, e.g., ~\cite{bdd2004,stevenson2008}.
All numerical examples are implemented using the Matlab AFEM package MooAFEM~\cite{mooafem}.

%%%%%%%%%%%%%%%%%%%%%%%%%%%%%%%%%%%%%%%%%%%%%%%%%%%%%%%%%%%%%%%%%%%%%%%%%%%%%%%%%%%
\subsection{Different marking strategies}\label{subsec:ms-example}
%%%%%%%%%%%%%%%%%%%%%%%%%%%%%%%%%%%%%%%%%%%%%%%%%%%%%%%%%%%%%%%%%%%%%%%%%%%%%%%%%%%

We consider the following example from~\cite{ms2009}.
Let $\Omega := (0,1)^2 \subset \R^2$ and let $\TT_0$ be chosen such that $\vec{f}, \vec{g} \in [L^2(\Omega)]^2$, given by
\begin{equation*}
	\vec{f}(x)
	:=
	\begin{cases}
		(-1,0) & \text{if } x_1 + x_2 < 1/2,\\
		(0,0) & \text{else,}
	\end{cases}
	\quad \text{and} \quad
	\vec{g}(x)
	:=
	\begin{cases}
	(1,0) & \text{if } x_1 + x_2 > 3/2,\\
	(0,0) & \text{else,}
	\end{cases}
\end{equation*}
are $\TT_0$-piecewise constant.
We suppose that the primal solution $u \in \XX := H^1_0 (\Omega)$ solves
\begin{equation}
\label{eq:model-ms}
	a(u,v) 
	:=
    \int_\Omega \nabla u \cdot \nabla v \d{x}
    =
    \int_\Omega \vec{f} \cdot \nabla v \d{x}
    =:
    F(v)
    \quad
    \text{for all  } v \in \XX,
\end{equation}
while the dual solution $z \in \XX$ solves
\begin{equation}
\label{eq:model-ms-dual}
	a(v,z)
	=
	\int_\Omega \vec{g} \cdot \nabla v \d{x}
	=:
	G(v)
	\quad
	\text{for all } v \in \XX.
\end{equation}
We are interested in the goal value $G(u) = a(u,z)$.
With $\XX_\coarse := \SS^p_0(\TT_\coarse)$, the discrete approximations of~\eqref{eq:model-ms}--\eqref{eq:model-ms-dual} then read
\begin{equation}
\label{eq:model-ms-discrete}
	a(u_\coarse,v_\coarse)
	=
	F(v_\coarse)
	\quad \text{and} \quad
	a(v_\coarse,z_\coarse)
	=
	G(v_\coarse)
	\qquad
	\text{for all  } v_\coarse \in \XX_\coarse.
\end{equation}
For~\eqref{eq:model-ms}--\eqref{eq:model-ms-discrete}, the residual \textsl{a~posteriori} error estimators read
\begin{equation}
\label{eq:model-ms-estimators}
\begin{split}
	\eta_\coarse(T)^2
	&:=
	h_T^2 \, \norm{\Delta u_\coarse}_{L^2(T)}^2
	+ h_T \, \norm{\jump{(\nabla u_\coarse - \vec{f} ) \cdot \normalvec}}_{L^2(\partial T \cap \Omega)}^2,\\
	\zeta_\coarse(T)^2
	&:=
	h_T^2 \, \norm{\Delta z_\coarse}_{L^2(T)}^2
	+ h_T \, \norm{\jump{(\nabla z_\coarse - \vec{g}) \cdot \normalvec}}_{L^2(\partial T \cap \Omega)}^2.
\end{split}
\end{equation}

\begin{theorem}[\cite{ms2009}]
	The error estimators $\eta_\coarse$ and $\zeta_\coarse$ from~\eqref{eq:model-ms-estimators} satisfy stability~\eqref{assumption:stab} and reduction~\eqref{assumption:red}.
	Furthermore, there holds \textsl{a~priori} convergence~\eqref{eq:a_priori_konv} as well as the goal error estimate~\eqref{eq:goal-error-estimate} with $G_\coarse := G(u_\coarse)$.
	\hfill $\square$
\end{theorem}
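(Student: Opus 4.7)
The plan is to verify the three claims separately by reducing each to the standard residual-based AFEM analysis for the Poisson problem already laid out in Section~\ref{sec:motivation}. The pivotal structural observation is that $\vec{f}$ and $\vec{g}$ are $\TT_0$-piecewise constant, and every mesh $\TT_\ell \in \T$ is a refinement of $\TT_0$. Consequently, $\vec{f}$ and $\vec{g}$ remain piecewise constant on every $\TT_\ell$, have vanishing divergence on each element, and their normal jumps are supported on the fixed skeleton $\Sigma_0$ of $\TT_0$, which lies in the skeleton of every refinement. In particular, no data oscillation is generated under refinement, so the whole analysis collapses onto the pure Poisson case.

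For stability~\eqref{assumption:stab} and reduction~\eqref{assumption:red}, I would argue exactly as in Propositions~\ref{prop:poisson-stab}--\ref{prop:poisson-reduction}. On a non-refined element $T \in \TT_\coarse \cap \TT_\fine$, the local mesh-size is unchanged and, since $\vec{f}$ is identical on both meshes, the difference $\jump{(\nabla u_\fine - \vec{f})\cdot\normalvec} - \jump{(\nabla u_\coarse - \vec{f})\cdot\normalvec}$ reduces to $\jump{\nabla(u_\fine - u_\coarse)\cdot\normalvec}$; combining the inverse triangle inequality with the standard element and trace inverse estimates yields $|\eta_\fine(T) - \eta_\coarse(T)| \lesssim \norm{\nabla(u_\fine - u_\coarse)}_{L^2(\Omega_\coarse[T])}$ and analogously for $\zeta$. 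Summing over $\TT_\fine \cap \TT_\coarse$ gives~\eqref{assumption:stab}. On refined elements, NVB guarantees $|T'| \leq |T|/2$ and hence $h_{T'} \leq 2^{-1/d} h_T$; together with the same inverse inequalities, this provides a reduction factor $\qred < 1$ in front of the mesh-size-weighted residuals and yields~\eqref{assumption:red}, with $S(r,t) := \max\{\Cstab, \Cred\} \max\{r,t\}$ covering both inequalities.

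For \textsl{a~priori} convergence~\eqref{eq:a_priori_konv}, the conforming spaces $\XX_\ell = \SS^p_0(\TT_\ell)$ are nested, so $\XX_\infty := \overline{\bigcup_\ell \XX_\ell}$ is a closed subspace of $\XX = H^1_0(\Omega)$; ellipticity of $a(\cdot,\cdot)$ provides unique Galerkin projections $u_\infty, z_\infty \in \XX_\infty$ of $u, z$, and Cea-type quasi-optimality combined with density of $\bigcup_\ell \XX_\ell$ in $\XX_\infty$ yields $\norm{u_\infty - u_\ell}_\XX + \norm{z_\infty - z_\ell}_\XX \to 0$. Finally, for the goal error estimate, Galerkin orthogonality applied to~\eqref{eq:model-ms-discrete} gives
\begin{equation*}
    |G(u) - G(u_\coarse)|
    = |a(u-u_\coarse, z-z_\coarse)|
    \leq \norm{\nabla(u-u_\coarse)}_{L^2(\Omega)} \, \norm{\nabla(z-z_\coarse)}_{L^2(\Omega)},
\end{equation*}
and reliability (proved via Scott--Zhang quasi-interpolation exactly as in Proposition~\ref{prop:poisson-rel}, without oscillation terms thanks to the piecewise-constant data) provides $\norm{\nabla(u-u_\coarse)}_{L^2(\Omega)} \leq \Crel \eta_\coarse$ and the analogue for $\zeta_\coarse$, hence~\eqref{eq:goal-error-estimate} with $\Cgoal := \Crel^2$.

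The only obstacle is bookkeeping: one must carefully check that the $\vec{f}, \vec{g}$ contributions enter the residual solely through edge jumps supported on the refinement-independent skeleton $\Sigma_0$, so that none of the three claims picks up additional oscillation terms and the standard AFEM proofs apply essentially verbatim.
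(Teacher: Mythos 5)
Your proof is correct. The paper states this result without proof, deferring to~\cite{ms2009}, so there is no paper-internal argument to compare against; what you give is precisely the standard AFEM reduction: the pivotal observation that $\vec{f},\vec{g}$ are $\TT_0$-piecewise constant (hence piecewise constant on every $\TT_\ell \in \T$, contributing only edge jumps on the fixed skeleton and never producing oscillation) lets the stability/reduction argument via reverse triangle and inverse inequalities, the Babu\v{s}ka--Vogelius \textsl{a~priori} convergence via nestedness and C\'ea, and the goal error bound via Galerkin orthogonality and Scott--Zhang reliability go through exactly as for the pure Poisson case in Propositions~\ref{prop:poisson-rel}--\ref{prop:poisson-reduction}, which is how~\cite{ms2009} argues as well.
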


We solve this problem numerically by using Algorithm~\eqref{alg:goafem} with different (non-standard) marking strategies:
\begin{enumerate}[label={(\roman*)}]
	\item The general strategy from \cite{ms2009} (see Remark~\ref{rem:mark_a_stronger}) with the maximum marking criterion~\eqref{eq:maximum-marking} as considered in~\cite{mvy2020}.
	
	\item The general strategy from \cite{bet2011} (see Remark~\ref{rem:mark_a_stronger}) with the equidistribution marking criterion~\eqref{eq:equidistribution-marking}.
	
	\item Marking Strategy~\ref{alg:mark_b} with $W(x,y) := \max \{ \sin(\pi x/2), 2^y-1 \}$.
	
	\item Marking Strategy~\ref{alg:mark_b} with $W(x,y) := (x^q + y^q)^{1/q} / 2^{1/q}$ for $q = 10$.
\end{enumerate}
It is easy to see that both functions $W$ from~{\rm (iii)--(iv)} satisfy~\eqref{eq:C-assumptions}.
Thus, the following result follows from our main theorems in Section~\ref{sec:results}.

\begin{corollary}
	Let $(\TT_\ell)_{\ell \in \N_0} \subset \T$ be the output of Algorithm~\ref{alg:goafem} driven by the error estimators $\eta_\coarse$ and $\zeta_\coarse$ from~\eqref{eq:model-ms-estimators} and any of the marking criteria~{\rm (i)--(iv)}.
	Then, there holds convergence
	\begin{equation*}
		|G(u) - G(u_\ell)|
		\to 0
		\quad \text{as }
		\ell \to \infty.
	\end{equation*}
\end{corollary}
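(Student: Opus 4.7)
The plan is to reduce the corollary to plain convergence $\eta_\ell \zeta_\ell \to 0$ by means of the goal error estimate~\eqref{eq:goal-error-estimate} (which, by the cited theorem, holds with $G_\ell := G(u_\ell)$), and then to invoke Theorem~\ref{thm:goafem_a} or Theorem~\ref{thm:goafem_b}. Stability~\eqref{assumption:stab}, reduction~\eqref{assumption:red} and \textsl{a~priori} convergence~\eqref{eq:a_priori_konv} are supplied by the same cited theorem, while newest vertex bisection satisfies~\eqref{assumption:refine1}--\eqref{assumption:refine2}. Hence the remaining task is only to check that each marking rule (i)--(iv) fits into Marking Strategy~\ref{alg:mark_a} or~\ref{alg:mark_b}.

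For (i) and (ii), I would invoke Remark~\ref{rem:mark_a_stronger}: both the maximum rule~\eqref{eq:maximum-marking} and the equidistribution rule~\eqref{eq:equidistribution-marking} are instances of the general MSV criterion~\eqref{eq:msv-criterion} with $M(t) := t$, which is trivially continuous at $0$ with $M(0) = 0$ and quasi-monotone. Consequently, (i) corresponds to the ``first mark, then combine'' variant and (ii) corresponds to the ``first combine, then mark'' variant discussed in Remark~\ref{rem:mark_a_stronger}, both of which imply Marking Strategy~\ref{alg:mark_a}. Theorem~\ref{thm:goafem_a} then yields $\eta_\ell \zeta_\ell \to 0$.

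For (iii) and (iv), I would verify the two conditions in~\eqref{eq:C-assumptions} for the respective weight $W$. For (iii), one has $W(1,1) = \max\{\sin(\pi/2),\, 2^1 - 1\} = 1$, and for $x,y \in [0,1]$ the elementary bounds $\sin(\pi x/2) \leq (\pi/2)\, x$ and $2^y - 1 \leq y$ (the latter from convexity of $t \mapsto 2^t - 1 - t$ on $[0,1]$, which vanishes at both endpoints) imply $W(x,y) \leq (\pi/2) \max\{x,y\}$, so $C_W := \pi/2$ works. For (iv), $W(1,1) = 2^{1/q}/2^{1/q} = 1$, and $(x^q + y^q)^{1/q} \leq 2^{1/q} \max\{x,y\}$ gives $W(x,y) \leq \max\{x,y\}$, so $C_W := 1$ works. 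Thus Marking Strategy~\ref{alg:mark_b} is in force in both cases, and Theorem~\ref{thm:goafem_b} produces $\eta_\ell \zeta_\ell \to 0$. Combining with~\eqref{eq:goal-error-estimate} then gives $|G(u) - G(u_\ell)| \leq \Cgoal\, \eta_\ell\zeta_\ell \to 0$ in all four cases. I do not anticipate any real obstacle: the proof is essentially a checklist application of the main theorems, the only mildly non-trivial step being the elementary verification of~\eqref{eq:C-assumptions} for the weights $W$ in (iii) and (iv).
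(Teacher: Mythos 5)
Your proposal is correct and follows exactly the route the paper intends: reduce the corollary to $\eta_\ell\,\zeta_\ell \to 0$ via the goal error estimate~\eqref{eq:goal-error-estimate}, then verify that (i)--(ii) fall under Marking Strategy~\ref{alg:mark_a} (via Remark~\ref{rem:mark_a_stronger} with $M(t)=t$) and that (iii)--(iv) fall under Marking Strategy~\ref{alg:mark_b} by checking~\eqref{eq:C-assumptions}, so that Theorems~\ref{thm:goafem_a}--\ref{thm:goafem_b} apply. The paper merely asserts that ``it is easy to see'' that the weights $W$ in (iii)--(iv) satisfy~\eqref{eq:C-assumptions}; your elementary bounds $\sin(\pi x/2) \le (\pi/2)x$, $2^y - 1 \le y$, and $(x^q+y^q)^{1/q} \le 2^{1/q}\max\{x,y\}$ are the correct way to fill in that gap.
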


We plot the goal error estimator $\eta_\ell \, \zeta_\ell$ over $\sum_{k=0}^{\ell} \dim \XX_k$ in Figure~\ref{fig:ms-example}.
Provided that every step in Algorithm~\ref{alg:goafem} can be done in linear time with respect to the number of degrees of freedom $\dim \XX_\ell$, the quantity $\sum_{k=0}^{\ell} \dim \XX_k$ is the cumulative cost necessary to compute $\eta_\ell \, \zeta_\ell$.
We see that the summation-based marking strategies \textrm{(ii)} and \textrm{(iv)} do slightly better in this metric than the maximum-based marking strategies \textrm{(i)} and \textrm{(iii)}.

\begin{figure}
	\centering
	\resizebox{0.7\textwidth}{!}{\tikzstyle{plot}=[thick,mark options={scale=0.8}]%
\tikzstyle{reference}=[plot,dashed]%
\pgfplotstableread[col sep = comma]{plots/data/ms-p1-markA.dat}{\msPoneMarkA}%
\pgfplotstableread[col sep = comma]{plots/data/ms-p1-markB.dat}{\msPoneMarkB}%
\pgfplotstableread[col sep = comma]{plots/data/ms-p1-markC1.dat}{\msPoneMarkC}%
\pgfplotstableread[col sep = comma]{plots/data/ms-p1-markC2.dat}{\msPoneMarkD}%
\pgfplotstableread[col sep = comma]{plots/data/ms-p2-markA.dat}{\msPtwoMarkA}%
\pgfplotstableread[col sep = comma]{plots/data/ms-p2-markB.dat}{\msPtwoMarkB}%
\pgfplotstableread[col sep = comma]{plots/data/ms-p2-markC1.dat}{\msPtwoMarkC}%
\pgfplotstableread[col sep = comma]{plots/data/ms-p2-markC2.dat}{\msPtwoMarkD}%
\pgfplotstableread[col sep = comma]{plots/data/ms-p3-markA.dat}{\msPthreeMarkA}%
\pgfplotstableread[col sep = comma]{plots/data/ms-p3-markB.dat}{\msPthreeMarkB}%
\pgfplotstableread[col sep = comma]{plots/data/ms-p3-markC1.dat}{\msPthreeMarkC}%
\pgfplotstableread[col sep = comma]{plots/data/ms-p3-markC2.dat}{\msPthreeMarkD}%

\begin{tikzpicture}

\begin{loglogaxis}[%
	xlabel={$\sum_{k=0}^{\ell} \dim \XX_k$},%
	ylabel={$\eta_\ell \, \zeta_\ell$},%
	every axis plot/.append style={forget plot},%
	every tick label/.append style={font=\tiny},%
	legend pos=south west]

	% p = 1
	\addplot[reference] table [x={cumulativeDofs}, y expr={5e-1/\thisrowno{1}}] {\msPoneMarkC} node at (axis description cs:0.9,0.5) {\tiny{$\alpha = -1$}};
	
	\addplot[plot,mark=x,color1] table [x={cumulativeDofs}, y={estimator}] {\msPoneMarkA};
	\addplot[plot,mark=x,color2] table [x={cumulativeDofs}, y={estimator}] {\msPoneMarkB};
	\addplot[plot,mark=x,color3] table [x={cumulativeDofs}, y={estimator}] {\msPoneMarkC};
	\addplot[plot,mark=x,color4] table [x={cumulativeDofs}, y={estimator}] {\msPoneMarkD};
	
	% p = 2
	\addplot[reference] table [x={cumulativeDofs}, y expr={1e2/(\thisrowno{1}*\thisrowno{1})}] {\msPtwoMarkC} node at (axis description cs:0.9,0.27) {\tiny{$\alpha = -2$}};
	
	\addplot[plot,mark=o,color1] table [x={cumulativeDofs}, y={estimator}] {\msPtwoMarkA};
	\addplot[plot,mark=o,color2] table [x={cumulativeDofs}, y={estimator}] {\msPtwoMarkB};
	\addplot[plot,mark=o,color3] table [x={cumulativeDofs}, y={estimator}] {\msPtwoMarkC};
	\addplot[plot,mark=o,color4] table [x={cumulativeDofs}, y={estimator}] {\msPtwoMarkD};
	
	% p = 3
	\addplot[reference] table [x={cumulativeDofs}, y expr={1e5/(\thisrowno{1}*\thisrowno{1}*\thisrowno{1})}] {\msPthreeMarkB} node at (axis description cs:0.9,0.05) {\tiny{$\alpha = -3$}};
	
	\addplot[plot,mark=+,color1] table [x={cumulativeDofs}, y={estimator}] {\msPthreeMarkA};
	\addplot[plot,mark=+,color2] table [x={cumulativeDofs}, y={estimator}] {\msPthreeMarkB};
	\addplot[plot,mark=+,color3] table [x={cumulativeDofs}, y={estimator}] {\msPthreeMarkC};
	\addplot[plot,mark=+,color4] table [x={cumulativeDofs}, y={estimator}] {\msPthreeMarkD};
	
	% manual legend
	\addlegendimage{plot,no markers,color1}
	\addlegendimage{plot,no markers,color2}
	\addlegendimage{plot,no markers,color3}
	\addlegendimage{plot,no markers,color4}
	\addlegendimage{plot,only marks,mark=x}
	\addlegendimage{plot,only marks,mark=o}
	\addlegendimage{plot,only marks,mark=+}
	
	\addlegendentry{\rm (i)}
	\addlegendentry{\rm (ii)}
	\addlegendentry{\rm (iii)}
	\addlegendentry{\rm (iv)}
	\addlegendentry{$p=1$}
	\addlegendentry{$p=2$}
	\addlegendentry{$p=3$}
\end{loglogaxis}

\end{tikzpicture}}
	\caption{Goal error estimator $\eta_\ell \, \zeta_\ell$ over cumulative computational cost for the problem from Section~\ref{subsec:ms-example}.
	The colors correspond to the marking strategies~{\rm (i)--(iv)}, the markers correspond to different polynomial degrees of the FEM ansatz space $\XX_\ell$.}
	\label{fig:ms-example}
\end{figure}
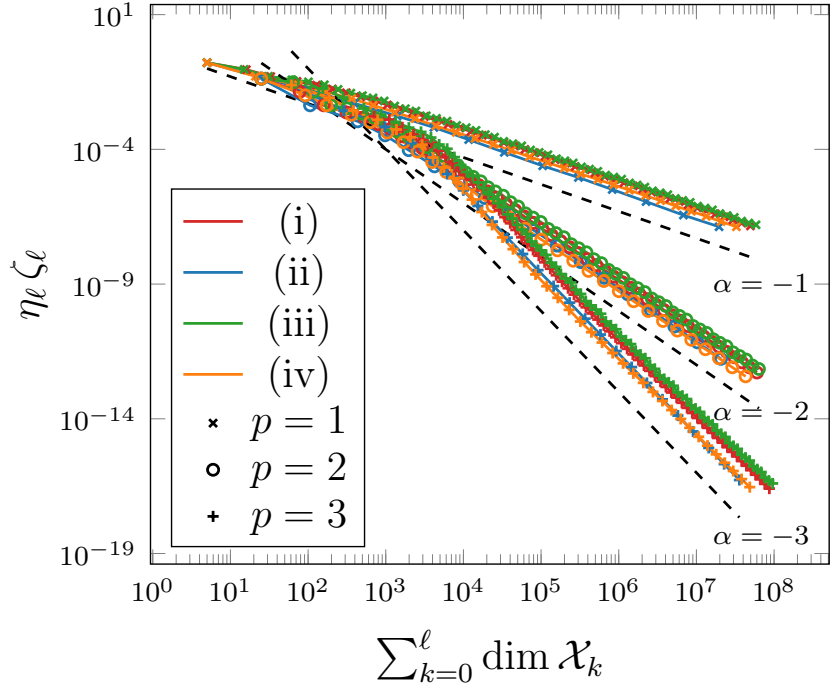

%%%%%%%%%%%%%%%%%%%%%%%%%%%%%%%%%%%%%%%%%%%%%%%%%%%%%%%%%%%%%%%%%%%%%%%%%%%%%%%%%%%
\subsection{Nonlinear goal functional}\label{subsec:bip-example}
%%%%%%%%%%%%%%%%%%%%%%%%%%%%%%%%%%%%%%%%%%%%%%%%%%%%%%%%%%%%%%%%%%%%%%%%%%%%%%%%%%%

We consider the L-shape domain $\Omega = (-1,1)^2 \backslash [-1,0]^2$ and the solution $u \in \XX := H^1_0(\Omega)$ of the Poisson equation
\begin{equation}
\label{eq:model-quadratic}
	a(u,v)
	:=
	\int_\Omega \nabla u \cdot \nabla v \d{x}
	=
	\int_\Omega f v \d{x}
	=:
	F(v)
	\quad
	\text{for all  } v \in \XX.
\end{equation}
We are interested in the quadratic goal value around $y = (0.5,0.5)$,
\begin{equation*}
\label{eq:goal-quadratic}
	G(u)
	:=
	\frac{1}{2} \int_\Omega \lambda_{y}(x) |\nabla u|^2 \d{x}
	\quad
	\text{with } \lambda_y(x) := \big[ 10^{-2} + \norm{x-y}^2 \big]^{-1},
\end{equation*}
where the linearized dual problem reads~\cite{bip2021}: Find $z \in \XX$ such that
\begin{equation}
\label{eq:model-quadratic-dual}
	a(v,z)
	=
	\int_\Omega \lambda_{y}(x) \nabla u \cdot \nabla v \d{x}
	=:
	b(u,v)
	\quad
	\text{for all  } v \in \XX.
\end{equation}
With $\XX_\coarse := \SS^p_0(\TT_\coarse)$, the discrete approximations of~\eqref{eq:model-quadratic}--\eqref{eq:model-quadratic-dual} then read
\begin{equation}
\label{eq:model-quadratic-discrete}
	a(u_\coarse,v_\coarse)
	=
	F(v_\coarse)
	\quad \text{and} \quad
	a(v_\coarse,z_\coarse)
	=
	b(u_\coarse, v_\coarse)
	\qquad
	\text{for all  } v_\coarse \in \XX_\coarse.
\end{equation}
The residual \textsl{a~posteriori} error estimators read
\begin{equation}
\label{eq:model-quadratic-estimators}
\begin{split}
	\mu_\coarse(T)^2
	&:=
	h_T^2 \, \norm{\Delta u_\coarse + f}_{L^2(T)}^2 + h_T \, \norm{\jump{\nabla u_\coarse \cdot \normalvec}}_{L^2(\partial T \cap \Omega)}^2,\\
	\nu_\coarse(T)^2
	&:=
	h_T^2 \, \norm{\Delta z_\coarse + \lambda_y(x) \Delta u_\coarse + \nabla \lambda_y(x) \cdot \nabla u_\coarse}_{L^2(T)}^2\\
	& \quad + h_T \, \norm{\jump{(\nabla z_\coarse + \lambda_y(x) \nabla u_\coarse) \cdot \normalvec}}_{L^2(\partial T \cap \Omega)}^2.
\end{split}
\end{equation}
The work~\cite{bip2021} shows that there holds the goal error estimate
\begin{equation*}
	|G(u) - G(u_\coarse)|
	\lesssim
	\mu_\coarse \, \big[ \mu_\coarse^2 + \nu_\coarse^2 \big]^{1/2}
	\leq
	\mu_\coarse^2 + \nu_\coarse^2.
\end{equation*}

\begin{theorem}[\cite{bip2021}]
\label{th:axioms-quadratic}
	With $\mu_\coarse$ and $\nu_\coarse$ from~\eqref{eq:model-quadratic-estimators}, both choices of error estimators
	\begin{itemize}
		\item $\eta_\coarse := \mu_\coarse$ and $\zeta_\coarse := \big[ \mu_\coarse^2 + \nu_\coarse^2 \big]^{1/2}$ as well as
		
		\item $\eta_\coarse := \zeta_\coarse := \big[ \mu_\coarse^2 + \nu_\coarse^2 \big]^{1/2}$
	\end{itemize}
	satisfy stability~\eqref{assumption:stab}, reduction~\eqref{assumption:red}, and the goal error estimate~\eqref{eq:goal-error-estimate} with $G_\coarse := G(u_\coarse)$.
	Furthermore, there holds \textsl{a~priori} convergence~\eqref{eq:a_priori_konv}.
	\hfill $\square$
\end{theorem}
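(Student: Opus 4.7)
The plan is to decouple the theorem into four independent claims and verify each from properties of the constituent residual estimators $\mu_\coarse$ and $\nu_\coarse$, using the bound from \cite{bip2021} as a black box where needed.

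For stability~\eqref{assumption:stab} and reduction~\eqref{assumption:red}, I would first observe that $\mu_\coarse$ is the standard residual estimator for the Poisson problem~\eqref{eq:model-quadratic} and hence satisfies~\eqref{assumption:stab}--\eqref{assumption:red} by Propositions~\ref{prop:poisson-stab}--\ref{prop:poisson-reduction} with $S_\mu(r,t) := C_1 r$. The dual estimator $\nu_\coarse$ is also a residual-type estimator, but its volume and jump residuals contain the coupling term $\lambda_y(x) \nabla u_\coarse$. Nevertheless, since $\lambda_y \in W^{1,\infty}(\Omega)$, the standard inverse-estimate and trace arguments used to establish Propositions~\ref{prop:poisson-stab}--\ref{prop:poisson-reduction} go through verbatim after splitting the residual difference via the triangle inequality, yielding stability and reduction for $\nu_\coarse$ with perturbation $S_\nu(r,t) := C_2 \max\{r,t\}$. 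For the combined estimator $\chi_\coarse := (\mu_\coarse^2 + \nu_\coarse^2)^{1/2}$, both properties transfer by pure algebra: on a subset $\UU_\coarse \subseteq \TT_\fine \cap \TT_\coarse$ one writes
\begin{equation*}
    \chi_\fine(\UU_\coarse)^2
    = \mu_\fine(\UU_\coarse)^2 + \nu_\fine(\UU_\coarse)^2
\end{equation*}
and applies~\eqref{assumption:stab} to each summand followed by $\sqrt{(a+b)^2 + (c+d)^2} \le \sqrt{a^2+c^2} + \sqrt{b^2+d^2}$; analogously for~\eqref{assumption:red}. This yields~\eqref{assumption:stab}--\eqref{assumption:red} for $\chi_\coarse$ with $S(r,t) := C_3 \max\{r,t\}$, which covers both the choice $\zeta_\coarse = \chi_\coarse$ and (trivially) $\eta_\coarse = \chi_\coarse$.

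For the goal error estimate~\eqref{eq:goal-error-estimate}, I would quote the \textsl{a~posteriori} bound $|G(u) - G(u_\coarse)| \lesssim \mu_\coarse \, (\mu_\coarse^2 + \nu_\coarse^2)^{1/2}$ from~\cite{bip2021}. For the first choice, this is exactly $\eta_\coarse \zeta_\coarse$. For the second choice, estimate $\mu_\coarse \leq (\mu_\coarse^2 + \nu_\coarse^2)^{1/2} = \eta_\coarse = \zeta_\coarse$, so the bound becomes $\eta_\coarse \zeta_\coarse$ up to a constant, establishing~\eqref{eq:goal-error-estimate} in both cases with $G_\coarse := G(u_\coarse)$.

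A priori convergence~\eqref{eq:a_priori_konv} is where the main obstacle lies, because the dual problem~\eqref{eq:model-quadratic-dual} has a right-hand side that depends on $u_\coarse$. For the primal sequence, nestedness of $\XX_\ell$ together with C\'ea quasi-optimality yields $u_\ell \to u_\infty$ in $\XX$, where $u_\infty \in \XX_\infty := \overline{\bigcup_\ell \XX_\ell}$ is the Galerkin projection of $u$. For the dual sequence, define $z_\infty \in \XX_\infty$ as the unique solution of $a(v_\infty, z_\infty) = b(u_\infty, v_\infty)$ for all $v_\infty \in \XX_\infty$; this exists by the Lax--Milgram lemma applied to the closed subspace $\XX_\infty$. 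Testing the difference of $z_\infty$ and the discrete $z_\ell$ with $v_\ell \in \XX_\ell$ gives
\begin{equation*}
    a(v_\ell, z_\infty - z_\ell)
    = b(u_\infty - u_\ell, v_\ell)
    \le \|\lambda_y\|_{L^\infty(\Omega)} \, \|\nabla(u_\infty - u_\ell)\|_{L^2(\Omega)} \, \|\nabla v_\ell\|_{L^2(\Omega)},
\end{equation*}
and the usual Galerkin-type argument (choose $v_\ell$ as the best approximation to $z_\infty - z_\ell$ in $\XX_\ell$ and exploit density of $\bigcup_\ell \XX_\ell$ in $\XX_\infty$) yields $\|z_\ell - z_\infty\|_\XX \to 0$. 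The delicate point is that $u_\infty$ need not equal $u$, so $z_\infty$ is defined with respect to the Galerkin limit rather than the exact primal solution; this is, however, exactly the flexibility granted by~\eqref{eq:a_priori_konv}, which does not require the limits to coincide with $u, z$.
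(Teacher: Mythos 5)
The paper imports Theorem~\ref{th:axioms-quadratic} from \cite{bip2021} and does not reprove it --- the terminal $\square$ signals that the argument lives in the cited reference --- so there is no in-paper proof to compare against. Your reconstruction follows the route one would expect: residual-estimator stability and reduction for $\mu_\coarse$ and $\nu_\coarse$, an algebraic lifting to $\chi_\coarse := (\mu_\coarse^2 + \nu_\coarse^2)^{1/2}$ via the Euclidean triangle inequality, the quoted \textsl{a~posteriori} bound, and a perturbed Galerkin argument for the dual \textsl{a~priori} limit defined with respect to $u_\infty$ rather than $u$.

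Two places compress more work than the phrasing suggests. First, stability and reduction for $\nu_\coarse$ are \emph{not} a verbatim repeat of the Poisson case: the residuals in~\eqref{eq:model-quadratic-estimators} depend on $u_\coarse$ through $\lambda_y\Delta u_\coarse$, $\nabla\lambda_y\cdot\nabla u_\coarse$, and the jump $\jump{\lambda_y\nabla u_\coarse\cdot\normalvec}$, so the inverse and trace inequalities produce a contribution controlled by $\|\nabla(u_\fine - u_\coarse)\|_{L^2}$ in addition to the usual $z$-term --- this is precisely why $S_\nu$ must genuinely depend on both arguments, and it is the nontrivial part of the estimator axioms in~\cite{bip2021}. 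Second, in the dual \textsl{a~priori} argument the clean route introduces the intermediate Galerkin solution $\tilde z_\ell\in\XX_\ell$ of $a(v_\ell,\tilde z_\ell)=b(u_\infty,v_\ell)$: classical \textsl{a~priori} convergence (fixed right-hand side) gives $\tilde z_\ell\to z_\infty$, while testing $a(v_\ell,\tilde z_\ell-z_\ell)=b(u_\infty-u_\ell,v_\ell)$ with $v_\ell=\tilde z_\ell-z_\ell$ yields $\|\tilde z_\ell-z_\ell\|_\XX\lesssim\|u_\infty-u_\ell\|_\XX$. Your description of ``choosing $v_\ell$ as the best approximation to $z_\infty-z_\ell$'' is opaque as written --- since $z_\ell\in\XX_\ell$ already, this reduces to the best approximation to $z_\infty$ shifted by $z_\ell$ --- but after that clarification it amounts to the same splitting. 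With these two steps fleshed out, your proposal is a sound reconstruction of the proof deferred to~\cite{bip2021}.
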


This allows to conclude convergence of both proposed schemes.

\begin{corollary}
	Let $(\TT_\ell)_{\ell \in \N_0} \subset \T$ be the output of Algorithm~\ref{alg:goafem} driven by any of the error estimators $\eta_\coarse$ and $\zeta_\coarse$ from Theorem~\ref{th:axioms-quadratic} and any of the Marking Strategies~\ref{alg:mark_a}--\ref{alg:mark_b}.
	Then, there holds convergence
	\begin{equation*}
		|G(u) - G(u_\ell)|
		\to 0
		\quad \text{as }
		\ell \to \infty.
	\end{equation*}
\end{corollary}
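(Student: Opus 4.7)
The plan is to chain together the structural properties guaranteed by Theorem~\ref{th:axioms-quadratic} with the abstract plain-convergence result Theorem~\ref{thm:goafem_a} (Setting II), which already covers both marking strategies. There is essentially no separate work to do: the corollary is a packaging statement.

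More concretely, I would proceed as follows. First, I fix either of the two choices of estimator pairs $(\eta_\coarse,\zeta_\coarse)$ listed in Theorem~\ref{th:axioms-quadratic} and either of Marking Strategies~\ref{alg:mark_a} or~\ref{alg:mark_b}. Since the refinement rule employed in Section~\ref{sec:examples} is newest vertex bisection, it satisfies the general assumptions~\eqref{assumption:refine1}--\eqref{assumption:refine2} required by Theorem~\ref{thm:goafem_a} (and a fortiori by Theorem~\ref{thm:goafem_b}, which needs only~\eqref{assumption:refine1}). Theorem~\ref{th:axioms-quadratic} then supplies exactly the three ingredients Theorem~\ref{thm:goafem_a} asks for, namely stability~\eqref{assumption:stab}, reduction~\eqref{assumption:red}, and \textsl{a priori} convergence~\eqref{eq:a_priori_konv} of the discrete primal and (linearized) dual sequences.

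Applying Theorem~\ref{thm:goafem_a} therefore yields the key intermediate conclusion
\begin{equation*}
	\eta_\ell \, \zeta_\ell \to 0 \qquad \text{as } \ell \to \infty.
\end{equation*}
Theorem~\ref{th:axioms-quadratic} additionally states that the goal error estimate~\eqref{eq:goal-error-estimate} holds with $G_\coarse := G(u_\coarse)$, so that
\begin{equation*}
	|G(u) - G(u_\ell)| \leq \Cgoal \, \eta_\ell \, \zeta_\ell \to 0 \qquad \text{as } \ell \to \infty,
\end{equation*}
which is the claim.

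There is no genuine obstacle here: all the difficulty has been absorbed into the preceding results. The only minor point worth mentioning is that the statement allows \emph{either} marking strategy, so one should note explicitly that Marking Strategy~\ref{alg:mark_b} is covered by Theorem~\ref{thm:goafem_a} via Remark~\ref{rem:marking-strategies}\,(i), where it is shown that Marking Strategy~\ref{alg:mark_b} implies Marking Strategy~\ref{alg:mark_a}; alternatively one can invoke Theorem~\ref{thm:goafem_b} directly for that strategy. Either reference suffices to dispatch the case distinction in one line.
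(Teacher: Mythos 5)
Your proof is correct and is exactly the packaging argument the paper intends: Theorem~\ref{th:axioms-quadratic} supplies stability~\eqref{assumption:stab}, reduction~\eqref{assumption:red}, \textsl{a~priori} convergence~\eqref{eq:a_priori_konv}, and the goal error estimate~\eqref{eq:goal-error-estimate}, so Theorem~\ref{thm:goafem_a} (which already covers both Marking Strategies~\ref{alg:mark_a} and~\ref{alg:mark_b} by its very statement, via Remark~\ref{rem:marking-strategies}\,(i)) gives $\eta_\ell\,\zeta_\ell \to 0$, and the goal error estimate converts this into $|G(u)-G(u_\ell)|\to 0$. Your one small redundancy is flagging the two-strategy case distinction as if it required extra care, when Theorem~\ref{thm:goafem_a} already absorbs it; otherwise this matches the paper's (implicit) proof.
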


For problem~\eqref{eq:model-quadratic} with $f = 1$, numerical results were obtained by Algorithm~\ref{alg:goafem} with Marking Strategy~\ref{alg:mark_b} and $W(x,y) = (x+y)/2$ (the strategy presented in~\cite{bet2011}) for both choices of $\eta_\ell$ and $\zeta_\ell$; see Figure~\ref{fig:bip-example}.
Examples of meshes after some steps of the adaptive algorithm with different polynomial degrees are given in Figure~\ref{fig:meshes}.
For higher polynomial order, the refinement towards (re-entrant) corners is much stronger than for the lowest order case $p=1$.

\begin{figure}
	\centering
	\resizebox{0.45\textwidth}{!}{\tikzstyle{plot}=[thick]%
\tikzstyle{reference}=[plot,dashed]%
\pgfplotstableread[col sep = comma]{plots/data/bip-p1-combineSum.dat}{\bipPoneSum}%
\pgfplotstableread[col sep = comma]{plots/data/bip-p2-combineSum.dat}{\bipPtwoSum}%
\pgfplotstableread[col sep = comma]{plots/data/bip-p3-combineSum.dat}{\bipPthreeSum}%
\begin{tikzpicture}

\begin{loglogaxis}[%
	xlabel={$\sum_{k=0}^{\ell} \dim \XX_k$},%
	ylabel={$\eta_\ell \, \zeta_\ell = \mu_\ell^2 + \nu_\ell^2$},%
	every tick label/.append style={font=\tiny},%
	legend pos=south west]

	\addplot[reference, forget plot] table [x={cumulativeDofs}, y expr={3e3/\thisrowno{1}}] {\bipPoneSum} node[rotate=-17] at (axis description cs:0.5,0.81) {\tiny{$\alpha = -1$}};
	\addplot[reference, forget plot] table [x={cumulativeDofs}, y expr={1e6/(\thisrowno{1}*\thisrowno{1}*\thisrowno{1})}] {\bipPoneSum} node[rotate=-42] at (axis description cs:0.4,0.55) {\tiny{$\alpha = -3$}};
	
	\addplot[plot,mark=x,color1] table [x={cumulativeDofs}, y={estimator}] {\bipPoneSum};
	\addplot[plot,mark=o,color2] table [x={cumulativeDofs}, y={estimator}] {\bipPtwoSum};	
	\addplot[plot,mark=diamond,color3] table [x={cumulativeDofs}, y={estimator}] {\bipPthreeSum};
	
	% legend
	\addlegendentry{$p=1$}
	\addlegendentry{$p=2$}
	\addlegendentry{$p=3$}
\end{loglogaxis}

\end{tikzpicture}}%
	\quad%
	\resizebox{0.45\textwidth}{!}{\tikzstyle{plot}=[thick]%
\tikzstyle{reference}=[plot,dashed]%
\pgfplotstableread[col sep = comma]{plots/data/bip-p1-combineProduct.dat}{\bipPoneSum}%
\pgfplotstableread[col sep = comma]{plots/data/bip-p2-combineProduct.dat}{\bipPtwoSum}%
\pgfplotstableread[col sep = comma]{plots/data/bip-p3-combineProduct.dat}{\bipPthreeSum}%
\begin{tikzpicture}

\begin{loglogaxis}[%
	xlabel={$\sum_{k=0}^{\ell} \dim \XX_k$},%
	ylabel={$\eta_\ell \, \zeta_\ell = \mu_\ell \, [\mu_\ell^2 + \nu_\ell^2]^{1/2}$},%
	every tick label/.append style={font=\tiny},%
	legend pos=south west]

	\addplot[reference, forget plot] table [x={cumulativeDofs}, y expr={3e3/\thisrowno{1}}] {\bipPoneSum} node[rotate=-15] at (axis description cs:0.5,0.8) {\tiny{$\alpha = -1$}};
	
	\addplot[reference, forget plot] table [x={cumulativeDofs}, y expr={1e6/(\thisrowno{1}*\thisrowno{1}*\thisrowno{1})}] {\bipPoneSum} node[rotate=-43] at (axis description cs:0.39,0.55) {\tiny{$\alpha = -3$}};
	
	\addplot[plot,mark=x,color1] table [x={cumulativeDofs}, y={estimator}] {\bipPoneSum};
	\addplot[plot,mark=o,color2] table [x={cumulativeDofs}, y={estimator}] {\bipPtwoSum};	
	\addplot[plot,mark=diamond,color3] table [x={cumulativeDofs}, y={estimator}] {\bipPthreeSum};
	
	% legend
	\addlegendentry{$p=1$}
	\addlegendentry{$p=2$}
	\addlegendentry{$p=3$}
\end{loglogaxis}

\end{tikzpicture}}
	\caption{Goal error estimator $\eta_\ell \, \zeta_\ell$ over cumulative computational cost for the problem from Section~\ref{subsec:bip-example} with different polynomial degrees of the FEM ansatz space $\XX_\ell$.
	Left: Goal error bound with sum structure.
	Right: Sharper goal error bound with product structure.}
	\label{fig:bip-example}
\end{figure}
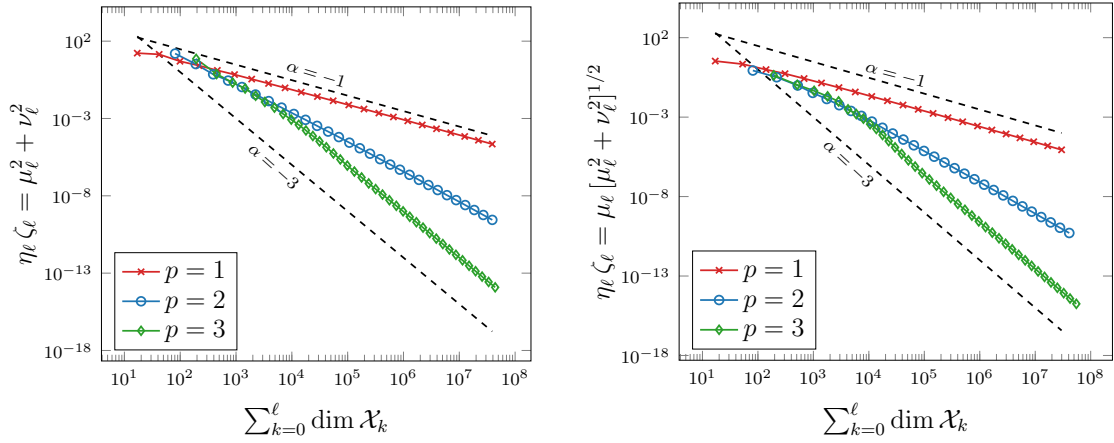

\begin{figure}
	\centering
	\includegraphics[width=0.3\textwidth]{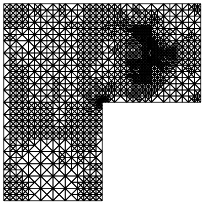}%
	\hspace{0.1\textwidth}%
	\includegraphics[width=0.3\textwidth]{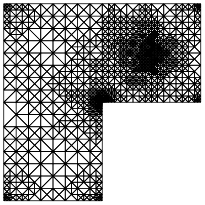}
	\caption{Meshes after several steps of the adaptive algorithm driven by $\eta_\ell \, \zeta_\ell = \mu_\ell \, [\mu_\ell^2 + \nu_\ell^2]^{1/2}$ for the problem from Section~\ref{subsec:bip-example}.
	The weight of the goal functional is centered at $(0.5, 0.5)$.
	Left: Computation with $p=1$, $\#\TT_6 = 3093$.
	Right: Computation with $p=3$, $\#\TT_{14} = 2821$.}
	\label{fig:meshes}
\end{figure}

%%%%%%%%%%%%%%%%%%%%%%%%%%%%%%%%%%%%%%%%%%%%%%%%%%%%%%%%%%%%%%%%%%%%%%%%%%%%%%%%%%%
\subsection{Semilinear problem}
%%%%%%%%%%%%%%%%%%%%%%%%%%%%%%%%%%%%%%%%%%%%%%%%%%%%%%%%%%%%%%%%%%%%%%%%%%%%%%%%%%%

The works~\cite{hpz15,bbi+2022} consider $\Omega \subset \R^d$ with
\begin{equation}
\label{eq:model-hpz}
	a(u,v) 
	:=
	\int_\Omega A \nabla u \cdot \nabla v + b(u) v \d{x}
	= \int_\Omega f v \d{x}
	=:
	F(v)
	\quad
	\text{for all  } v \in \XX,
\end{equation}
where $A \in [L^\infty(\Omega)]^{d \times d}$ is a uniformly positive definite matrix, $b \colon \R \to \R$ is a smooth and monotone function, i.e., $b'(s) \geq 0$ for all $s \in \R$, and $f \in L^2(\Omega)$.
The dual solution $z \in H_0^1(\Omega)$ is given by
\begin{equation}
\label{eq:model-hpz-dual}
	a'(u; v, z)
	:=
	\int_\Omega A \nabla v \cdot \nabla z + b'(u)vz \d{x}
	=
	\int_\Omega g v \d{x}
	=:
	G(v),
	\quad
	\text{for all } v \in \XX,
\end{equation}
where $g \in L^2(\Omega)$, and its discretization $z_\coarse \in \XX_\coarse$ satisfies
\begin{equation*}
	a'(u_\coarse; v_\coarse, z_\coarse) = G(v_\coarse)
	\quad
	\text{for all } v_\coarse \in \XX_\coarse.
\end{equation*}
The residual error estimators in this case read
\begin{equation}
\label{eq:model-semilinear-estimators}
\begin{split}
	\mu_\coarse(T)^2
	&:=
	h_T^2 \, \norm{\div(A \nabla u_\coarse) - b(u_\coarse) + f}_{L^2(T)}^2
	+ h_T \, \norm{\jump{A \nabla u_\coarse \cdot \normalvec}}_{L^2(\partial T \cap \Omega)}^2,\\
	\nu_\coarse(T)^2
	&:=
	h_T^2 \, \norm{\div(A \nabla z_\coarse) - b'(u_\coarse) z_\coarse + g}_{L^2(T)}^2
	+ h_T \, \norm{\jump{A \nabla z_\coarse \cdot \normalvec}}_{L^2(\partial T \cap \Omega)}^2.
\end{split}
\end{equation}
For these estimators, the work~\cite{bbi+2022} proves an analogue to Theorem~\ref{th:axioms-quadratic}.
Thus, our framework allows to conclude convergence of the goal error also in this case.
\begin{corollary}
	Let $(\TT_\ell)_{\ell \in \N_0} \subset \T$ be the output of Algorithm~\ref{alg:goafem} driven by any of the error estimators $\eta_\coarse$ and $\zeta_\coarse$ from Theorem~\ref{th:axioms-quadratic} with $\mu_\coarse$ and $\nu_\coarse$ from~\eqref{eq:model-semilinear-estimators}, and any of the Marking Strategies~\ref{alg:mark_a}--\ref{alg:mark_b}.
	Then, there holds convergence
	\begin{equation}
	\label{eq:goal-plain-convergence}
		|G(u) - G(u_\ell)|
		\to 0
		\quad \text{as }
		\ell \to \infty.
	\end{equation}
\end{corollary}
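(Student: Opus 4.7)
The plan is to reduce this corollary to the abstract convergence results of Setting~II, precisely as was done for the quadratic-goal example in Section~\ref{subsec:bip-example}. The semilinear analogue of Theorem~\ref{th:axioms-quadratic}, established in~\cite{bbi+2022}, is the key input: it asserts that the residual indicators $\mu_\coarse$ and $\nu_\coarse$ from~\eqref{eq:model-semilinear-estimators} satisfy stability~\eqref{assumption:stab} and reduction~\eqref{assumption:red}, that the combined choices $\eta_\coarse := \mu_\coarse$, $\zeta_\coarse := [\mu_\coarse^2+\nu_\coarse^2]^{1/2}$ (or $\eta_\coarse := \zeta_\coarse := [\mu_\coarse^2+\nu_\coarse^2]^{1/2}$) inherit these properties, and that the goal error estimate~\eqref{eq:goal-error-estimate} holds with $G_\coarse := G(u_\ell)$.

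Once these structural axioms are in place, I would invoke \emph{a~priori} convergence~\eqref{eq:a_priori_konv} of both the primal and the (linearized) dual discrete solutions. For the primal problem~\eqref{eq:model-hpz}, monotonicity $b'(\cdot)\ge 0$ makes $a(\cdot,\cdot)$ a strongly monotone nonlinear form, so that nestedness $\XX_\ell \subseteq \XX_{\ell+1}$ of the conforming FEM spaces together with a C\'ea-type quasi-best approximation yields convergence of $u_\ell$ to the Galerkin limit $u_\infty \in \XX_\infty := \overline{\bigcup_\ell \XX_\ell}$; for the linear dual~\eqref{eq:model-hpz-dual}, the standard argument of~\cite{bv1984} applies once $u_\ell \to u_\infty$ is known. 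The result is precisely~\eqref{eq:a_priori_konv}.

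With~\eqref{assumption:stab}--\eqref{assumption:red} and~\eqref{eq:a_priori_konv} verified, I would apply Theorem~\ref{thm:goafem_a} (for Marking Strategy~\ref{alg:mark_a}) or Theorem~\ref{thm:goafem_b} (for Marking Strategy~\ref{alg:mark_b}) to conclude $\eta_\ell \, \zeta_\ell \to 0$. The goal error estimate~\eqref{eq:goal-error-estimate} then immediately gives $|G(u) - G(u_\ell)| \le \Cgoal\, \eta_\ell\,\zeta_\ell \to 0$, which is the claim~\eqref{eq:goal-plain-convergence}.

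The main obstacle is really not in our framework but in the verification of the axioms for the semilinear problem, which has already been carried out in~\cite{bbi+2022}; in particular, the presence of the nonlinearity $b(\cdot)$ and the $u_\coarse$-dependence of the dual bilinear form $a'(u_\coarse;\cdot,\cdot)$ complicate the reduction and stability proofs as well as the derivation of the product-type goal error bound. Once those ingredients are imported, the proof of the corollary is a one-line application of our abstract machinery, and no further work beyond citing~\cite{bbi+2022} and the appropriate theorem from Section~\ref{sec:results} is needed.
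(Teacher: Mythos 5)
Your proposal follows the paper's own approach precisely: import stability~\eqref{assumption:stab}, reduction~\eqref{assumption:red}, the goal error estimate~\eqref{eq:goal-error-estimate}, and \textsl{a~priori} convergence~\eqref{eq:a_priori_konv} from the semilinear analogue of Theorem~\ref{th:axioms-quadratic} established in~\cite{bbi+2022}, then apply Theorem~\ref{thm:goafem_a} or Theorem~\ref{thm:goafem_b} and conclude via~\eqref{eq:goal-error-estimate}. Your parenthetical derivation of the dual's \textsl{a~priori} convergence is slightly too quick, since the bilinear form $a'(u_\ell;\cdot,\cdot)$ varies with $\ell$ and one needs continuous dependence of $z_\ell$ on $u_\ell$ in addition to the C\'ea-type argument, but this is precisely one of the ingredients furnished by~\cite{bbi+2022} and is not load-bearing in your reduction.
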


\begin{remark}
	In the work~\cite{hpz15}, the goal error estimate~\eqref{eq:goal-error-estimate} is shown under the assumption that the primal solutions $u, u_\coarse$ satisfy $L^\infty(\Omega)$ bounds, i.e., for some constant $C \in \R$ it holds that $-C \leq u, u_\coarse \leq C < \infty$ a.e.\ in $\Omega$.
	Furthermore, Marking Strategy~\ref{alg:mark_a} with $\MM_\ell = \MM_\ell^\eta \cup \MM_\ell^\zeta$ for all $\ell \in \N_0$ is used.
	According to Remark~\ref{rem:mark_a_stronger} and Theorem~\ref{thm:goafem_a}, this strategy leads to plain convergence of the goal error~\eqref{eq:goal-plain-convergence}.
	
	However, the particular choice of marking strategy in~\cite{hpz15} (more specifically, the choice $\MM_\ell = \MM_\ell^\eta \cup \MM_\ell^\zeta$) prevents convergence with optimal algebraic rates, which can be shown for the strategy from~\cite{bbi+2022}, even without assuming any (discrete) $L^\infty(\Omega)$ bounds.
\end{remark}

%\clearpage

\renewcommand*{\bibfont}{\small}
\printbibliography

\end{document}